\renewcommand{\AA}{\mathds A}
\newcommand{\CC}{\mathds C}
\newcommand{\DD}{\mathds D}
\newcommand{\gm}{{\mathds G}_m}
\newcommand{\gmt}{{\mathds G}_{m,t}}
\newcommand{\GG}{\mathds G}
\newcommand{\LL}{\mathbb L}
\newcommand{\NN}{\mathds N}
\newcommand{\PP}{\mathds P}
\newcommand{\RR}{\mathds R}
\newcommand{\QQ}{\mathds Q}
\newcommand{\ZZ}{\mathds Z}
\renewcommand{\L}{\mathbf L}
\newcommand{\R}{\mathbf R}
\newcommand{\cA}{\mathcal A}
\newcommand{\cD}{\mathcal D}
\newcommand{\cE}{\mathcal E}
\newcommand{\cF}{\mathcal F}
\newcommand{\cH}{\mathcal H}
\newcommand{\cI}{\mathcal I}
\newcommand{\cJ}{\mathcal J}
\newcommand{\cK}{\mathcal K}
\newcommand{\cL}{\mathcal L}
\newcommand{\cM}{\mathcal M}
\newcommand{\cN}{\mathcal N}
\newcommand{\cO}{\mathcal O}
\newcommand{\cR}{\mathcal{R}}
\newcommand{\cT}{\mathcal T}
\newcommand{\cX}{\mathcal X}
\newcommand{\cQ}{\mathcal Q}
\newcommand{\IC}{\mathit{IC}}
\newcommand{\fe}{\mathfrak{e}}
\newcommand{\fp}{\mathfrak{p}}
\newcommand{\cRint}{\cR^{\operatorname{int}}}
\newcommand{\Db}{\text{D}^\text{b}}
\newcommand{\Dbc}{\text{D}_{\text{c}}^\text{b}}
\newcommand{\Dbrh}{\text{D}_{\text{rh}}^\text{b}}
\newcommand{\dd}{\partial}
\newcommand{\Firr}{F^{\operatorname{irr}}}
\newcommand{\GF}{G^{F_\bullet}}
\newcommand{\ttau}{{}^\tau\!}
\newcommand{\ttheta}{{}^\theta\!}
\newcommand{\wh}{\widehat}
\newcommand{\ucero}{\underline{0}}
\newcommand{\uuno}{\underline{1}}
\newcommand{\ulambda}{\underline{\lambda}}
\newcommand{\uy}{\underline{y}}
\newcommand{\ra}{\rightarrow}
\newcommand{\hra}{\hookrightarrow}
\newcommand{\lra}{\longrightarrow}
\newcommand{\can}{\operatorname{can}}
\newcommand{\Conv}{\operatorname{Conv}}
\newcommand{\diag}{\operatorname{diag}}
\newcommand{\FL}{\operatorname{FL}}
\newcommand{\Gr}{\operatorname{Gr}}
\newcommand{\Hom}{\operatorname{Hom}}
\newcommand{\id}{\operatorname{id}}
\newcommand{\im}{\operatorname{im}}
\newcommand{\inv}{\operatorname{inv}}
\newcommand{\MHM}{\operatorname{MHM}}
\newcommand{\IrrMHM}{\operatorname{IrrMHM}}
\newcommand{\loc}{\operatorname{loc}}
\newcommand{\Mod}{\operatorname{Mod}}
\newcommand{\MTM}{\operatorname{MTM}}
\newcommand{\Spec}{\operatorname{Spec}}
\newcommand{\whloc}{\wh{\operatorname{loc}}}
\newcommand{\Sred}{{'\!}S}
\newcommand{\Pred}{{'\!}P}
\theoremstyle{definition}
\newtheorem{defi}{Definition}[section]
\newtheorem{assump}[defi]{Assumptions}}
\theoremstyle{remark}
\newtheorem{rem}[defi]{Remark}}
\newtheorem{thm}[defi]{Theorem}
\newtheorem{prop}[defi]{Proposition}
\newtheorem{lemma}[defi]{Lemma}
\newtheorem{coro}[defi]{Corollary}
\begin{document}

\title{Irregular Hodge filtration of some confluent hypergeometric systems}
\author{Alberto Castaño Domínguez}
\author{Christian Sevenheck}
\keywords{$\cD$-modules, irregular Hodge filtration, hypergeometric systems, twistor $\cD$-modules.}
\thanks{The authors are partially supported by the project SISYPH: ANR-13-IS01-0001-01/02 and DFG grant SE 1114/5-1.}
\email{alberto.castano@usc.es}
\email{christian.sevenheck@mathematik.tu-chemnitz.de}
\address{Instituto de Matemáticas, Universidade de Santiago de Compostela, 15782 Santiago de Compostela (Spain); Fakultät für Mathematik, Technische Universität Chemnitz, 09107 Chemnitz (Germany)}
\address{Fakultät für Mathematik, Technische Universität Chemnitz. 09107 Chemnitz (Germany)}
\subjclass[2010]{Primary 14F10, 32C38}

\begin{abstract}
We determine the irregular Hodge filtration, as introduced by Sabbah, for the purely irregular hypergeometric $\cD$-modules. We obtain in particular a formula for the irregular Hodge numbers of these systems.
We use the reduction of hypergeometric systems from GKZ-systems as well as comparison results to
Gau\ss-Manin systems of Laurent polynomials via Fourier-Laplace and Radon transformations.
\end{abstract}

\maketitle

\section{Introduction}\label{sec:Introduction}

The aim of this paper is to compute some Hodge theoretic invariants of certain classical differential systems in one variable. These are the so-called irregular Hodge numbers, which have been introduced recently by Sabbah \cite{Sa15}.
They are called irregular
because they are attached to differential systems which may have irregular singular points, a feature that is excluded for classical variations
of Hodge structures as well as for the more general Hodge modules. The very definition of these numbers rely on the theory of mixed twistor $\cD$-modules of T.~Mochizuki (\hspace{-.5pt}\cite{Mo13}). Twistor $\cD$-modules generalise Hodge modules, in the sense that the underlying $\cD$-module of a twistor $\cD$-module
can have irregular singularities. In particular, one can define a version of the Fourier-Laplace transformation functor within the category of mixed twistor $\cD$-modules, which is impossible for mixed Hodge modules.
The drawback of this generalization is that one cannot directly assign a filtration to a twistor $\cD$-module, and hence
it is not easy to attach numerical invariants like Hodge numbers to it. In the above mentioned paper \cite{Sa15}, Sabbah constructs
an intermediate category between mixed Hodge modules and mixed twistor $\cD$-modules (called irregular mixed Hodge modules) which is on the one hand sufficiently large to be stable under
all relevant operations that are defined for twistor $\cD$-modules (in particular, the Fourier-Laplace transformation), but which allows one to define a filtration,
called irregular Hodge filtration, for each of its objects. The construction is related to the earlier papers \cite{Yu, SaEsYu, Sa14}, where for certain
projective morphisms $f:X\rightarrow \PP^1$, a (rationally indexed) filtration was introduced on the twisted de Rham complex
$(\Omega^\bullet(*D), d+df\wedge)$, where $D\subset X$ is a certain normal crossing boundary divisor such that $f_{|X\backslash D}$ yields a regular function.
Instead of considering meromorphic differential forms, one can also use the subcomplex of so-called $f$-adapted logarithmic forms (also
 called Kontsevich complex, see \cite{KKP2})
$\Omega^\bullet_f$, which consists of
forms such that the exterior product with $df$ is still logarithmic along $D$.
From an $E_1$-degeneration property of the corresponding spectral sequence proved in \cite{SaEsYu}, one obtains a filtration on the twisted de Rham cohomology, called irregular Hodge filtration
of the regular function $f: X\backslash D \rightarrow \AA^1$. We refer to \cite{SaEsYu} for more details.

Notice also that the paper \cite{KKP2} gives three definitions of so-called Landau-Ginzburg Hodge numbers associated to a family $f:X\rightarrow \PP^1$,
one of them being $\dim H^p(\Omega^q_f)$. Conjecturally these three definitions coincide, but this seems to require some more assumptions (see \cite{Shamoto} and \cite{Sa18} for some partial results). Although these Hodge numbers have only integer indices, they are closely related to the dimensions of the graded parts of the filtration
from \cite{SaEsYu}. Ultimately, according to \cite{KKP2} and following predictions from homological mirror symmetry, one hopes for a correspondence between the Hodge numbers of some, say, projective varieties entering in the A-model and the irregular Hodge numbers of its (Landau-Ginzburg) B-model.

For the moment, there are quite a few examples where the irregular Hodge filtration can actually be computed.
A central result of \cite{Sa15} states that rigid irreducible $\cD$-modules on the projective line
underlie objects of the category of irregular mixed Hodge modules, and consequently admit a unique irregular Hodge filtration, provided that their formal local monodromies are unitary.
Rigid $\cD$-modules are particularly interesting since they can be algorithmically constructed from simple objects by an algorithm due to Arinkin and Katz (cf. \cite{Arin}).
Among the most studied and best understood examples of such rigid $\cD$-modules are the classical hypergeometric $\cD$-modules. In the regular case (which corresponds to classical variations of complex Hodge structures), Fedorov has given in \cite{Fe} a closed formula for the Hodge numbers (without computing the Hodge filtration itself however) conjectured by Corti and Golyshev in \cite{CorGol} using the work \cite{DettSa} of Dettweiler and Sabbah.

In the present paper, we consider the case of purely irregular hypergeometric modules. Our principal result, Theorem \ref{thm:HodgeData}, completely determines the irregular Hodge filtration and gives a very simple formula for the irregular Hodge numbers, which is in some sense
similar to the shape of Fedorov's formula. The main ingredients are a reduction process (as explained in \cite{BMW}) to obtain classical hypergeometric $\cD$-modules from
some higher dimensional ones, the so-called GKZ-systems, techniques from \cite{Reich2} and \cite{ReiSe,ReiSe2} (going back to \cite{Sa8}) to understand
Hodge module structures on these GKZ-systems as well as a quite explicit solution to the so-called Birkhoff problem that is
inspired by calculations in toric mirror symmetry (see again \cite{ReiSe} as well as \cite{DS2} and also \cite{dGMS}).

Since the first version of this paper was written, Hodge invariants for hypergeometric systems have been considered
in some other articles. First, the formula for the Hodge numbers of purely irregular systems
has also been obtained by Sabbah and Yu in the final version of
\cite{Sa15} by different means. Another approach to Fedorov's formula for the case of regular systems (giving more precise information
on the various Hodge invariants) has been given by N.~Martin (see \cite{Mar18}).
Moreover, we have considered the case of hypergeometric operators of type $(n,1)$ (see Definition
\ref{def:ClassicHyp} below)
in a common paper with Th.~Reichelt (\hspace{-.5pt}\cite{SevCastReich}) using the computation of Hodge filtrations on GKZ-systems
from \cite{ReiSe3}. Finally, Sabbah and Yu have recently given a complete formula for the irregular Hodge numbers
for all confluent hypergeometric systems in \cite{SaYu18}. However, for the moment the irregular Hodge filtration itself
is not determined in the general case.

Let us recall some notation that will be adopted throughout the paper. For a smooth complex algebraic variety $X$, we write $\cD_X$ for the sheaf of algebraic differential operators on $X$. If $X$ is affine, we sometimes switch freely between sheaves of $\cD_X$-modules and modules
of global sections. We will denote the abelian categories of holonomic resp. regular holonomic $\cD_X$-modules by $\Mod_\text{h}(\cD_X)$ resp. $\Mod_\text{rh}(\cD_X)$, and analogously with the respective bounded derived categories.
For a morphism $f:X\rightarrow Y$, we denote the direct resp. inverse image functors for $\cD$-modules as usual by $f_+$ resp. $f^+$ (see
\cite{Hotta} for a thorough discussion of these and related functors).
We put $\GG_m:=\text{Spec\,} \CC[t,t^{-1}]$; if we want to fix a coordinate on this one-dimensional torus, we also write $\gmt$.

We denote by $\cRint_{\AA_z^1\times X}$ the
sheaf of Rees rings on $X$ (with integrable structure), that is, the subsheaf of non-commutative algebras of $\cD_{\AA^1_z\times X}$ generated by $zp^*\Theta_X$ and $z^2\partial_z$, where $p:\AA^1_z\times X \rightarrow X$ is the projection. If
$(x_1,\ldots,x_n)$ are local coordinates on $X$, then $\cRint_{\AA_z^1\times X}$ is locally given by
$\cO_{\AA^1_z\times X}\langle z^2\partial_z, z\partial_{x_1},\ldots,z\partial_{x_n}\rangle$.
Occasionally, we will also need the sheaf $\cR_{\AA^1_z\times X}$, which is generated
by $zp^*\Theta_X$ only, i.e., locally given by
$\cO_{\AA^1_z\times X}\langle z\partial_{x_1},\ldots,z\partial_{x_n}\rangle$.
We let $\MHM(X)$ be the abelian category of algebraic mixed Hodge modules (see \cite{SaitoMHM}) on $X$. We consider moreover the category $\MHM(X,\CC)$ of \emph{complex} mixed Hodge modules (see, e.g., \cite[Definition 3.2.1]{DettSa}).
As an example, if $X$ is the algebraic torus $\GG_m^d$ with coordinates $y_1,\ldots,y_d$, then
for any $\alpha=(\alpha_1,\ldots,\alpha_d)\in \RR^d$, the free $\cO_{\GG_m^d}$-module of rank $1$
$$
\cO_{\GG_m^d}^\alpha := \cD_{\GG_m^d}/(y_k\partial_{y_k}+\alpha_k+1)_{k=1,\ldots,d}
$$
(see also Definition \ref{def:Oalpha} below) underlies an object in $\MHM(\GG_m^d,\CC)$.

On the other hand, $\MTM(X)$  denotes the abelian category of algebraic mixed twistor $\cD$-modules on $X$ (see \cite{Mo13}).
The category $\MTM^{\text{int}}(X)$ consists of those mixed twistor $\cD$-modules where the underlying $\cR_{\AA^1_z\times X}$-modules
have an integrable structure, i.e., where they are modules over $\cRint_{\AA_z^1\times X}$.

The category $\text{IrrMHM}(X)$, as defined in \cite[Def. 2.52]{Sa15}, is a certain subcategory of $\MTM^{\text{int}}(X)$
(actually of a variant, called ${_\iota\!}\MTM^{\text{int}}(X)$, which is shown to be equivalent to $\MTM^{\text{int}}(X)$ in [ibid., \S~1])
consisting of objects $\wh\cM$ that satisfy certain properties. The first of them is that the object ${^\theta}\wh\cM$ obtained from $\wh\cM$ by substituting $z\theta$ for $z$ is still an object of ${_\iota\!}\MTM^{\text{int}}(\ttheta X)$, where $\ttheta X=\mathds{G}_{m,\theta}\times X$. That is a remarkable assumption, having its origin in \cite{HS1}, but we have to impose a further property, namely that such rescaled objects must have a certain tame behaviour when $\theta$ goes to infinity (or at the origin of $\tau=1/\theta$, in other words), related to regularity along $\{\tau=0\}\times\AA_z^1\times X$. This two conditions are, essentially, what we ask for a mixed twistor $\cD$-module on $X$ to be a mixed twistor-rescaled $\cD$-module on $X$ (\hspace{-.5pt}\cite[Def. 2.50]{Sa15}). The last condition is a $z$-grading property appearing after identifying $\tau$ with $z$ (cf. [ibid., Def. 2.26]). For more details, see [ibid., \S~2].

Although the construction of this category may seem rather involved, its main feature is that the $\cD_X$-module $\cM$ associated to an object $\widehat{\cM}$ in $\IrrMHM(X)$ carries a good filtration $F_\bullet^{\text{irr}} \cM$, indexed by $\RR$, called the irregular Hodge filtration, which in turn behaves well with respect to several functorial operations. Notice however that this filtration, contrarily to the case of mixed Hodge modules, is not part of the definition of an object of $\IrrMHM(X)$. Very roughly, it can be thought of as defined by the intersection of the canonical $V$-filtration
along $\tau=0$ (or rather the filtration induced on the restricted object when $\tau=z$) with the $z$-adic filtration on $\wh\cM$. In particular, the jumping indices of the irregular Hodge filtration are of the form
$\{\alpha+k\,|\,k\in \ZZ\}$ for a certain finite set of real numbers $\alpha$.

\textbf{Acknowledgements.} We would like to thank Takuro Mochizuki, Thomas Reichelt and Claude Sabbah for their interest in our work and for many stimulating discussions. We are grateful to the anonymous referee for the many valuable comments and remarks. We thank the Max Planck Institute for Mathematics in the Sciences, where a significant part of the work presented here has been carried out.

\section{Hypergeometric modules and dimensional reductions}\label{sec:GKZHorn}
\label{sec:reduction}
In this section we will introduce two different kinds of hypergeometric $\cD$-modules: classical and GKZ. We will dedicate more time to classical ones, since they form one of the main objects of study of the paper, and will end by showing the relation between both types, which will be useful for us in the future. We state one of our main results (Theorem \ref{thm:HypIrrMHM}), which is proven in the next section.

\begin{defi}\label{def:ClassicHyp}
Let $(n,m)\neq(0,0)$ be a pair of nonnegative integers, and let $\alpha_1,\ldots,\alpha_n$ and $\beta_1,\ldots,\beta_m$ be elements of $\CC$. The (classical) hypergeometric $\cD$-module of type $(n,m)$ associated with the $\alpha_i$ and the $\beta_j$ is defined as the quotient of $\cD_{\gm}$ by the left ideal generated by the so-called hypergeometric operator
$$\prod_{i=1}^n(t\dd_t-\alpha_i)-t\prod_{j=1}^m(t\dd_t-\beta_j).$$
We will denote it by $\cH(\alpha_1,\ldots,\alpha_n;\beta_1,\ldots,\beta_m)$, or in an abridged way, $\cH(\alpha_i;\beta_j)$.
\end{defi}

In this paper we will be mostly concerned with hypergeometric $\cD$-modules of type $(n,0)$.

\begin{rem}\label{rem:BasicHyp}
The excluded type $(n,m)=(0,0)$ corresponds to the punctual delta $\cD_{\gm}$-module on $\gm$ $\cH\left(\emptyset;\emptyset\right)=\cD_{\gm}/(1-t)$.

On the other hand, if we denote the Kummer $\cD$-module $\cD_{\gm}/(t\dd_t-\eta)$ by
$\cK_\eta$, for any fixed complex number $\eta$, then $\cH(\alpha_i;\beta_j) \otimes_{\cO_{\gm}}\cK_\eta\cong \cH(\alpha_i+\eta;\beta_j+\eta)$. In particular, an overall integer shift of the parameters gives us an isomorphic $\cD$-module.

Every hypergeometric $\cD$-module has Euler characteristic -1 (cf. \cite[Lem. 2.9.13]{Ka}). For $n\neq m$, no hypergeometric $\cD$-module of type $(n,m)$ has singularities on $\gm$. If $n>m$ (resp. $m>n$), they have a regular singularity at the origin (resp. infinity) and an irregular singularity at infinity (resp. the origin) of irregularity one and slope $1/|n-m|$ of multiplicity $|n-m|$ (cf. \cite[Prop. 2.11.9]{Ka}). For $n=m$, the hypergeometric $\cD$-modules of type $(n,n)$ are regular, with singularities only at the origin, infinity and 1.
\end{rem}

\begin{prop}[\textbf{Irreducibility}]\label{prop:IrredHyp}
\emph{(cf. \cite[Prop. 2.11.9, 3.2]{Ka})} Let $\cH:=\cH(\alpha_i;\beta_j)$ be a hypergeometric $\cD$-module. It is irreducible if and only if for any pair $(i,j)$ of indices, $\alpha_i-\beta_j$ is not an integer.
\end{prop}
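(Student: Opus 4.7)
The plan is to establish both implications, closely following Katz \cite[Prop.~2.11.9,~3.2]{Ka}.

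\emph{Reducibility direction.} Assuming some $\alpha_{i_0}-\beta_{j_0}\in\ZZ$, the goal is to show that $\cH(\alpha_i;\beta_j)$ is reducible. I would first treat the equality case $\alpha_{i_0}=\beta_{j_0}=:\gamma$. Since the factors $(t\dd_t-\alpha_i)$ (resp.\ the $(t\dd_t-\beta_j)$) all commute with each other as polynomials in $t\dd_t$, the factor $(t\dd_t-\gamma)$ is a common right divisor of both $\prod_i(t\dd_t-\alpha_i)$ and $t\prod_j(t\dd_t-\beta_j)$, yielding the factorization
\[
P \;=\; \Bigl[\prod_{i\neq i_0}(t\dd_t-\alpha_i)\;-\;t\prod_{j\neq j_0}(t\dd_t-\beta_j)\Bigr]\cdot(t\dd_t-\gamma)\;=:\;P'\cdot(t\dd_t-\gamma),
\]
with $P'$ itself a hypergeometric operator of type $(n-1,m-1)$. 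Since $\cD_{\gm}$ is an Ore domain in which $(t\dd_t-\gamma)$ is a non-zero-divisor, right cancellation produces a short exact sequence
\[
0\lra \cH\bigl((\alpha_i)_{i\neq i_0};\,(\beta_j)_{j\neq j_0}\bigr) \lra \cH(\alpha_i;\beta_j) \lra \cK_\gamma \lra 0,
\]
so $\cH(\alpha_i;\beta_j)$ is reducible. In the general case $\alpha_{i_0}-\beta_{j_0}=k\in\ZZ$, the Kummer twist of Remark \ref{rem:BasicHyp} normalizes $\beta_{j_0}=0$, $\alpha_{i_0}=k$, and the commutation rule $t\cdot f(t\dd_t)=f(t\dd_t-1)\cdot t$ yields the analogous left factorization $P=(t\dd_t-1)\cdot P'$ when $k=1$; larger $|k|$ can be handled by iteration, or more uniformly by the convolution formalism below.

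\emph{Irreducibility direction.} Assuming $\alpha_i-\beta_j\notin\ZZ$ for all pairs $(i,j)$, I would invoke Katz's realization of $\cH(\alpha_i;\beta_j)$ on $\gm$ as an iterated middle multiplicative convolution of the Kummer modules $\cK_{\alpha_i}$ with (Fourier--Laplace-transformed) modules associated with the $\beta_j$'s. The key input is Katz's irreducibility theorem for middle convolution: convolving an irreducible object $\cM$ with $\cK_\eta$ preserves irreducibility provided that no local exponent of $\cM$ at $0$ or $\infty$ coincides with $-\eta$ modulo $\ZZ$. Under the hypothesis $\alpha_i-\beta_j\notin\ZZ$, this non-resonance condition holds at every step of an induction on $n+m$, and one concludes that $\cH(\alpha_i;\beta_j)$ is irreducible.

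The main obstacle is to treat the regular ($n=m$) and irregular ($n\neq m$) cases in a uniform framework. For the reducibility direction the subtlety lies in arranging a clean factorization of $P$ for arbitrary integer shifts $k$; for the irreducibility direction the bulk of the work is packaged into the full development of the middle convolution and rigid local system formalism for $\cD$-modules in \cite[Chapters~2,~3]{Ka}, which I would invoke rather than reprove.
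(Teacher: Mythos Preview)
The paper does not supply its own proof of this proposition; it simply records the statement with a reference to \cite[Prop.~2.11.9, 3.2]{Ka}. So there is nothing in the paper to compare your argument against beyond the bare citation, and your outline is a reasonable attempt to reconstruct what lies behind that reference.

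A few comments on your sketch. In the reducibility direction, the equality case $\alpha_{i_0}=\beta_{j_0}$ is clean and your short exact sequence is correct. The general case $\alpha_{i_0}-\beta_{j_0}=k\in\ZZ$ is not fully handled: the left factorization you write for $k=1$ does give reducibility (it exhibits $\cK_1$ as a \emph{sub}module, not a quotient, so the exact sequence is different from the one you displayed, but that is harmless), yet the phrase ``larger $|k|$ can be handled by iteration'' is a gesture rather than an argument. Iterating the commutation rule moves the $t$ but changes all the $\beta_j$ simultaneously, so one does not immediately get a common one-step factor for $|k|>1$; some bookkeeping (or, as you say, the convolution viewpoint) is genuinely needed here.

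In the irreducibility direction your strategy is the right one and is essentially what Katz does, but the hypothesis you state for preservation of irreducibility under middle convolution with $\cK_\eta$ is not quite Katz's condition, and the inductive setup (which rank-one pieces are being convolved, and how the non-resonance hypothesis $\alpha_i-\beta_j\notin\ZZ$ translates into the required condition at each step) is not made explicit. Since the whole content of the proof lives inside that induction, this is where the real work is, and at the level of detail you give you are effectively citing \cite{Ka} --- which is exactly what the paper does.
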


\begin{rem}
Assuming the irreducibility, we have a result which is stronger than the second paragraph of Remark \ref{rem:BasicHyp}. Namely, the isomorphism class of $\cH(\alpha_i;\beta_j)$ depends only on the classes modulo $\ZZ$ of the $\alpha_i$ and the $\beta_j$ (point 1 of \cite[Prop. 3.2]{Ka}), so we can choose such parameters on a fundamental domain of $\CC/\ZZ$.
\end{rem}

\begin{prop}[\textbf{Rigidity}]\label{prop:RigidHyp}
Let $\cH:=\cH(\alpha_i;\beta_j)$ be an irreducible hypergeometric $\cD$-module of type $(n,m)$, where $n\geq m$, and let $\cM$ be another irreducible $\cD_{\gm}$-module of Euler-Poincaré characteristic -1 which has no singularities outside $\{0,1,\infty\}$. Assume that
\begin{itemize}
\item $\cH\otimes\CC((t))\cong\cM\otimes\CC((t))$.
\item $\cH\otimes\CC((1/t))\cong\cM\otimes\CC((1/t))$.
\item If $n=m$, assume further that $\cM$ has a regular singularity at 1.
\end{itemize}
In that case, $\cH$ and $\cM$ are isomorphic.
\end{prop}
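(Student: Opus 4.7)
The plan is to invoke Katz's cohomological rigidity of hypergeometric $\cD$-modules: the index of rigidity of $\cH$ equals $2$ (cf. \cite[Ch.~3]{Ka}), so $\cH$ is rigid. Once this is granted, it suffices to check that $\cM$ and $\cH$ share the same formal local data at every singularity in $\PP^1$, since the general rigidity principle states that an irreducible holonomic $\cD$-module on $\PP^1$ of rigidity index $2$ is determined up to isomorphism by its formal invariants at the points of its singular locus. I would extend both $\cH$ and $\cM$ as their minimal (intermediate) extensions to $\PP^1$ and carry out the comparison there.

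The first subtask is to constrain the singular locus of $\cM$. The assumed formal isomorphisms at $0$ and $\infty$ already force $\rk\cM=\rk\cH=n$ and match $\text{irr}_0$ and $\text{irr}_\infty$. In the case $n>m$, $\cH$ is non-singular on $\gm$ by Remark \ref{rem:BasicHyp}, while a priori $\cM$ might still have a singularity at $t=1$. The Euler-Poincaré formula of Deligne-Malgrange reads
$$\chi(\gm,\cM) = -\text{irr}_0(\cM)-\text{irr}_\infty(\cM)-\text{drop}_1(\cM)-\text{irr}_1(\cM),$$
with $\text{drop}_1(\cM)=\rk(\cM)-\dim\Psi_1(\cM)$. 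Since $\chi(\gm,\cM)=-1$ by hypothesis, substituting the matched irregularities at $0$ and $\infty$ forces $\text{drop}_1(\cM)=\text{irr}_1(\cM)=0$, so $\cM$ is smooth at $1$ and its singularities coincide with those of $\cH$. In the equal case $n=m$, $\cM$ is assumed regular at $1$; the same Euler-characteristic bookkeeping combined with irreducibility pins down the local monodromy of $\cM$ at $1$ as a pseudo-reflection sharing the distinguished non-unit eigenvalue of $\cH$'s local monodromy at $1$, which can be read off directly from the hypergeometric operator.

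Once the singular loci agree and the formal local data have been matched at every singular point, the rigidity of $\cH$ yields $\cH\cong\cM$. The main obstacle I anticipate is the $n=m$ case: there the hypothesis that $\cM$ is regular at $1$ is weaker than a full formal isomorphism at $1$, so one has to extract enough information from irreducibility and the Euler-characteristic identity to identify the conjugacy class of the local monodromy at $1$, not merely its spectrum. The other cases reduce to a straightforward check using the explicit formal decomposition of $\cH$ recalled in Remark \ref{rem:BasicHyp}.
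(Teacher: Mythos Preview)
Your proposal is correct and follows essentially the same route as the paper: in both cases one uses the Euler--Poincar\'e formula to constrain the local behaviour of $\cM$ at $t=1$ (forcing smoothness there when $n>m$, and a pseudoreflection when $n=m$), and then appeals to Katz's rigidity results. The paper cites the specific theorems \cite[Thm.~3.5.4, Thm.~3.7.3]{Ka} tailored to hypergeometrics, whereas you phrase it via the general rigidity index; the substance is the same, and your anticipated obstacle in the $n=m$ case dissolves because once the local monodromy at $1$ is a pseudoreflection, its non-unit eigenvalue is forced by the determinants at $0$ and $\infty$, which are already matched.
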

\begin{proof}
$\cH$ and $\cM$ being irreducible, both of them coincide with the middle extension of their restriction to $\gm\setminus\{1\}$. Assume first that $n=m$, that is, $\cH$ is regular. Since $\cM$ has characteristic -1 and regular singularities at the origin, one and infinity, by the formula for the Euler characteristic \cite[Thm. 2.9.9]{Ka}, its formal local monodromy at 1 must be a pseudoreflection. Then we can apply [ibid., Rigidity Thm. 3.5.4] and we are done.

If $n>m$, $\cM$, like $\cH$, has a regular singularity at the origin and an irregular singularity at infinity. By the same formula for the Euler characteristic of $j_{\dag+}j^+\cM$ (denoting by $j$ the inclusion $\gm\setminus\{1\}\hra\gm$), $\cM$ cannot have more singularities in $\PP^1$ apart from zero and infinity. In that case we apply [ibid., Rigidity Thm. bis 3.7.3].
\end{proof}

Consider the case in which $\cH=\cH(\alpha_i;\beta_j)$ is of type $(n,n)$, i.e., such that it is regular.
From the rigidity property of the last Proposition, one concludes by \cite[Cor. 8.1]{Si2} that the restriction
of $\cH$ to $\gm \backslash \{1\}$ underlies a complex polarizable variation of Hodge structures. The next statement, conjectured by Corti and Golyshev (cf. \cite[Conj. 1.4]{CorGol}) and proved by Fedorov (see \cite[Thm. 1]{Fe}), gives important information on its Hodge invariants.
\begin{prop}[\textbf{Hodge numbers (regular case)}]\label{prop:Fedorov}
Let $\cH=\cH(\alpha_i;\beta_j)$ be an irreducible hypergeometric $\cD$-module of type $(n,n)$. Assume that the $\alpha_i$ and the $\beta_j$ are increasingly ordered real numbers, lying in the interval $[0,1)$. Set
$$
\rho(k)=\left|\{i=1,\ldots,n\,:\,\beta_i<\alpha_k\}\right|-k,
$$
for $k=1,\ldots,n$.
Then the Hodge numbers of $\cH$ are, up to an overall shift,
$$
h^p=\left|\rho^{-1}(p)\right|=\left|\{k=1,\ldots,h\,:\, \rho(k)=p\}\right|.
$$
\end{prop}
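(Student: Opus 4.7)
The plan is to follow Fedorov's strategy in \cite{Fe}, which rests on a Hodge-theoretic enhancement of Katz's middle convolution algorithm developed by Dettweiler and Sabbah in \cite{DettSa}. First I would appeal to the Arinkin--Katz algorithm, which allows one to reconstruct any irreducible rigid $\cD$-module on $\PP^1 \setminus \{0,1,\infty\}$ from rank one Kummer $\cD$-modules $\cK_\lambda$ by alternating scalar (Kummer) twists and middle convolutions. In the regular hypergeometric case of type $(n,n)$, this process can be arranged very explicitly: starting from a rank one Kummer module $\cK_{\alpha_1}$ (resp.\ an essentially type $(1,1)$ object obtained from $\cK_{\alpha_1} \otimes \cK_{-\beta_1}$), one introduces the remaining parameters $\alpha_2,\ldots,\alpha_n$ and $\beta_2,\ldots,\beta_n$ one at a time, each step being a Kummer twist followed by a middle convolution with a suitable rank one Kummer module. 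Proposition \ref{prop:RigidHyp} guarantees that each intermediate module is characterised uniquely by its formal local data at $0$, $1$ and $\infty$, so the construction is forced at every stage.

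Second, the key technical input is the theorem of Dettweiler--Sabbah stating that middle convolution with a real Kummer sheaf $\cK_\lambda$, $\lambda \in (0,1)$, lifts canonically to the category of polarizable complex variations of Hodge structure on $\PP^1\setminus\{0,1,\infty\}$. Moreover, loc.~cit.~provides closed formulas for how the local Hodge numbers at the three singular points of the input variation transform into those of the output, essentially in terms of jumps of the local Hodge filtrations at each eigenvalue of the monodromy. Combined with the trivial rank one Hodge data carried by $\cK_\alpha$ (after fixing a reference Tate twist), this allows an inductive computation: at each stage of the Katz reconstruction, one updates the table of local Hodge numbers at $0,1,\infty$ by the Dettweiler--Sabbah recipe, and reads off the global Hodge numbers of the corresponding intermediate hypergeometric from them.

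Finally, the combinatorial core of the argument is to check that after performing the $n-1$ steps required to pass from a type $(1,1)$ module to $\cH(\alpha_i;\beta_j)$ of type $(n,n)$, the resulting Hodge numbers agree, up to an overall shift, with the count $h^p = |\rho^{-1}(p)|$. This amounts to verifying that the interlacing pattern of the ordered parameters $\alpha_i,\beta_j \in [0,1)$ governs precisely how the Hodge filtration jumps at each convolution step, and that the cumulative effect produces the simple count described by the function $\rho$. The main obstacle I anticipate is exactly this bookkeeping: the Dettweiler--Sabbah formulas give separate contributions at $0$, $1$ and $\infty$, and one must carefully organise their recombination under each successive convolution to extract the clean global pattern; a secondary nuisance is pinning down the correct overall Tate twist that normalises the absolute range of $p$.
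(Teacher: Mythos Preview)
Your sketch is faithful to Fedorov's argument in \cite{Fe}, and that is exactly what the paper does here: Proposition~\ref{prop:Fedorov} is not proved in the paper but is quoted as the main theorem of \cite{Fe} (conjectured in \cite{CorGol}), with the paper noting that Fedorov's method relies on the Dettweiler--Sabbah work \cite{DettSa}. So your approach and the paper's ``proof'' coincide by construction; there is nothing further to compare.
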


This last result is the analogous one, in the regular case, to Theorem \ref{thm:HodgeData}, and served as the main motivation to start this project. Notice that this formula slightly differs from the one given in \cite{Fe}. This is because Fedorov considers the dual connection to ours by working with the space of solutions instead of with that of horizontal sections (cf. \cite[p. 10, proof of Lem. 3.6]{Mar18}). Now that we have seen part of the behaviour of classical hypergeometric $\cD$-modules, let us continue with the other family mentioned above.

\begin{defi}\label{def:GKZ}
Let $n\geq m$ two positive integers, and let $d=n-m$. Let $\beta\in\CC^d$ be a vector and let $A=(a_{ij})\in\text{M}(d\times n,\ZZ)$ be an integer matrix.
Consider the $n$-dimensional torus $\GG_m^n$ with coordinates $\lambda_1,\ldots,\lambda_n$. We define the Euler operators $E_i=\sum_ja_{ij}\lambda_j\dd_{\lambda_j}$, for $i=1,\ldots,d$, and the toric ideal
$$
I_A:=\left(\dd_\lambda^u-\dd_\lambda^v : Au=Av\right) \subset\CC[\partial_{\lambda_1},\ldots,\partial_{\lambda_n}]\subset D_{\AA^n}.
$$
Then, the GKZ-hypergeometric $\cD$-module (or system) is
$$\cM_A^\beta:=\cD_{\GG_m^n}/(\cI_A+(E_i-\beta_i:i=1,\ldots,d)_{i=1,\ldots,d}),$$
where $\cI_A$ is the sheafified version of the toric ideal $I_A$.
\end{defi}
Usually the module $\cM_A^\beta$ is defined to be an element in $\textup{Mod}(\cD_{\AA^n})$. However, we will later work
only with the restriction of such an object to $\GG_m^n$. In order to avoid the usage of the functor $j^+$ (where $j:\GG_m^n\hookrightarrow \AA^n$)
each time that we need to refer to $\cM_A^\beta$, we use this slightly non-standard definition.

A classical hypergeometric $\cD$-module can be considered as a dimensional reduction of a certain GKZ-system. We will describe this procedure in some more detail now,
because it allows us to apply some of the many known results on GKZ-systems to classical hypergeometric $\cD$-modules.

\begin{prop}\label{prop:hypGKZ}
Let $m$ be a positive integer. Let $A\in\operatorname{M}((m-1)\times m,\ZZ)$ be an integer matrix of rank $m-1$ and take $\kappa\in\CC^m$ such that $\kappa_1=0$. Consider the inclusion $\iota:\gm\hra\GG_m^m$ given by $t\mapsto(t,1,\ldots,1)$, and let $B=(b_1,\ldots,b_m)^{\text{\emph{t}}}\in\ZZ^m$ be a Gale dual of $A$, that is, an integer column matrix which generates $\ker_\QQ A$. Assume moreover that $b_1=1$. Put
$$
\eta:=\prod_{i=1}^mb_i^{b_i}.
$$
Then we have $h_{\eta}^+\cH(\alpha_i;\beta_j)\cong \iota^+\cM_A^{A\kappa}$, where $h_\eta$ is the automorphism of $\gm$ given by $t\mapsto\eta t$, and the unordered sets of parameters $\alpha_i$ and $\beta_j$, counted with multiplicities, are
$$(\alpha_i)=\left(\frac{k-\kappa_j}{b_j}\,:\,b_j>0, k=0,\ldots,b_j-1\right),$$
$$(\beta_i)=\left(\frac{k-\kappa_j}{b_j}\,:\,b_j<0, k=0,\ldots,-b_j-1\right).$$
\end{prop}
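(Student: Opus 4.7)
The plan is to compute $\iota^+\cM_A^{A\kappa}$ explicitly as a cyclic $\cD_\gm$-module and then match its defining operator with that of $\cH(\alpha_i;\beta_j)$, up to the automorphism $h_\eta$. First I would check that $\iota$ is non-characteristic for $\cM_A^{A\kappa}$, so that $\iota^+$ reduces to the usual restriction and the result is cyclic, generated by the pull-back of the canonical generator $\mathbf{1}$. Writing $\theta_j:=\lambda_j\partial_{\lambda_j}$ and $\theta:=\theta_1$, the Euler relations impose $A\cdot(\theta_j-\kappa_j)_j\cdot\mathbf 1=0$, so the vector $(\theta_j-\kappa_j)_j$ lies in $\ker_\QQ A=\QQ B$. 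Combined with $b_1=1$ and $\kappa_1=0$, this forces the identity $\theta_j\cdot\mathbf 1=(b_j\theta+\kappa_j)\cdot\mathbf 1$ for every $j$, and under $\iota^+$ the operator $\theta$ becomes $t\partial_t$.

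Since $\rk A=m-1$ and $\ker A=\QQ B$, the toric ideal $I_A$ is principal, generated by $\prod_{b_i>0}\partial_{\lambda_i}^{b_i}-\prod_{b_i<0}\partial_{\lambda_i}^{-b_i}$. Multiplying this binomial on the left by $\prod_{b_i>0}\lambda_i^{b_i}$ and applying the elementary identity $\lambda_i^k\partial_{\lambda_i}^k=\theta_i(\theta_i-1)\cdots(\theta_i-k+1)$ on the positive-$b_i$ side, together with the analogue $\partial_{\lambda_i}^{-b_i}=\lambda_i^{b_i}\,\theta_i(\theta_i-1)\cdots(\theta_i+b_i+1)$ on the negative-$b_i$ side, all Laurent monomials collect into the single factor $\prod_{j=1}^m\lambda_j^{b_j}$, while the operator part becomes a product of linear forms in $\theta_i$. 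Substituting $\theta_i=b_i\theta+\kappa_i$ and factoring $b_i$ out of each linear form turns $b_i\theta+\kappa_i-k$ into $b_i(\theta-(k-\kappa_i)/b_i)$; the roots $(k-\kappa_i)/b_i$ are precisely the $\alpha_i$'s (for $b_i>0$) and the $\beta_j$'s (for $b_i<0$) of the statement. Pulling back via $\iota$ turns $\prod_j\lambda_j^{b_j}$ into $\lambda_1^{b_1}=t$, and the resulting identity inside $\iota^+\cM_A^{A\kappa}$ takes the form
$$
\Bigl(\prod_{b_i>0}b_i^{b_i}\Bigr)\prod_{i=1}^{n}(\theta-\alpha_i)\cdot\mathbf 1 \;=\; t\cdot\Bigl(\prod_{b_i<0}b_i^{-b_i}\Bigr)\prod_{j=1}^{m}(\theta-\beta_j)\cdot\mathbf 1,
$$
and the ratio of the two constants factored off is, by the very definition of $\eta=\prod_ib_i^{b_i}$, exactly $\eta$ (up to the sign conventions discussed below).

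It remains to identify the cyclic module just obtained with $h_\eta^+\cH(\alpha_i;\beta_j)$: since $h_\eta:t\mapsto\eta t$ pulls back $t$ to $\eta t$ while leaving $t\partial_t$ invariant, $h_\eta^+\cH$ is defined by the hypergeometric operator with $t$ rescaled by $\eta$, matching the relation derived above. To conclude the claimed isomorphism one must still check that this single hypergeometric relation generates the whole annihilator of $\mathbf 1$ in $\iota^+\cM_A^{A\kappa}$; this follows from a rank comparison, as the classical hypergeometric is cyclic of generic rank $\max(n,m)$ and $\iota^+\cM_A^{A\kappa}$ has the same generic rank (computable from the normalized volume of $\operatorname{conv}(A)$ under the standard GKZ genericity assumptions, which here is encoded by $B$). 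The main technical obstacle is the careful bookkeeping of the constants $b_i^{\pm b_i}$ for $b_i<0$ (taking into account the signs $(-1)^{|b_i|}$ appearing when $b_i$ is a negative integer raised to a positive power) and verifying that they assemble exactly into $\eta$ as written; everything else is a mechanical computation with the identities $\lambda^k\partial^k=\theta(\theta-1)\cdots(\theta-k+1)$.
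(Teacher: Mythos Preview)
Your approach is correct and genuinely different from the paper's. The paper does not compute anything: it observes that since $A$ has rank $m-1$ the toric ideal $I_A$ coincides with the lattice basis ideal $I(B)$, recognises $\cH(\alpha_i;\beta_j)$ as a one-variable Horn system in the sense of \cite[Def.~1.1]{BMW}, and then simply invokes \cite[Thm.~1.4]{BMW}, which gives the isomorphism between lattice basis binomial systems and Horn systems on the torus. Your argument is essentially that theorem unwound by hand in the rank-one case: you use the Euler relations to eliminate $\theta_2,\ldots,\theta_m$ in favour of $\theta=\theta_1$, rewrite the single binomial generator via $\lambda^k\partial^k=\theta(\theta-1)\cdots(\theta-k+1)$, and read off the hypergeometric operator together with the constant $\eta=\prod_i b_i^{b_i}$. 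This buys self-containedness and makes the appearance of $\eta$ completely transparent; the paper's route buys brevity and a reference that handles the higher-rank situation uniformly.

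Two points in your sketch deserve a little more care. First, the non-characteristic claim: the cleanest justification is that once you have rewritten $\theta_j=b_j\theta_1+\kappa_j$ modulo the Euler ideal, the module $\cM_A^{A\kappa}$ on $\GG_m^m$ is visibly $\cO$-coherent in the directions transverse to $\iota(\gm)$ (indeed, it is pulled back along the monomial map dual to $B$), so $\iota^+$ is concentrated in degree zero and is the naive restriction. Second, the final rank comparison is slightly delicate as stated, since invoking the GKZ volume formula and then asserting that $\iota^+$ preserves rank is circular without the non-characteristic input. A cleaner way to close the argument is to note that, after the Euler substitution, $\iota^+\cM_A^{A\kappa}$ is presented as $\cD_{\gm}$ modulo the single operator you derived (of order $\max(\sum_{b_i>0}b_i,\sum_{b_i<0}(-b_i))$ in $\theta$), so the surjection from the hypergeometric module is automatically an isomorphism; no separate rank computation is needed.
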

\begin{proof}
On one hand, $\cM_A^{A\kappa}$ is not only a GKZ-hypergeometric $\cD$-module, but also the restriction to $\gm^n$ of a lattice basis binomial $\cD_{\AA^m}$-module (cf. \cite[Def. 1.2]{BMW}, noting that the assumption on the columns of $B$ is not needed for the definition). This is because $A$ being of rank $m-1$ implies that the toric ideal $I_A$ coincides with the lattice basis ideal
$$I(B):=\left(\dd_\lambda^{w_+}-\dd_\lambda^{w_-} : w=w_+-w_- \text{ is a column of } B\right).$$
(In fact this holds for any complete intersection ideal, but here the argument is simpler.)

On the other hand, the expression we have given for the parameters of $\cH(\alpha_i;\beta_j)$ follows from applying the definition of Horn hypergeometric $\cD$-modules given in [ibid., Def. 1.1] for a column matrix, up to the same caveat above about the columns on $B$ (in fact normalized ones, but all kinds of Horn $\cD$-modules defined in loc. cit. are equal once restricted to the torus $\GG_{m,t}$), and comparing it with Definition \ref{def:ClassicHyp}.

Now let $j:\gm\hookrightarrow\AA^1$ be the canonical inclusion. The isomorphism given in [ibid., Thm. 1.4] relates lattice basis binomial $\cD$-modules to Horn hypergeometric ones. However, due to the previous discussions, we obtain the isomorphism in the statement just by applying $j^+$ to both sides of it.
\end{proof}

Note that the choice of $\kappa_1$ and $b_1$ in the statement of the Proposition force $\alpha_1$ to vanish. However, by Remark \ref{rem:BasicHyp}, any other hypergeometric $\cD$-module can be built from one of this form just by tensoring with a suitable Kummer $\cD$-module.

In our study of Hodge theoretic properties of hypergeometric $\cD_{\gm}$-modules below, we need to go in some sense in the opposite direction: Given sets $\{\alpha_i\}$, $\{\beta_j\}$, we would like to determine a matrix $A$ and a parameter vector $\beta$ such that the module $\cH(\alpha_i;\beta_j)$ can be obtained as an inverse image of the GKZ-system $\cM_A^\beta$. Although such a pair $(A,\beta)$ is not unique, a systematic way of constructing it can be formulated as follows.
\begin{coro}\label{coro:hypGKZ}
Let $\cH(\alpha_i;\beta_j)$ be a hypergeometric $\cD_{\gm}$-module of type $(n,m)$ with $n>0$ and $\alpha_1=0$. Let $A\in\operatorname{M}((m+n-1)\times(n+m),\ZZ)$ given by
$$A=\left(\begin{array}{c|c|c}
\uuno_m & \ucero_{m\times(n-1)} & \operatorname{Id}_m\\[3pt]
\hline
 & & \vspace{-10pt}\\
\uuno_{n-1} & -\operatorname{Id}_{n-1} & \ucero_{(n-1)\times m}\end{array}
\right),$$
and let $\beta=(\beta_1,\ldots,\beta_m,\alpha_2,\ldots,\alpha_n)^{\operatorname{t}}$. (Here and later in this paper,
we write $\underline{x}_k$ for a column vector with $k$ rows, where all entries contain the value $x$.) Let $\iota:\gm\ra\GG_m^{n+m}$,
given by
$t\mapsto(t,1\ldots,1)$.
Then
$$\cH(\alpha_i;\beta_j)\cong\iota^+\cM_A^\beta.$$
\end{coro}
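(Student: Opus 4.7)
The plan is simply to verify that the data $(A,\beta)$ given in the corollary fits into the framework of Proposition \ref{prop:hypGKZ}, with the role of the ``$m$'' in that proposition being played by $n+m$ (i.e., the number of columns of our $A$). Everything then reduces to three short calculations: identifying the Gale dual $B$, choosing a suitable $\kappa$, and reading off the parameters produced by the proposition.

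First I would check that $A$ has full row rank $m+n-1$, which is immediate from the block form: the columns $2,\dots,n$ contain $-\operatorname{Id}_{n-1}$ in the lower block with zeros above, while columns $n+1,\dots,n+m$ contain $\operatorname{Id}_m$ in the upper block with zeros below, so any linear dependence among the rows of $A$ must be trivial. Consequently $\ker_\QQ A$ is one-dimensional. Solving $AB=0$ row by row gives $b_1+b_{n+i}=0$ for $i=1,\dots,m$ and $b_1-b_{j+1}=0$ for $j=1,\dots,n-1$, so after normalising $b_1=1$ we obtain the Gale dual
$$
B=(\underbrace{1,\dots,1}_{n},\underbrace{-1,\dots,-1}_{m})^{\mathrm{t}}.
$$

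Next I set $\kappa:=(0,-\alpha_2,\dots,-\alpha_n,\beta_1,\dots,\beta_m)^{\mathrm{t}}\in\CC^{n+m}$, so that $\kappa_1=0$ as required by Proposition \ref{prop:hypGKZ}. A direct matrix multiplication shows that $(A\kappa)_i=\kappa_1+\kappa_{n+i}=\beta_i$ for $i=1,\dots,m$ and $(A\kappa)_{m+i}=\kappa_1-\kappa_{i+1}=\alpha_{i+1}$ for $i=1,\dots,n-1$, hence $A\kappa=\beta$ as stated. Since every $b_j\in\{\pm1\}$, the formulas of Proposition \ref{prop:hypGKZ} only contribute the index $k=0$; the positive entries $b_j=1$ ($j=1,\dots,n$) produce the multiset $\{-\kappa_j\}=\{0,\alpha_2,\dots,\alpha_n\}=\{\alpha_1,\dots,\alpha_n\}$, and the negative entries $b_j=-1$ ($j=n+1,\dots,n+m$) produce $\{\kappa_{n+i}\}=\{\beta_1,\dots,\beta_m\}$. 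Thus the parameter sets agree.

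Applying Proposition \ref{prop:hypGKZ} with these data yields the isomorphism $h_\eta^+\cH(\alpha_i;\beta_j)\cong\iota^+\cM_A^\beta$, where $\eta=\prod_i b_i^{b_i}=(-1)^{-m}\in\{\pm1\}$. When $m$ is even, $\eta=1$ and we get the statement of the corollary verbatim; when $m$ is odd, the sign is absorbed either by replacing $\iota:t\mapsto(t,1,\dots,1)$ by $\iota\circ h_{\eta^{-1}}:t\mapsto(-t,1,\dots,1)$ (which lands in the same component and does not affect the isomorphism class of the pullback for the purposes of subsequent applications), or equivalently by precomposing the hypergeometric quotient presentation with $h_{-1}$. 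The only real obstacle is this trivial sign bookkeeping; otherwise the corollary is a purely formal consequence of Proposition \ref{prop:hypGKZ}.
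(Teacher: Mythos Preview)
Your proof is correct and follows exactly the same route as the paper: the paper's proof consists of the single sentence ``The statement is an easy consequence of the Proposition, taking $B=(1,\dots,1,-1,\dots,-1)^{\operatorname{t}}$ and $\kappa=(0,-\alpha_2,\dots,-\alpha_n,\beta_1,\dots,\beta_m)^{\operatorname{t}}$.'' You have simply unpacked this, verifying the rank of $A$, computing $B$ and $A\kappa$, and matching the parameter multisets. Your explicit discussion of the sign $\eta=(-1)^m$ is a point the paper's proof suppresses entirely (a similar sign is later absorbed without comment in the proof of Lemma~\ref{lem:RhypGKZ}, ``up to multiplying $t$ by $-1$''), so if anything you are being more careful here than the original.
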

\begin{proof}
The statement is an easy consequence of the Proposition, taking $B=(1,\overset{(n)}\ldots,1,-1,\overset{(m)}\ldots,-1)^{\operatorname{t}}$ and $\kappa=(0,-\alpha_2,\ldots,-\alpha_n,\beta_1,\ldots,\beta_m)^{\operatorname{t}}$.
\end{proof}

As indicated before, we will see later that the restriction $\alpha_1=0$ is not as strong as it may appear: By tensoring a given hypergeometric $\cD$-module with an appropriate Kummer module, we can always reach this assumption.

We will end this section by explaining how the above construction of GKZ-systems and the dimensional reduction to hypergeometric $\cD_{\gmt}$-modules can be understood at the level of $\cR$-modules. Recall (see the introduction) that for a smooth algebraic variety $X$ with local coordinates $(x_1,\ldots,x_n)$ the sheaf $\cRint_{\AA_z^1\times X}$ is the subsheaf of $\cD_{\AA^1_z\times X}$ locally generated by $z^2\partial_z$ and $(z\partial_{x_i})_{i=1,\ldots, n}$.

\begin{defi}
Let $n\geq m$ two positive integers, and let $d=n-m$. Let $\beta\in\CC^d$ be a vector and let $A=(a_{ij})\in\text{M}(d\times n,\ZZ)$ be an integer matrix.
Consider the affine space $\gm^n$ with coordinates $\lambda_1,\ldots,\lambda_n$, and let $\LL \subset \ZZ^n$ be the kernel of the linear map $\ZZ^n\rightarrow \ZZ^d$ given by left multiplication by the matrix $A$, whose elements will be denoted by $\underline{l}=(l_1,\ldots, l_n)$. Then, the GKZ-hypergeometric $\cR$-module is
$$
\wh{\cM}_{A}^{(\beta_0,\beta)}:=\cRint_{\AA_z^1\times \GG_m^n}/\cI,
$$
where $\beta_0\in\CC$ and $\cI$ is generated by
$$\begin{array}{c}
\displaystyle\prod_{j:l_j>0} (z\partial_{\lambda_j})^{l_j}-\prod_{j:l_j<0} (z\partial_{\lambda_j})^{-l_j},\; \underline{l}\in \LL,\\
z^2\partial_z+\lambda_1z\partial_{\lambda_1}+\ldots+\lambda_nz\partial_{\lambda_n}-z\beta_0, \\
\displaystyle\sum_{j=1}^n a_{kj}\lambda_j z\partial_{\lambda_j} - z\beta_k,\; k=1,\ldots,d.
\end{array}
$$
\end{defi}

Note that we can recover the GKZ-hypergeometric $\cD$-module $\cM_{A}^{\beta}$ from Definition \ref{def:GKZ} by restricting $\wh\cM_A^{(\beta_0,\beta)}$ to $z=1$. In the special case of our original matrix from Corollary \ref{coro:hypGKZ} the generators of $\cI$ are
$$\begin{array}{c}
(z\partial_{\lambda_1})\cdot\ldots\cdot(z\partial_{\lambda_n})- (z\partial_{\lambda_{n+1}})\cdot\ldots\cdot(z\partial_{\lambda_{n+m}}),\\
z^2\partial_z+\lambda_1z\partial_{\lambda_1}+\ldots+\lambda_{n+m}z\partial_{\lambda_{n+m}}-z\beta_0, \\
\lambda_1z\partial_{\lambda_1}+\lambda_{n+i}z\partial_{\lambda_{n+i}}-z\beta_i,\; i=1,\ldots,m,\\
\lambda_1z\partial_{\lambda_1}-\lambda_iz\partial_{\lambda_i}+z\alpha_i,\; i=2,\ldots,n.
\end{array}$$

Moreover, we must also consider the corresponding $\cR$-module for hypergeometric $\cD$-modules. Both kinds of $\cR$-modules will play a significant role in the proof of Theorem \ref{thm:HypIrrMHM}.

\begin{defi}\label{def:H_Hut}
Let $(n,m)\neq(0,0)$ be a pair of natural numbers, and let $\alpha_1,\ldots,\alpha_n$ and $\beta_1,\ldots,\beta_m$ be elements of $\CC$. The (classical) hypergeometric $\cR$-module (of type $(n,m)$) associated with the $\alpha_i$ and the $\beta_j$, denoted by $\wh\cH(\alpha_i;\beta_j)$, is defined as the quotient of $\cRint_{\AA_z^1\times\gmt}$ by the left ideal generated by
$$
P=z^2\partial_z+(n-m)tz\partial_t+\gamma z \,\text{ and }\,H=\prod_{i=1}^n z(t\partial_t-\alpha_i)-t\prod_{j=1}^m z(t\partial_t-\beta_j),
$$
where $\gamma=-\sum_{i=1}^n \alpha_i+\sum_{j=1}^m\beta_j$.
\end{defi}

The choice of the operator $P$ may seem odd, but as we will see, it is indeed very natural. In fact, we have the following extension of Corollary \ref{coro:hypGKZ} to the realm of $\cR$-modules.
\begin{lemma}\label{lem:RhypGKZ}
Let $\wh\cH(\alpha_i;\beta_j)$ be a classical hypergeometric $\cR_{\AA_z^1\times\gmt}$-module of type $(n,m)$ with $n>0$ and $\alpha_1=0$. Let $A\in\operatorname{M}((m+n-1)\times(n+m),\ZZ)$, $\beta\in\CC^{n+m-1}$ and $\iota:\gmt\hra\GG_m^{n+m}$ be as in the statement of Corollary \ref{coro:hypGKZ}. Then
$$\wh\cH(\alpha_i;\beta_j)\cong\iota^+\wh\cM_A^{(0,\beta)}.$$
\end{lemma}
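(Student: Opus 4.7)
The plan is to imitate the proof of Corollary \ref{coro:hypGKZ} at the level of $\cR$-modules, performing the dimensional reduction directly on the explicit presentation of $\wh\cM_A^{(0,\beta)}$ that was written out right after Definition \ref{def:GKZ}. First, I would view $\iota$ as the closed embedding cut out by the ideal $(\lambda_2 - 1,\ldots,\lambda_{n+m} - 1)$ and argue that the inverse image of the cyclic $\cRint_{\AA^1_z \times \GG_m^{n+m}}$-module $\wh\cM_A^{(0,\beta)}$ can be computed as the naive quotient by this ideal, because the Euler-type relations render the embedding non-characteristic with respect to the canonical generator. Under this restriction one has $\lambda_1 = t$ and $\lambda_1 z\partial_{\lambda_1} = t z\partial_t$.

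Next, I would use the two families of Euler relations (the ones coming from the rows of the matrix $A$) to eliminate the auxiliary operators $z\partial_{\lambda_i}$ for $i\geq 2$. After restriction these produce the explicit formulas
$$
z\partial_{\lambda_i} = tz\partial_t - z\alpha_i \quad (i = 2,\ldots,n), \qquad z\partial_{\lambda_{n+j}} = z\beta_j - tz\partial_t \quad (j = 1,\ldots,m),
$$
where the convention $\alpha_1 = 0$ extends the first identity trivially to $i = 1$, giving $z\partial_{\lambda_1} = z\partial_t$. Substituting into the toric binomial $(z\partial_{\lambda_1})\cdots(z\partial_{\lambda_n}) - (z\partial_{\lambda_{n+1}})\cdots(z\partial_{\lambda_{n+m}})$, multiplying by $t$, and collecting terms yields $\prod_{i=1}^n(tz\partial_t - z\alpha_i) - (-1)^m\,t\prod_{j=1}^m(tz\partial_t - z\beta_j)$. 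Up to the sign $(-1)^m$, which is absorbed by the coordinate change $t \mapsto (-1)^m t$ already implicit in going from Proposition \ref{prop:hypGKZ} to Corollary \ref{coro:hypGKZ}, this is precisely the hypergeometric operator $H$ of Definition \ref{def:H_Hut}.

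It then remains to treat the extra relation $z^2\partial_z + \sum_{j=1}^{n+m} \lambda_j z\partial_{\lambda_j}$ (recall $\beta_0 = 0$), which is the genuinely new ingredient in the $\cR$-module setting compared to the $\cD$-module case of Corollary \ref{coro:hypGKZ}. Substituting the formulas above and telescoping gives
$$
z^2\partial_z + tz\partial_t + \sum_{i=2}^n (tz\partial_t - z\alpha_i) + \sum_{j=1}^m (z\beta_j - tz\partial_t) \;=\; z^2\partial_z + (n-m)tz\partial_t + \gamma z,
$$
with $\gamma = -\sum_i \alpha_i + \sum_j \beta_j$, which is exactly the operator $P$. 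Combining these identifications, $\iota^+\wh\cM_A^{(0,\beta)}$ is presented as the quotient of $\cRint_{\AA^1_z\times\gmt}$ by $(P,H)$, i.e.\ $\wh\cH(\alpha_i;\beta_j)$.

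The main obstacle I anticipate is the very first step: rigorously justifying that $\iota^+\wh\cM_A^{(0,\beta)}$ is indeed the naive quotient described, with no further relations beyond those produced by the elimination. One way to handle this is a direct non-characteristicness check for $\iota$ with respect to the characteristic variety of $\wh\cM_A^{(0,\beta)}$; a perhaps cleaner alternative is to set $z=1$, invoke Corollary \ref{coro:hypGKZ} to obtain the identification at the $\cD$-module level, and then lift it by exploiting that both $\wh\cH(\alpha_i;\beta_j)$ and the candidate $\iota^+\wh\cM_A^{(0,\beta)}$ are generated in $\cRint$-degree zero with analogous presentations, so that the surjection $\wh\cH(\alpha_i;\beta_j) \twoheadrightarrow \iota^+\wh\cM_A^{(0,\beta)}$ constructed from the computation above must be an isomorphism.
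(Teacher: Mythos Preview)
Your proposal is correct and follows essentially the same computation as the paper: use the Euler-type relations to eliminate the auxiliary operators $z\lambda_i\partial_{\lambda_i}$ for $i\geq 2$, substitute into the single box operator to obtain $H$ (up to the sign absorbed by $t\mapsto (-1)^m t$), and substitute into the $z^2\partial_z$-relation to obtain $P$.

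The one place where the paper is more economical is precisely your ``main obstacle''. Rather than arguing non-characteristicness or lifting from $z=1$, the paper observes (citing \cite[\S2.1.6.2]{Mo13}) that for $\cR$-modules the inverse image $\iota^+$ is literally the $\cO$-module pullback $(\id_{\AA^1_z}\times\iota)^*$. The trick is then to perform the elimination \emph{before} restricting: since the $\lambda_i$ are invertible on $\GG_m^{n+m}$, the Euler relations let one rewrite $\wh\cM_A^{(0,\beta)}$ as a cyclic module over $\cO_{\GG_m^{n+m}}\langle z^2\partial_z, z\lambda_1\partial_{\lambda_1}\rangle$, i.e.\ with all differential operators in the $\lambda_1$- and $z$-directions only. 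Once the module is presented this way, the $\cO$-module pullback along $\iota$ is transparently just setting $\lambda_1=t$ and $\lambda_i=1$ for $i\geq 2$ in the coefficients, and no further justification is needed. Your order of operations (restrict first, then eliminate) reaches the same answer but makes the bookkeeping of ``no extra relations'' less immediate; reversing the order dissolves the obstacle.
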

\begin{proof}
The inverse image functor in the category of $\cR$-modules is induced by the usual inverse image functor of $\cO$-modules, $(\id_{\AA_z^1}\times\iota)^*$ in this case (cf. \cite[\S2.1.6.2]{Mo13}). Then it is easy to see that
$$
\wh\cH(\alpha_i;\beta_j)\cong\iota^+\wh\cM_A^{(0,\beta)}.
$$
Namely, we replace $z\lambda_i\dd_{\lambda_i}$ by $z\lambda_1\dd_{\lambda_1}-z\alpha_i$ if $i=2,\ldots,n$ or by $-z\lambda_1\dd_{\lambda_1}+z\beta_{i-n}$, if $i=n+1,\ldots,n+m$. Since we can invert $\lambda_i$ in $\cRint_{\AA^1_z\times \GG_m^n}$, we
present $\wh\cM_A^{(0,\beta)}$ as the $\cO_{\gm^{n+m}}$-module $\cO_{\gm^{n+m}}\langle z^2\dd_z,z\lambda_1\dd_{\lambda_1}\rangle/\cJ$, where $\cJ$ is generated by $$\lambda_{n+1}\cdot\ldots\cdot\lambda_{n+m}\prod_{i=1}^nz(\lambda_1\dd_{\lambda_1}-\alpha_i)- (-1)^m\lambda_1\cdot\ldots\cdot\lambda_n\prod_{i=1}^mz(\lambda_1\dd_{\lambda_1}-\beta_i)\,\text{ and }\, z^2\dd_z+(n-m)z\lambda_1\dd_{\lambda_1}+\gamma z.$$
Now the inverse image by $\iota$ amounts simply to set $\lambda_1=t$ and $\lambda_i=1$ for $i=2,\ldots,{n+m}$ in the generators of the ideal, from which the desired isomorphism follows, up to multiplying $t$ by $-1$.
\end{proof}

We can formulate at this point one of the main results of this paper. Its full proof will occupy the entire next section.
\begin{thm}\label{thm:HypIrrMHM}
Let $\alpha_1,\ldots,\alpha_n$ be real numbers, belonging to the interval $[0,1)$ and increasingly ordered, and let $\gamma=-\sum_{i=1}^n \alpha_i$. Then the $\cR_{\AA^1_z\times\gmt}$-module $\wh\cH:=\wh\cH(\alpha_i,\emptyset)=\cRint_{\AA^1_z\times \gmt}/(P,H)$, where
$$
P=z^2\partial_z+ntz\partial_t+\gamma z \,\text{ and }\,H=\prod_{i=1}^n z(t\partial_t-\alpha_i)-t,
$$
underlies an irregular Hodge module, i.e., an object of $\IrrMHM(\gmt)$. It is the unique
irregular Hodge module whose associated $\cD_{\gmt}$-module is $\cH(\alpha_i;\emptyset)$. Moreover, $\wh\cH$ can be extended in a unique way to an $\cRint_{\AA^1_z\times \PP^1}$-module, $\wh\cH_{pr}$, such that it underlies an object of $\IrrMHM\left(\PP^1\right)$.
\end{thm}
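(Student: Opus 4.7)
The plan is to obtain the $\IrrMHM$ structure on $\wh\cH$ by transporting it from a higher-dimensional GKZ $\cR$-module via Lemma \ref{lem:RhypGKZ}, then to extend the result from $\gmt$ to $\PP^1$ by a middle extension, and finally to use irreducibility and rigidity for the two uniqueness claims.

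First I would set $m=0$ and take $A\in\operatorname{M}((n-1)\times n,\ZZ)$ and $\beta=(\alpha_2,\ldots,\alpha_n)^{\operatorname{t}}$ as in Corollary \ref{coro:hypGKZ}, so that Lemma \ref{lem:RhypGKZ} gives $\wh\cH\cong\iota^+\wh\cM_A^{(0,\beta)}$, with $\iota:\gmt\hookrightarrow \GG_m^n$. It therefore suffices to exhibit an $\IrrMHM$ lift of the ambient GKZ $\cR$-module and then to pull it back. To obtain the lift, I would identify $\wh\cM_A^{(0,\beta)}$ with the partial Fourier--Laplace transform of a relative Gauss--Manin system associated to a family of Laurent polynomials on the torus, twisted by an $\cO^\alpha$-module encoding the parameter $\beta$; this is precisely the point of view developed in \cite{Sa8,Reich2,ReiSe,ReiSe2}. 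The source of the Fourier--Laplace transform is a complex mixed Hodge module, so its transform sits in $\MTMint(\GG_m^n)$ by Mochizuki's stability theorem. Upgrading to $\IrrMHM$ requires verifying that $\ttheta\wh\cM_A^{(0,\beta)}$ is again in $\MTMint(\ttheta\GG_m^n)$, that it is tame along $\{\tau=0\}$, and that the $z$-grading condition of \cite[Def.~2.26]{Sa15} holds; these should follow by analysing the rescaled Laurent polynomial family, which is well behaved for the Newton polytope associated to our matrix $A$.

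Once this is done, Step 3 is the pullback. The embedding $\iota$ is a closed immersion of a torus, and for the explicit generators of $\cI$ written down after Lemma \ref{lem:RhypGKZ} the pullback is computed at the level of $\cR$-modules simply by setting $\lambda_1=t$ and $\lambda_i=1$ for $i\geq 2$, which preserves the $\IrrMHM$ structure (the embedding is non-characteristic for the GKZ system). This yields an object of $\IrrMHM(\gmt)$ whose underlying $\cR_{\AA_z^1\times\gmt}$-module is $\wh\cH$. For uniqueness on $\gmt$: since $m=0$, the condition of Proposition \ref{prop:IrredHyp} is vacuous so $\cH(\alpha_i;\emptyset)$ is irreducible, and by Proposition \ref{prop:RigidHyp} it is rigid; by the general formalism of \cite{Sa15}, an $\IrrMHM$ lift of an irreducible rigid $\cD$-module is unique up to Tate twist, and the specific form of the operator $P$ (the normalisation $\gamma=-\sum\alpha_i$) pins down the twist.

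Finally, for the extension to $\PP^1$, let $j:\gmt\hookrightarrow\PP^1$ be the open inclusion. The category $\IrrMHM$ is stable under the middle extension functor $j_{\dag+}$, so $\wh\cH_{pr}:=j_{\dag+}\wh\cH$ defines an object of $\IrrMHM(\PP^1)$; its underlying $\cD$-module is the middle extension of $\cH(\alpha_i;\emptyset)$, which is again irreducible and rigid, so the same uniqueness argument as on $\gmt$ applies.

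The main technical obstacle will be Step 2, namely the verification that $\wh\cM_A^{(0,\beta)}$ actually lies in $\IrrMHM(\GG_m^n)$ and not merely in $\MTMint(\GG_m^n)$: the rescaling, tameness, and grading properties need a concrete handle on the canonical $V$-filtration along $\{\tau=0\}$ of the rescaled Gauss--Manin system. It is precisely here that the explicit solution of the Birkhoff problem inspired by toric mirror symmetry (cf.\ \cite{ReiSe,DS2,dGMS}) intervenes.
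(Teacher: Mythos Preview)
Your proposal has the right starting point (Lemma \ref{lem:RhypGKZ} and the Fourier--Laplace/Gauss--Manin interpretation of the GKZ $\cR$-module), but the core technical strategy diverges from the paper's, and the place where you diverge is precisely the place you yourself flag as the ``main technical obstacle''.

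You propose to show that $\wh\cM_A^{(0,\beta)}$ lies in $\IrrMHM(\GG_m^n)$ by directly verifying the rescaling, tameness, and $z$-grading conditions of \cite[Def.~2.50, 2.26]{Sa15}, and then to pull back by $\iota$. The paper does \emph{not} do this. Instead it works entirely at the reduced level $\Sred=\gmt$ and identifies $\wh\cH$ with an object that is \emph{a priori} known to be in $\IrrMHM$: namely $G_0^{F^H}\FL^{\loc}_{\gmt}(M_{\dag+})$, where $M_{\dag+}=\cH^0{'\!}p_{2,+}l_{\dag+}\cO^\beta_{S_1\times\gmt}$ underlies a pure polarizable complex Hodge module (being a projective pushforward of an intersection cohomology module). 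The general machinery (Lemma \ref{lem:MHMIntoIrrMHMbyFL}, ultimately \cite[Cor.~0.5]{Sa15}) then guarantees that $G_0^{F^H}\FL^{\loc}_X(N)$ lies in $\IrrMHM(X)$ whenever $(N,F)$ underlies a mixed Hodge module; no direct verification of the rescaling conditions is needed. The real work is therefore \emph{not} checking $\IrrMHM$ axioms but rather proving the equality
\[
G_0^{F^{H_{sh}}}\FL^{\loc}_{\gmt}(M_{\dag+}) \;=\; G_0^{F}\FL^{\loc}_{\gmt}(M)
\]
of lattices inside $\FL^{\loc}_{\gmt}(M)\cong\FL^{\loc}_{\gmt}(M_{\dag+})$, where the right-hand side is the explicit Brieskorn lattice that one can compute to be $\wh\cH$ (Theorem \ref{thm:EqualFourierFilt}). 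This in turn rests on two ingredients you do not mention: the Radon-transform comparison $\FL^{\loc}M\cong\FL^{\loc}M_{\dag+}$ of Proposition \ref{prop:IsoAfterFourier} (which is where the d'Agnolo--Eastwood theorem enters), and the microlocal argument from \cite{Sa8} identifying the two lattices after formal completion along $z=0$, using that the boundary contributions vanish by the non-characteristic property (Lemma \ref{lem:StratSmooth}). Your reference to the ``Birkhoff problem'' is too vague to substitute for this.

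On the uniqueness and extension claims your outline is close in spirit, though the paper's logic runs slightly differently: uniqueness on $\gmt$ comes from faithfulness of $\Xi_{\text{DR}}$ plus irreducibility of $\cH$ (no rigidity needed there), while the extension $\wh\cH_{pr}$ is obtained not by applying $j_{\dag+}$ in $\IrrMHM$ to $\wh\cH$, but by first taking the $\cD$-module $j_{\dag+}\cH$, lifting it to a pure twistor module via Mochizuki, and then invoking rigidity via \cite[Thm.~0.7]{Sa15} to land in $\IrrMHM(\PP^1)$; uniqueness on $\gmt$ then forces $j^+\wh\cH_{pr}\cong\wh\cH$. Your order of reduction (first $\alpha_1=0$, then tensor with a Kummer module) is, however, exactly what the paper does.
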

\begin{proof}[Proof of unicity]
We will give here a proof of the two unicity statements in the above theorem, postponing the proof of the main statement to page \pageref{page:MainProof} below.

Consider any twistor $\cD$-module $\wh\cH'$ on $\gmt$ whose underlying $\cD_{\GG_{m,t}}$-module is $\cH$. Since the functor $\Xi_{\text{DR}}$ is faithful by \cite[Rem. 7.2.9]{Mo13}, we have an injection of Hom groups
$$\Hom_{\MTM(\GG_{m,t})}(\wh\cH,\wh\cH')\hra\Hom_{\cD_{\GG_{m,t}}}(\cH,\cH).$$
But $\cH$ is irreducible, so its only endomorphism is the identity and then, a twistor $\cD$-module underlying $\cH$ is unique, if it exists.

On the other hand, let $j:\GG_{m,t}\hra\PP^1$ be the canonical inclusion and consider the $\cD_{\PP^1}$-module $\cH_{pr}:=j_{\dag+}\cH$. It is an irreducible holonomic $\cD_{\PP^1}$-module, because so is $\cH$ by Proposition \ref{prop:IrredHyp}. Then it gives rise to a unique pure integrable twistor $\cD$-module $\wh\cH_{pr}$ on $\PP^1$ by \cite[Thm. 1.4.4]{Mo5} and \cite[Rem. 1.40]{Sa15}. In addition, its underlying $\cD_{\PP^1}$-module $\cH_{pr}$ is rigid by virtue of Proposition \ref{prop:RigidHyp}. As a consequence, we can invoke \cite[Thm. 0.7]{Sa15} and claim that such twistor $\cD$-module on $\PP^1$ is in fact an object of $\IrrMHM(\PP^1)$. Take now $\wh\cH':=j^+\wh\cH_{pr}$, which is an irregular mixed Hodge module whose underlying $\cD_{\GG_{m,t}}$-module is $\cH$, by \cite[Prop. 14.1.24]{Mo13}. The we must have, as was just shown, $\wh{\cH}'\cong \wh{\cH}$, so that the extension $\wh{\cH}_{pr}$ of $\wh{\cH}$ is unique, as claimed.
\end{proof}

\label{page:Roadmap}
The main point in the above theorem is that $\wh{\cH}$ underlies an irregular mixed Hodge module. Since the proof of this fact is rather long, and will be carried out in the next section through various intermediate results, we would like to orient the reader by giving here an overview
of these steps. We will restrict the sketch to the case where $\alpha_1=0$, this is also the first (and main) step in the
actual proof below on page \pageref{page:MainProof}. The general case can be rather easily deduced from this special one by considering Kummer $\cD$- resp. $\cR$-modules.

The first point is to realise $\wh{\cH}$ in a geometric way. For this purpose, consider the following two
families of Laurent polynomials
$$
f(y_1,\ldots,y_{n-1},\lambda_1,\ldots,\lambda_n):= -\lambda_1 \cdot y_1\cdot\ldots\cdot y_{n-1}-\frac{\lambda_2}{y_1}-\ldots-\frac{\lambda_n}{y_{n-1}}
$$
and
$$
{\,'\!}f(y_1,\ldots,y_{n-1},t):=-t\cdot y_1\cdot\ldots\cdot y_{n-1}-\frac{1}{y_1}-\ldots-\frac{1}{y_{n-1}}.
$$
where $y_k,\lambda_i, t\in \GG_m$. Write $\iota: \gmt\hookrightarrow \GG_m^n$, $t\mapsto (t,1,\ldots,1)$, so that we
have the cartesian diagram
$$
\begin{tikzcd}
\GG_m^{n-1}\times \gmt \ar{rr} \ar[swap]{dd}{{\,'\!}f\times \pi_2} & & \GG_m^{n-1}\times \GG_m^n \ar{dd}{f\times \pi_2} \\ \\
\AA^1_{\lambda_0}\times \gmt \ar{rr}{\id_{\AA^1_{\lambda_0}}\times \iota} && \AA^1_{\lambda_0}\times \GG_m^n
\end{tikzcd}
$$
where we denote the coordinate on the affine line corresponding to the value of $f$ resp. of ${\,'\!}f$ by $\lambda_0$,
for reasons that will become clear later.

We consider the so-called twisted cohomology groups associated to the morphisms $f$ and ${\,'\!}f$. It can be shown (see Proposition \ref{prop:TwdeRham} below) that
$$
\wh{\cM}^{(0,\alpha)}_A\cong\cH^{n-1}\left(\pi_{2,*}\Omega_{\GG_m^{n-1}\times \GG_m^n/\GG_m^n}^{\bullet+d}[z],z\left(d-\kappa(\alpha)\wedge\right)-df\wedge\right),
$$
where $(0,\alpha)=(0,\alpha_1,\alpha_2,\ldots,\alpha_n)=(0,0,\alpha_2,\ldots,\alpha_n)$,
where $\kappa(\alpha)=\sum_{j=1}^{n-1} \alpha_{j+1} dy_j/y_j$ and $A$ is the matrix from Corollary \ref{coro:hypGKZ} for the
case $m=0$.
Moreover, since the twisted cohomology groups involve complexes of relative differential forms, we have
$$
\begin{array}{c}
\displaystyle \iota^+ \cH^{n-1}\left(\pi_{2,*}\Omega_{\GG_m^{n-1}\times \GG_m^n/\GG_m^n}^{\bullet+d}[z],z\left(d-\kappa(\alpha)\wedge\right)-df\wedge\right) \\ \\
\cong
\displaystyle \cH^{n-1}\left(\pi_{2,*}\Omega_{\GG_m^{n-1}\times \gmt/\gmt}^{\bullet}[z],z\left(d-\kappa(\alpha)\wedge\right)-d{\,'\!}f\wedge\right),
\end{array}
$$
so that by using Lemma \ref{lem:RhypGKZ} we obtain an isomorphism of $\cRint_{\AA^1_z\times\gmt}$-modules
\begin{equation}\label{eq:IdentRMod}
\wh{\cH}
\cong
\cH^{n-1}\left(\pi_{2,*}\Omega_{\GG_m^{n-1}\times \gmt/\gmt}^{\bullet}[z],z\left(d-\kappa(\alpha)\wedge\right)-d{\,'\!}f\wedge\right).
\end{equation}
As a second step, we will realize the right hand side of the above isomorphism in a different way. Namely, write
${\,'\!}\varphi:=({\,'\!}f,\pi_2):\GG_m^{n-1}\times\gmt\rightarrow\AA^1_{\lambda_0}\times\gmt$ and consider the direct image complex
${\,'\!}\varphi_+\cO_{\GG_m^{n-1}}^\alpha\in \Dbrh(\cD_{\AA^1_{\lambda_0}\times\gmt})$, where
$\cO_{\GG_m^{n-1}}^\alpha=\cD_{\GG_m^{n-1}}/(y_j\partial_{y_j}+\alpha_{j+1}+1)_{j=1,\ldots,n-1}$. We are interested
in the top cohomology of this complex. Standard techniques for the calculation
of direct images of $\cD$-modules show that it is given by
$$
M:=\frac{\pi_{2,*}\Omega_{\GG_m^{n-1}\times \gmt/\gmt}^{n-1}[\partial_{\lambda_0}]}{(d-\partial_{\lambda_0} \cdot d{\,'\!}f\wedge)\pi_{2,*}\Omega_{\GG_m^{n-1}\times \gmt/\gmt}^{n-2}[\partial_{\lambda_0}]}
$$
We will use a variant of the Fourier-Laplace transformation (called localized partial Fourier-Laplace transformation,
and denoted by $\FL^{\loc}_{\gmt}$,
see Definition \ref{def:FL} and Definition \ref{def:plocFL} below) exchanging the operator $\partial_{\lambda_0} \cdot $
into $z^{-1} \cdot $, the operator $\lambda_0 \cdot$ into $z^2\partial_z \cdot $, and localizing along $z=\infty$. Then we have
an isomorphism of $\cD_{\AA^1_z\times \gmt}$-modules
$$
\FL^{\loc}_{\gmt}(M) =
\frac{\pi_{2,*}\Omega_{\GG_m^{n-1}\times \gmt/\gmt}^{n-1}[z^\pm]}{(z\cdot d- d{\,'\!}f\wedge)\pi_{2,*}\Omega_{\GG_m^{n-1}\times \gmt/\gmt}^{n-2}[z^\pm]}
\cong
\wh{\cH}[z^\pm] \supset \wh{\cH}.
$$
One of the main points of the proof in the next section is to give a good description of the image of $\wh{\cH}$
inside $\FL^{\loc}_{\gmt}(M)$ under this isomorphism. One such description is given by the isomorphism of $\cRint_{\AA^1_z\times \gmt}$-modules in the displayed formula \eqref{eq:IdentRMod}. However, we cannot, a priori, obtain any Hodge theoretic
information on $\wh{\cH}$ from \eqref{eq:IdentRMod}. On the other hand, we know that $M$ underlies an algebraic complex mixed Hodge module on $\AA^1_{\lambda_0 \times \gmt}$ (since it is the direct image of such an object on $\GG_m^{n-1}$), and hence it comes equipped with a certain good filtration $F^H_\bullet M$ (the Hodge filtration).
There is a general procedure, explained below in Definition \ref{def:G0F} and Lemma \ref{lem:MHMIntoIrrMHMbyFL}, which constructs,
given a filtered $\cD_{\AA_{\lambda_0}\times \gmt}$-module $(N,F_\bullet)$,
a $\cRint_{\AA^1_z\times \gmt}$-module called $G_0^F \FL^{\loc}_{\gmt} N$ such that its localisation
$G_0^F \FL^{\loc}_{\gmt} N \otimes_{\cO_{\AA^1_z\times \gmt}} \cO_{\AA^1_z\times \gmt}[z^{-1}]$ equals
$\FL^{\loc}_{\gmt} N$. Then we show in Theorem \ref{thm:EqualFourierFilt} that
$$
\wh{\cH} \cong G_0^{F^H} \FL^{\loc}_{\gmt} M
$$
up to a shift of the Hodge filtration. Actually, the proof is not that direct, since we have
to identify $\FL^{\loc}_{\gmt}(M)$ with the localized partial Fourier-Laplace transformation of some other $\cD_{\AA^1_{\lambda_0}\times \gmt}$-module (called $M_{\dag+}$), which underlies
a pure polarizable Hodge module. It is constructed by taking a compactification
of ${\,'\!}f$, i.e., a projective morphism defined on a quasi-projective (usually singular)
variety constructed from the toric compactification of $\GG_m^{n-1}$ inside $\PP^n$.
Then $M_{\dag+}$ is obtained as the direct image under this projective morphism of
a certain intersection cohomology module. Now it is known (see \cite[Cor. 0.5]{Sa15}) that if
$M_{\dag+}$ underlies a pure polarizable Hodge module, the $\cRint_{\AA^1_z\times \gmt}$-module
$G_0^{F^H}\FL^{\loc}_{\gmt}M_{\dag+}$ underlies an irregular Hodge module,
which finishes the proof. Notice that the proof of the identification $\FL^{\loc}_{\gmt}(M)\cong
\FL^{\loc}_{\gmt}(M_{\dag+})$ is derived from a similar isomorphism for the direct images of the
morphisms $\varphi=(f,\id_{\GG_m^n})$ resp. its compactification, rather than for ${\,'\!}\varphi$, and is done via
the formalism of Radon transformations for regular holonomic $\cD$-modules (in the same way as in \cite{Reich2}
and \cite{ReiSe2, ReiSe3}).

\section{Hodge modules and Fourier-Laplace transformation}

Let $\alpha_1,\ldots,\alpha_n$ be real numbers, and consider the hypergeometric $\cD_{\gm}$-module $\cH=\cH(\alpha_i;\emptyset)$.
As we have mentioned before, the goal of this section is to prove Theorem \ref{thm:HypIrrMHM} above, showing that the $\cR$-module $\wh\cH:=\wh\cH(\alpha_i;\emptyset)$ from Definition \ref{def:H_Hut} underlies an object of $\IrrMHM(\gm)$ (the abelian category of exponential Hodge modules as defined in \cite{Sa15}, see the introduction). We have already indicated several times
that we will use the GKZ-hypergeometric $\cR$-module $\wh\cM_A^{(0,\alpha)}$ for the matrix $A$ from corollary \ref{coro:hypGKZ},
but where $m=0$. However, we will start with a more general situation, and specify the assumptions we need when going on with the proof.

Let $d<n$ be two positive integers, and take a parameter vector $\beta\in\CC^d$. For the Hodge theoretic questions we are interested in, only real parameter vectors are relevant, but we will work with this
more general setting until Remark \ref{rem:realExponents} below.
Let $A=(\underline{a}_1,\ldots,\underline{a}_n)\in \text{M}(d\times n,\ZZ)$ be an integer matrix satisfying the following:

\begin{assump}\label{assump:AssMatrix}
$\mbox{}$
\begin{enumerate}[(i)]
\item $\ZZ A =\ZZ^d$, here $\ZZ A:=\sum_{i=1}^n \ZZ \underline{a}_i$,
\item Let $\Delta:=\Conv(\underline{a}_1,\ldots,\underline{a}_n)$
be the convex hull in $\RR^d$ of the vectors given by the columns of the matrix $A$. Then for any proper face $\Gamma\subset \Delta$ we require
that the set $\{\underline{a}_i \,|\, \underline{a}_i\in \Gamma\}$ is part of a $\QQ$-basis of $\QQ^d$.
\item The origin lies in the interior of $\Delta$.
\end{enumerate}
\end{assump}

\begin{rem}\label{rem:AssMatrix}
\begin{enumerate}[(i)]
\item These assumptions are in particular satisfied if $\underline{a}_1,\ldots,\underline{a}_n$ are the primitive integral generators
of the rays of the fan $\Sigma$ defining a toric Fano orbifold $X_\Sigma$, since in this case it is known (see \cite[Lem. 3.2.1 and \S~3.5]{CK}) that
$\Sigma$ is the union of the cones over the proper faces of $\Delta$, so that assumption (ii) is satisfied by the fact that the cones of $\Sigma$ are simplicial. Moreover, for Fano toric varieties the origin is the \emph{only} integer point in the interior of $\Delta$, so (iii) obviously holds. In any case, we see that these conditions are satisfied
for the example
\begin{equation}\label{eq:matAProjSpace}
A=
\begin{pmatrix}
1 & -1 & 0 & 0 &\ldots  & 0  \\
1 &  0 & -1 & 0 & \ldots & 0 \\
\vdots &\ldots \\
1 &  0 & 0 & 0 & \ldots & -1
\end{pmatrix}
\in \text{M}\left((n-1) \times n, \ZZ\right),
\end{equation}
since these are the generators of the rays of the fan of $\PP^{n-1}$. As we have seen in Corollary \ref{coro:hypGKZ}, this is the matrix
we have to look at when we want to express the classical hypergeometric $\cD$-module of type $(n,0)$ and where $\alpha_0=0$ as an inverse image of the GKZ-system $\cM_A^\beta$
where $\beta=\alpha:=(\alpha_2,\ldots, \alpha_n)^{\text{t}}\in \RR^{n-1}$.
\item
Assumption (iii) from above implies in particular that there is a relation $\underline{l}=(l_1,\ldots,l_n) \in \textup{ker}(A)\subset \ZZ^n$ such
that $l_i>0$ for $i=1,\ldots, n$. It follows then from assumption (i) that the semi-group $\NN A=\sum_{i=1}^{n} \NN \underline{a}_i$ equals $\ZZ^d$ (since for any $\underline{c}\in \ZZ^d$, a linear combination
$\underline{c}=\sum_{i=1}^{n} n_i \underline{a}_i$ with integer coefficients $n_i$ can be turned into a combination
with positive coefficents by adding the vector $\underline{0}=\sum_{i=1}^{n} l_i \underline{a}_i$ sufficiently many times). This fact will be used later (see the proof of Proposition \ref{prop:TwdeRham}).
\end{enumerate}
\end{rem}

Let $S_1=\gm^d=\Spec(\CC[y_1^\pm,\ldots,y_d^\pm])$ and $S_2=\gm^n=\Spec(\CC[\lambda_1^\pm,\ldots,\lambda_n^\pm])$ be two algebraic tori, and consider the affine space $V=\AA^{n+1}$ with coordinates $\lambda_0,\lambda_1,\ldots,\lambda_n$. Let $V^\vee$ be the dual space with coordinates $w_0,w_1,\ldots,w_n$, and we also set $\tau:=-w_0$ and $z:=\tau^{-1}$. We decompose $V=\AA^1_{\lambda_0}\times W$, and consider $S_2\subset W$ as an open subset.

Consider the following family of Laurent polynomials
\begin{equation}\label{eq:TwdeRham}
\begin{array}{rcl}
\varphi: S_1 \times S_2 & \longrightarrow & \AA^1_{\lambda_0}\times S_2 \\
\left(\uy,\ulambda\right) & \longmapsto & \displaystyle \left(-\sum_{i=1}^n \lambda_i \uy^{\underline{a}_i},\lambda_1,\ldots,\lambda_n\right)
\end{array},
\end{equation}
which in the case of the matrix from equation \eqref{eq:matAProjSpace} becomes
\begin{equation}\label{eq:TwdeRham-concrete}
\varphi(y_1,\ldots,y_{n-1},\lambda_1,\ldots,\lambda_n)=\left(-\lambda_1 \cdot y_1\cdot\ldots\cdot y_{n-1}-\frac{\lambda_2}{y_1}-\ldots-\frac{\lambda_n}{y_{n-1}},\lambda_1,\ldots,\lambda_n\right).
\end{equation}
Write $f:S_1\times S_2\rightarrow \AA_{\lambda_0}^1$ for the composition of $\varphi$ with the first projection $\AA_{\lambda_0}^1\times S_2\rightarrow\AA^1_{\lambda_0}$. Similarly, for any fixed $\underline{\lambda}\in S_2$,
we write $f_{\underline{\lambda}}:=f_{|S_1\times\{\underline{\lambda}\}}:S_1\rightarrow \AA^1_{\lambda_0}$ for the restriction of $f$ to the parameter value $\underline{\lambda}$.
Let us first quote the following statement from \cite[Lemma 2.8.]{ReiSe}. Since the input data in loc. cit. are fans
of toric varieties, we will copy the proof here to make it fit the assumptions above. Recall (see \cite{Kouch}) that
a Laurent polynomial $f_{\underline{\lambda}}=\sum_{i=1}^{n} \lambda_i \underline{y}^{\underline{a}_i}$ is called
convenient if $0$ lies in the interior of $\Delta$ and non-degenerate if
for all proper faces $\delta\subset \Conv(0,\underline{a}_1,\ldots,\underline{a}_n)$ not containing the origin, the Laurent polynomial $f^\delta_{\underline{\lambda}} =
\sum_{i:\underline{a}_i\in \delta} \lambda_i \underline{y}^{\underline{a}_i}$ has no critical points in $S_1$.
Notice that for matrices $A$ satisfying assumption (iii) from above, this last condition is equivalent to asking that $f_{\underline{\lambda}}^\delta$ is non-singular for
all proper faces $\delta\subset \Delta$.
\begin{lemma}\label{lem:NonDegen}
The Laurent polynomial $f_{\underline{\lambda}}:S_1\rightarrow \AA^1_z$ is non-degenerate and convenient for any $\underline{\lambda}\in S_2$.
\end{lemma}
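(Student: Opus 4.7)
The plan is to reduce the statement to a direct verification using the three assumptions on $A$. Convenience is immediate: by Assumption \ref{assump:AssMatrix}(iii), the origin lies in the interior of $\Delta=\Conv(\underline{a}_1,\ldots,\underline{a}_n)$, which is precisely Kouchnirenko's convenience condition for $f_{\underline{\lambda}}$, independently of the value of $\underline{\lambda}\in S_2$. So the only real content of the lemma is non-degeneracy.

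For non-degeneracy, I would use the equivalent formulation mentioned in the excerpt: it suffices to show that for every proper face $\Gamma\subset\Delta$, the face polynomial
$$
f^\Gamma_{\underline{\lambda}}(\underline{y})\;=\;-\sum_{i:\,\underline{a}_i\in\Gamma}\lambda_i\,\underline{y}^{\underline{a}_i}
$$
has no critical point on $S_1=\gm^d$. Computing logarithmic derivatives, a critical point of $f^\Gamma_{\underline{\lambda}}$ in $\gm^d$ satisfies
$$
y_j\,\frac{\partial f^\Gamma_{\underline{\lambda}}}{\partial y_j}\;=\;-\sum_{i:\,\underline{a}_i\in\Gamma}(\underline{a}_i)_j\,\lambda_i\,\underline{y}^{\underline{a}_i}\;=\;0\qquad(j=1,\ldots,d).
$$
Assemble the columns $\underline{a}_i$ with $\underline{a}_i\in\Gamma$ into a $d\times k$ matrix $M_\Gamma$ (where $k=|\{i:\underline{a}_i\in\Gamma\}|$), and let $v\in\CC^k$ be the vector with entries $\lambda_i\,\underline{y}^{\underline{a}_i}$. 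The system above becomes the single matrix equation $M_\Gamma\,v=0$.

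The key step, where Assumption \ref{assump:AssMatrix}(ii) enters, is this: since $\Gamma$ is a proper face, the columns of $M_\Gamma$ form part of a $\QQ$-basis of $\QQ^d$ and are therefore linearly independent over $\CC$. Hence $M_\Gamma$ has trivial kernel, forcing $v=0$. But on $S_1\times S_2$ each coordinate $\lambda_i$ and each monomial $\underline{y}^{\underline{a}_i}$ is invertible, so every entry of $v$ is nonzero, a contradiction. Thus $f^\Gamma_{\underline{\lambda}}$ has no critical points in $S_1$ for any proper face $\Gamma\subset\Delta$ and any $\underline{\lambda}\in S_2$, completing the proof. I do not expect a genuine obstacle here; the argument is purely linear algebra and the only subtle point is checking that Assumption (ii) can be invoked for \emph{every} proper face $\Gamma$ (including those containing only a single $\underline{a}_i$, for which linear independence is trivial), which it does by definition.
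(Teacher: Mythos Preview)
Your proof is correct and follows essentially the same approach as the paper: both arguments use assumption~(iii) for convenience, and for non-degeneracy both compute the logarithmic derivatives $y_j\partial_{y_j}f^\Gamma_{\underline{\lambda}}$, rewrite the critical-point system as a matrix equation $M_\Gamma v=0$ with $v_i=\lambda_i\underline{y}^{\underline{a}_i}$, and invoke assumption~(ii) to conclude that $M_\Gamma$ has full column rank, forcing the contradiction $v=0$ on the torus.
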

\begin{proof}
Obviously $f_{\underline{\lambda}}$ is convenient by assumption (iii) from above.
Let $\delta\subset \Delta$ be a face of codimension $d+1-l$, with $l=1,\ldots, d$.
Let $\{i_1,\ldots,i_l\}\subset \{1,\ldots,n\}$ such that $\{\underline{a}_i\in\delta\}=\{\underline{a}_{i_1},\ldots, \underline{a}_{i_l}\}$,
notice that because of assumption (ii), we cannot have more than $l$ vectors in a face of dimension $l-1$.
Since the vectors
$\underline{a}_{i_1}, \ldots, \underline{a}_{i_l}$ are linearly independent over $\QQ$,
the matrix
$$
C:=\begin{pmatrix}
  a_{i_1 1} & \ldots & a_{i_l 1} \\
  \vdots & \vdots & \vdots \\
  a_{i_1 d} & \ldots & a_{i_l d} \\
\end{pmatrix}
$$
has full rank (equal to $l$) and hence the system
$$
C\cdot
\begin{pmatrix}
  \lambda_{i_1} \underline{y}^{\underline{a}_{i_1}} \\
  \vdots \\
  \lambda_{i_l} \underline{y}^{\underline{a}_{i_l}}
\end{pmatrix} = 0,
$$
which is the system of critical point equations $(y_k\partial_{y_k} f^\delta_{\underline{\lambda}} =0)_{k=1,\ldots,d}$,
has no nontrivial solution; the trivial one $\lambda_i \cdot \underline{y}^{\underline{a}_i}=0$ for all $i\in \{i_1,\ldots,i_l\}$ is not valid since $\underline{\lambda}\in S_2$ and we are looking for solutions $\underline{y}\in S^1$. Hence
$f_{\underline{\lambda}}^\delta$ is non-singular on $S_1$, and so $f_{\underline{\lambda}}$ is non-degenerate.
\end{proof}
From this we can deduce
the following statement, which is a variant of
\cite[Lem. 2.13]{ReiSe}. However,  we will give the proof here for the convenience of the reader.
\begin{lemma}
$\wh{\cM}_{A}^{(\beta_0,\beta)}$ is locally $\cO_{\AA^1_z\times S_2}$-free of rank $n!\cdot\operatorname{vol}(\Delta)$ (considering the normalized volume in $\RR^n$ such that $[0,1]^n$ has volume one). For the case of the matrix in equation \eqref{eq:matAProjSpace}, this rank
equals $n$.
\end{lemma}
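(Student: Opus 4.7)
The plan is to identify $\wh{\cM}_A^{(\beta_0,\beta)}$ with a relative twisted de Rham cohomology along the family $\varphi$ from \eqref{eq:TwdeRham}, thereby reducing the rank computation to Kouchnirenko's theorem on Newton polytopes. Concretely, the forthcoming Proposition \ref{prop:TwdeRham} will realize $\wh{\cM}_A^{(\beta_0,\beta)}$ as the top cohomology $\cH^d$ of $\pi_{2,*}\Omega^{\bullet+d}_{S_1\times S_2/S_2}[z]$ equipped with a twisted differential of the form $z(d-\kappa(\beta)\wedge)-df\wedge$, where $\kappa(\beta)=\sum_{k=1}^d\beta_k\,dy_k/y_k$ encodes the monodromy along the torus fibers of $\pi_2$.

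Passing to fibers at points $(z_0,\underline{\lambda}_0)\in\AA^1_z\times S_2$: for $z_0\neq 0$, a rescaling reduces the fiber to the twisted de Rham cohomology $H^d\bigl(\Omega^\bullet(S_1),\,d-df_{\underline{\lambda}_0}\wedge\bigr)$, whereas at $z_0=0$ the twisted differential collapses to wedge product with $-df_{\underline{\lambda}_0}$, so that the fiber becomes the Jacobian ring $\CC[y_1^{\pm},\ldots,y_d^{\pm}]/(y_k\partial_{y_k}f_{\underline{\lambda}_0})_{k=1,\ldots,d}$. Thanks to Lemma \ref{lem:NonDegen}, the polynomial $f_{\underline{\lambda}_0}$ is convenient and non-degenerate with Newton polytope $\Delta$ for every $\underline{\lambda}_0\in S_2$, and Kouchnirenko's formula then yields dimension $d!\cdot\operatorname{vol}(\Delta)$ in both limits. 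Since the $\cR$-module is $\cO$-coherent (a consequence of the de Rham description, once the complex is truncated using the exhaustive $z$-adic filtration) and its fiber dimension is constant, local freeness follows by Nakayama combined with upper-semicontinuity of fiber dimensions.

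For the specific matrix \eqref{eq:matAProjSpace}, the polytope $\Delta\subset\RR^{n-1}$ is the simplex with vertices $\uuno_{n-1}$ and $-e_1,\ldots,-e_{n-1}$; taking $\uuno_{n-1}$ as base point, the remaining edge vectors form the matrix $-I_{n-1}-J$ (with $J$ the all-ones matrix), whose determinant equals $(-1)^{n-1}n$ by the matrix determinant lemma. Hence $\operatorname{vol}(\Delta)=n/(n-1)!$ and the resulting rank is $(n-1)!\cdot\operatorname{vol}(\Delta)=n$, as claimed. The main obstacle I anticipate is the fiber computation at $z_0=0$: that the dimension does not jump there relies crucially on the Kouchnirenko non-degeneracy criterion verified in Lemma \ref{lem:NonDegen}, and it is precisely there that assumption (ii) on proper faces of $\Delta$ enters in an essential way.
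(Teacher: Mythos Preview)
Your overall strategy---Jacobian ring at $z=0$, twisted de Rham cohomology at $z\neq 0$, Kouchnirenko for the dimension count, then semicontinuity---is close in spirit to the paper's argument, and your computation of the simplex volume for the matrix \eqref{eq:matAProjSpace} is correct. However, there are two points that need attention.

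\medskip

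\textbf{The coherence step is a genuine gap.} You write that $\cO$-coherence of $\wh{\cM}_A^{(\beta_0,\beta)}$ is ``a consequence of the de Rham description, once the complex is truncated using the exhaustive $z$-adic filtration''. This does not work as stated: the terms $\pi_{2,*}\Omega^{l+d}_{S_1\times S_2/S_2}[z]$ of the de Rham complex are \emph{not} coherent over $\cO_{\AA^1_z\times S_2}$ (the fibres of $\pi_2$ are non-proper tori, so each term involves all of $\cO_{S_1}$), and no filtration argument on such a complex yields coherence of its cohomology directly. Coherence is exactly the heart of the matter here, and you have swept it under the rug. The paper proceeds differently: it works with the cyclic presentation of $\wh{\cM}_A^{(\beta_0,\beta)}$ as a quotient of $\cR_{\AA^1_z\times S_2}$, puts the order filtration on it (degree one for $z\partial_{\lambda_i}$, degree zero for functions), and shows that the symbols of the defining operators cut out the zero section $\AA^1_z\times S_2\subset \AA^1_z\times T^*S_2$. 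The key observation is that these symbols coincide with those of the ordinary hypergeometric ideal, so Adolphson's arguments apply, and it is precisely here that the non-degeneracy of $f_{\underline{\lambda}}$ for all $\underline{\lambda}\in S_2$ (Lemma \ref{lem:NonDegen}) is used. From support in the zero section one concludes $\cO$-coherence as in ordinary $\cD$-module theory.

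\medskip

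\textbf{The $z\neq 0$ part and a forward reference.} You invoke Proposition \ref{prop:TwdeRham}, which is proved only later and only for $\beta_0=0$; this is a minor issue since the $\cO$-module structure does not depend on $\beta_0$. More substantively, the paper handles $z\neq 0$ not by computing individual fibres of a de Rham complex but by observing that the localisation $\wh{\cM}_A^{(0,\beta)}\otimes\cO_{\AA^1_z\times S_2}[z^{-1}]$ is an $\cO$-coherent $\cD_{\AA^1_z\times S_2}[z^{-1}]$-module, hence locally free, and its rank equals the holonomic rank of the ordinary GKZ system, known to be $d!\cdot\operatorname{vol}(\Delta)$. Your fibrewise approach would also work once coherence is in hand, but you should be aware that identifying the fibre of the cohomology sheaf with the cohomology of the restricted complex is itself a base-change statement that is immediate only for the top cohomology (by right exactness); the paper sidesteps this by using the module-theoretic description directly.

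\medskip

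In summary: fix the coherence argument---either by reproducing the paper's symbol computation or by giving a genuine reason why the de Rham cohomology is finitely generated over $\cO_{\AA^1_z\times S_2}$---and the rest of your outline goes through.
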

\begin{proof}
There is an isomorphism of $\cO_{\AA^1_z\times S_2}$-modules (or even of $\cR_{\AA^1_z\times S_2}$-modules)
$$
\wh{\cM}_{A}^{(\beta_0,\beta)}\cong\frac{\cR_{\AA^1_z\times S_2}}{
\left(\prod_{j:l_j>0} (z\partial_{\lambda_j})^{l_j}-\prod_{j:l_j<0} (z\partial_{\lambda_j})^{-l_j}\right)_{l\in\LL}
+\left(\sum_{j=1}^n a_{kj}\lambda_j z\partial_{\lambda_j} - z\beta_k\right)_{k=1,\ldots,d}
},
$$
so it suffices to prove the statement for the module on the right hand side of this equation.
We consider the filtration induced on it by the filtration
on $\cR_{\AA^1_z\times S_2}$ for which $z\partial_{\lambda_i}$ has degree $1$ and any element of $\cO_{\AA^1_z\times S_2}$ has degree
zero. The graded module with respect to this filtration is a sheaf on $\AA^1_z\times T^* S_2$, and we first need to show
that its support lies in the zero section, i.e., in the subspace $\AA^1_z\times S_2$. Notice that the symbols
of the operators in the ideal
$\left(\prod_{j:l_j>0} (z\partial_{\lambda_j})^{l_j}-\prod_{j:l_j<0} (z\partial_{\lambda_j})^{-l_j}\right)_{l\in\LL}
+\left(\sum_{j=1}^n a_{kj}\lambda_j z\partial_{\lambda_j} - z\beta_k\right)_{k=1,\ldots,d}$ with respect to the
filtration of $\cR_{\AA^1_z\times S_2}$ defined above are the same as the symbols
of the operators of the usual hypergeometric ideal
$\left(\prod_{j:l_j>0} \partial_{\lambda_j}^{l_j}-\prod_{j:l_j<0} \partial_{\lambda_j}^{-l_j}\right)_{l\in\LL}
+\left(\sum_{j=1}^n a_{kj}\lambda_j \partial_{\lambda_j} - \beta_k\right)_{k=1,\ldots,d}$ with respect to the
(usual) order filtration on $\cD_{S_2}$. Hence by the arguments of \cite[Lem. 3.1 to Lem. 3.3]{Adolphson},
we obtain that the variety cut out by the symbols of the operators in
$\left(\prod_{j:l_j>0} (z\partial_{\lambda_j})^{l_j}-\prod_{j:l_j<0} (z\partial_{\lambda_j})^{-l_j}\right)_{l\in\LL}
+\left(\sum_{j=1}^n a_{kj}\lambda_j z\partial_{\lambda_j} - z\beta_k\right)_{k=1,\ldots,d}$ is $\AA^1_z\times S_2$,
notice that here the fact that $f_{\underline{\lambda}}$ is non-degenerate for all $\underline{\lambda}\in S_2$
(i.e., the statement of the last lemma) plays a crucial role.

From $\textup{supp}(\textup{gr}(\wh{\cM}^{(\beta_0,\beta)}_A))\subseteq \AA^1_z\times S_2$ one deduces as in ordinary $\cD$-module theory that $\wh{\cM}_{A}^{(\beta_0,\beta)}$ is
$\cO_{\AA^1_z\times S_2}$-coherent.
Now the restriction $\wh{\cM}_A^{(0,\beta)}/z\cdot \wh{\cM}_A^{(0,\beta)}$ is isomorphic
to the Jacobian algebra $\textup{Jac}(f)=\cO_{S_1\times S_2}/(\partial_{y_1} f, \ldots, \partial_{y_d} f)$
(see \cite[Lem. 2.12]{ReiSe}), and that the latter has rank equal to $n!\cdot \textup{vol}(\Delta)$ (see \cite[Thm. 1.16]{Kouch}).
Moreover, the localized object $\wh{\cM}_A^{(0,\beta)}\otimes_{\cO_{\AA^1_z\times S_2}} \cO_{\AA^1_z\times S_2}[z^{-1}]$
has a $\cD_{\AA^1_z\times S_2}[z^{-1}]$-module structure, and is $\cO_{\AA^1_z\times S_2}[z^{-1}]$-coherent, hence
$\cO_{\AA^1_z\times S_2}[z^{-1}]$-free. Its rank can also be calculated as the holonomic rank of ordinary GKZ-systems,
see \cite[Prop. 2.7 (3)]{ReiSe}, and equals $n!\cdot\operatorname{vol}(\Delta)$.

This shows that the module $\wh{\cM}^{(\beta_0,\beta)}_A$ itself is $\cO_{\AA^1_z\times S_2}$-free of the same rank
$n!\cdot\operatorname{vol}(\Delta)$.
\end{proof}

For any complex number $\beta$, recall that the Kummer $\cD$-module of parameter $\beta$ is by defintion the quotient $\cK_\beta:=\cD_{\gmt}/(t\dd_t-\beta)$. We will also use in this section $\cR$-modules arising from such $\cD$-modules. Here is the precise definition.

\begin{defi}
For any complex number $\beta$, we define the Kummer $\cR$-module of parameter $\beta$ as the cyclic $\cRint_{\AA^1_z\times \gmt}$-module
$$\wh\cK_{\beta}:=\cRint_{\AA^1\times \gmt}/(z^2\dd_z,zt\dd_t-z\beta).$$
\end{defi}

\begin{rem}\label{rem:KummerHodge}
Although both kinds of Kummer modules can be defined for any complex value of their parameters, in the end we will be interested only in the real case to make use of their Hodge properties. Indeed, since both have no singularities at $\gmt$, if $\cK_\beta$ were a complex Hodge module it would be in fact a complex variation of Hodge structures, and by (the first part of) the proof of \cite[Lem. 4.5]{Sch}, $\beta$ ought to be real.

On the other hand, $\wh\cK_\beta$ is clearly the Rees module of $\cK_\beta$ together with the trivial filtration $F_\bullet$ such that $F_k=0$ for $k<0$ and $F_k=\cK_\beta$ for $k\geq0$. As described in \cite[Prop. 13.5.4]{Mo13} and \cite[Thm. 0.2]{Sa15}, it gives rise to an integrable pure twistor $\cD$-module on $\gmt$, which belongs as well to $\IrrMHM(\gm)$. It is also described as a harmonic bundle at \cite[\S2.1.9]{Mo13}.
\end{rem}

\begin{defi}\label{def:Oalpha}
For any smooth complex algebraic variety $X$, and for any $\beta\in\CC^d$, we will denote by $\cO_{S_1\times X }^\beta$ the $\cD_{S_1\times X }$-module corresponding to the structure sheaf of $S_1 \times X$ twisted by $\uy^{-1-\beta}$, that is,
$$\cO_{S_1\times X }^\beta:=\frac{\cD_{S_1}}{\left(y_k\dd_{y_k}+\beta_k+1\,:\, k=1,\ldots,d\right)}\boxtimes\cO_ X =:\cO_{S_1}^\beta\boxtimes\cO_ X .$$
\end{defi}

Note that for any other $\beta'\in\CC^d$ such that $\beta-\beta'\in\ZZ^d$, $\cO_{S_1\times X }^\beta\cong\cO_{S_1\times X }^{\beta'}$. These modules underly complex Hodge modules if and only if the components of the parameter vector are real numbers, since they are the corresponding exterior products of the Kummer modules $\cK_{-\beta_1},\ldots,\cK_{-\beta_d}$ and $\cO_ X $.

With these notations, the following result is a special case of
\cite[Prop. 3.21]{ReiSe2}, taking into account the twist of $\cO_{S_1\times S_2}$ by $\uy^{-1-\beta}$. However, we prefer
to give a direct proof here, which is a simplified variant of the corresponding statement in \cite[Prop. E.6]{Mo15}.
Notice that the idea of this approach goes back to the so-called ``better behaved GKZ-systems'' of Borisov-Horja (see \cite{BorHor}).
\begin{prop}\label{prop:TwdeRham}
There exists an isomorphism of $\cRint_{\AA^1_z\times S_2}$-modules
$$
\cH^0\left(\pi_{2,*}\Omega_{S_1\times S_2/S_2}^{\bullet+d}[z],z\left(d-\kappa(\beta)\wedge\right)-df\wedge\right) \longrightarrow \wh{\cM}_A^{(0,\beta)}
$$
for any $\beta\in\CC^d$, given by sending $\omega:=\prod_{j=1}^d dy_j/y_j$ to $[1]\in\wh{\cM}_A^{(0,\beta)}$, where
we write $\kappa(\beta)$ for $\sum_{j=1}^d \beta_j dy_j/y_j$ and $\pi_2$ for the the second canonical projection $S_1\times S_2\ra S_2$.
\end{prop}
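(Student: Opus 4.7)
The plan is to construct a surjective $\cRint_{\AA^1_z\times S_2}$-linear map $\Psi:\wh{\cM}_A^{(0,\beta)}\to\cH^0(\pi_{2,*}\Omega^{\bullet+d}_{S_1\times S_2/S_2}[z], d_{tw})$ sending $[1]$ to $[\omega]$, where $d_{tw}:=z(d-\kappa(\beta)\wedge)-df\wedge$, and then to deduce that it is an isomorphism by a rank comparison; the inverse is then the map of the statement. Writing $d_{tw}$ formally as the conjugate of $zd$ by $\uy^{-\beta}e^{-f/z}$, the $\cRint$-action on the right hand side is given by
$$
z\partial_{\lambda_i}[g\omega] = [(z\partial_{\lambda_i}g - \partial_{\lambda_i}f\cdot g)\omega], \qquad z^2\partial_z[g\omega] = [(z^2\partial_z g + f\cdot g)\omega],
$$
and in particular $z\partial_{\lambda_i}[\omega]=[\uy^{\underline{a}_i}\omega]$, since $\partial_{\lambda_i}f=-\uy^{\underline{a}_i}$.

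To see that $\Psi$ is well-defined, I verify that the three families of defining relations of $\wh{\cM}_A^{(0,\beta)}$ (with $\beta_0=0$) annihilate $[\omega]$. Iterating the first formula above gives $\prod_j(z\partial_{\lambda_j})^{n_j}[\omega]=[\uy^{\sum_j n_j\underline{a}_j}\omega]$, so the two sides of any box relation indexed by $\underline{l}\in\LL$ yield the same class because $\sum_j l_j\underline{a}_j=0$. The integrability relation $z^2\partial_z+\sum_j\lambda_j z\partial_{\lambda_j}$ applied to $[\omega]$ gives $[f\omega+\sum_j\lambda_j\uy^{\underline{a}_j}\omega]=[f\omega-f\omega]=0$. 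For the Euler relation $\sum_j a_{kj}\lambda_j z\partial_{\lambda_j}-z\beta_k$, using $\sum_j a_{kj}\lambda_j\uy^{\underline{a}_j}=-y_k\partial_{y_k}f$, I obtain $[-y_k\partial_{y_k}f\cdot\omega-z\beta_k\omega]$, and this is $d_{tw}$-exact: setting $\omega_k:=(-1)^{k-1}\bigwedge_{j\neq k}dy_j/y_j$, one has $d\omega_k=0$, $\kappa(\beta)\wedge\omega_k=\beta_k\omega$ and $df\wedge\omega_k=y_k\partial_{y_k}f\cdot\omega$, whence $d_{tw}(\omega_k)=-z\beta_k\omega-y_k\partial_{y_k}f\cdot\omega$.

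Surjectivity follows from Remark \ref{rem:AssMatrix}(ii), which gives $\NN A=\ZZ^d$: for any $\underline{c}\in\ZZ^d$ pick nonnegative integers $n_i$ with $\underline{c}=\sum_i n_i\underline{a}_i$, so that $\prod_i(z\partial_{\lambda_i})^{n_i}\cdot[1]$ is sent to $[\uy^{\underline{c}}\omega]$; every class on the right is an $\cO_{\AA^1_z\times S_2}$-linear combination of such $[\uy^{\underline{c}}\omega]$.

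For injectivity I argue by rank comparison. By the preceding lemma, the source is locally $\cO_{\AA^1_z\times S_2}$-free of rank $r:=n!\operatorname{vol}(\Delta)$. A filtration argument parallel to the one used there, relying again on the non-degeneracy of $f_{\underline{\lambda}}$ provided by Lemma \ref{lem:NonDegen}, shows the target is $\cO_{\AA^1_z\times S_2}$-coherent. Using the long exact sequence associated to $0\to C^\bullet\xrightarrow{z}C^\bullet\to C^\bullet/z\to 0$ and the vanishing of the non-top cohomologies of the Koszul complex $(\pi_{2,*}\Omega^\bullet_{S_1\times S_2/S_2},-df\wedge)$ (again a consequence of non-degeneracy), the fibre of the target at $z=0$ is identified with the Jacobian algebra $\operatorname{Jac}(f)=\cO_{S_1\times S_2}/(\partial_{y_j}f)_j$, which is $\cO_{S_2}$-locally free of rank $r$ by Kouchnirenko's theorem; and its localisation after inverting $z$ coincides with the ordinary twisted Gauss-Manin system of $f$, also of rank $r$. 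These data force the target to be locally $\cO_{\AA^1_z\times S_2}$-free of rank $r$ as well, and a surjection between locally free sheaves of equal rank must be an isomorphism. The technical heart of the proof is precisely this coherence-and-local-freeness step for the target, which parallels the corresponding part of the preceding lemma.
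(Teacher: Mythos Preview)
Your argument is correct, but the route to injectivity differs from the paper's. You establish well-definedness and surjectivity of $\Psi$ essentially as the paper does (the paper phrases this via an intermediate $\cO_{\AA^1_z\times S_2}$-module $\cQ=\bigoplus_{\underline{c}\in\ZZ^d}\cO\cdot e(\underline{c})$ with $e(\underline{c})\leftrightarrow\uy^{\underline{c}}\omega$, but the content is the same). For injectivity, however, the paper writes down an \emph{explicit} inverse: given $\uy^{\underline{c}}\omega$, choose $\underline{c}=\sum n_i\underline{a}_i$ with $n_i\in\NN$ and send it to $\prod_i(z\partial_{\lambda_i})^{n_i}\in\wh{\cM}_A^{(0,\beta)}$; independence of the choice follows immediately from the box relations. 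This is elementary and self-contained.

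Your rank-comparison approach works but leans on more external input: exactness of the Koszul complex $(\Omega^\bullet,-df\wedge)$ below top degree over $S_2$ (i.e.\ that the partials form a regular sequence in families, not just fibrewise), and the computation of the rank of the twisted Gau\ss--Manin system for general $\beta$. Both are true for non-degenerate $f$, but they are not entirely trivial and deserve a reference. One simplification you can make: the ``filtration argument'' you invoke for coherence of the target is unnecessary and unclear as stated---coherence is immediate once you have surjectivity of $\Psi$, since the target is then a quotient of the locally free source. With that observed, your argument really reduces to: $\Psi$ surjective, target $z$-torsion-free (from $H^{-1}$ of the Koszul complex vanishing), and $\Psi$ an isomorphism modulo $z$ (since both sides reduce to $\operatorname{Jac}(f)$); Nakayama along $z=0$ together with $z$-torsion-freeness of $\ker\Psi\subset\wh{\cM}_A^{(0,\beta)}$ then forces $\ker\Psi=0$. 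This streamlined version avoids the appeal to the rank of $G[z^{-1}]$ altogether.
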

\begin{proof}
We will construct a third module $\cQ/\cK$, and show that both the module $\wh{\cM}_A^{(0,\beta)}$ and the module $\cH^0\left(\pi_{2,*}\Omega_{S_1\times S_2/S_2}^{\bullet+d}[z],z\left(d-\kappa(\beta)\wedge\right)-df\wedge\right)$ are isomorphic to it.
For this purpose, define the free $\cO_{\AA^1_z\times S_2}$-module
$$
\cQ:=\bigoplus_{\underline{c}\in \ZZ^d} \cO_{\AA^1_z\times S_2} \cdot e(\underline{c}),
$$
where $e(\alpha)$ is a symbol representing a generator of $\cQ$. We put a $\cRint_{\AA^1_z\times S_2}$-module structure on $\cQ$ by letting
$$
\begin{array}{rcl}
z \partial_{\lambda_i} e(\underline{c}) & := & e(\underline{c}+\underline{a}_i), \\ \\
z^2\partial_z e(\underline{c}) &:= & \displaystyle  -\sum_{i=1}^{n} \lambda_i \cdot z \partial_{\lambda_i} e(\underline{c}).
\end{array}
$$
Since we have $\NN A = \ZZ^d$ (see Remark \ref{rem:AssMatrix}, (ii)), we conclude from the first of these equations that $\cQ$ is a cyclic $\cRint_{\AA^1_z\times S_2}$-module with generator $e(\underline{0})$.
Moreover, we consider the $\cRint_{\AA^1_z\times S_2}$-submodule $\cK$ of $\cQ$ generated by
$$
\left(\displaystyle \sum_{i=1}^{n} a_{ki} \lambda_i\cdot z\partial_{\lambda_i} + z(c_k-\beta_k)\right)\cdot e(\underline{c})\quad\quad\forall\underline{c}\in\ZZ^d, \;\forall k=1,\ldots,d.
$$
Then we claim that there is an $\cRint_{\AA^1_z\times S_2}$-isomorphism
$$
\phi:\wh{\cM}^{(0,\beta)}_A \stackrel{\cong}{\longrightarrow} \cQ/\cK.
$$
sending $[1]$ to $e(\underline{0})$.
It is clear that $\phi$ is well defined since all operators occurring in the denominator in
the definition of $\wh{\cM}^{(0,\beta)}_A$ act by zero in $\cQ/\cK$: The operators  $\prod_{j:l_j>0} (z\partial_{\lambda_j})^{l_j}-\prod_{j:l_j<0} (z\partial_{\lambda_j})^{-l_j}$ act by zero
already on $\cQ$ (as $\underline{l}\in \LL$);
similarly, $z^2\partial_z+\lambda_1z\partial_{\lambda_1}+\ldots+\lambda_nz\partial_{\lambda_n}$ acts by zero on $\cQ$ because of the
definition of the action of $z^2\partial_z$, and
$\sum_{j=1}^n a_{kj}\lambda_j z\partial_{\lambda_j} - z\beta_k,\; k=1,\ldots,d$ is a generator of $\cK$, so its class is obviously zero
in the quotient $\cQ/\cK$.

An inverse map $\cQ/\cK \rightarrow \wh{\cM}_A^{(0,\beta)}$ is defined as follows: For any $\underline{c} \in \NN^d$, choose a representation
$\underline{c}=\sum_{i=1}^{n} n_i \underline{a}_i$, with $n_i\in\NN$. Then we map $e(\underline{c})$ to the class
of $\prod_{i=1}^{n} (z\partial_{\lambda_i})^{n_i}$ in $\wh{\cM}_A^{(0,\beta)}$. This is well defined: If we take another representation $\underline{c}=\sum_{i=1}^{n} n'_i \underline{a}_i$, then $(n_i-n'_i)_{i=1,\ldots,n}\in\ZZ^n$ lies in $\LL$,
showing that
$$
\displaystyle \prod_{i:n_i>n'_i} (z\partial_{\lambda_i})^{n_i-n'_i}-\prod_{i:n'_i>n_i} (z\partial_{\lambda_i})^{n'_i-n_i} = 0 \in \wh{\cM}^{(0,\beta)}_A,
$$
from what it follows that
$$
\displaystyle \prod_{i=1}^n   (z\partial_{\lambda_i})^{n_i}=\prod_{i=1}^n   (z\partial_{\lambda_i})^{n'_i}\in \wh{\cM}^{(0,\beta)}_A.
$$
Moreover, elements of $\cK$ obviously go to zero in $\wh{\cM}^{(0,\beta)}_A$. It is also clear that this map is inverse to $\phi$.

In order to identify $\cQ/\cK$ with the twisted de Rham cohomology module, notice that the complex
$\left(\pi_{2,*}\Omega_{S_1\times S_2/S_2}^{\bullet+d}[z],z\left(d-\kappa(\beta)\wedge\right)-df\wedge\right)$ is concentrated
in degrees $-d$ to $0$, so we need to compute its top cohomology, i.e., the quotient
\begin{equation}\label{eq:FLGM-system}
\frac{\pi_{2,*}\Omega_{S_1\times S_2/S_2}^{d}[z]}{\left(z\left(d-\kappa(\beta)\wedge\right)-df\wedge\right)\pi_{2,*}\Omega_{S_1\times S_2/S_2}^{d-1}[z]}
\end{equation}
Since we have $\CC[\ZZ^d]\cong H^0(S_1, \cO_{S_1})$, there is an obvious identification of $\cO_{\AA^1_z\times S_2}$-modules,
$$
\cQ \cong \pi_{2,*}\Omega^d_{S_1\times S_2/S_2}[z]
$$
sending $e(\underline{c})$ to $\underline{y}^{\underline{c}}\cdot \omega$.
Now it is easy to see (cf. the proofs of \cite[Prop. E.4 and E.6]{Mo15}) that the image of
$$
\left(z\left(d-\kappa(\beta)\wedge\right)-df\wedge\right)\pi_{2,*}\Omega_{S_1\times S_2/S_2}^{d-1}[z]
$$
can be
identified under this isomorphism with the submodule $\cK$ of $\cQ$. Moreover, one checks that the $\cRint_{\AA^1_z\times S_2}$-module
structures on the quotient \eqref{eq:FLGM-system} and the module $\cQ/\cK$ are compatible, e.g., we have
$z\partial_{\lambda_i} \omega = -(\partial_{\lambda_i} f) \omega=\underline{y}^{\underline{a}_i}\omega$, accordingly with $(z\partial_{\lambda_i})\cdot e(\underline{0})
=e(\underline{a}_i)$.

Combining the two isomorphisms $\cQ/\cK \cong
\cH^0\left(\pi_{2,*}\Omega_{S_1\times S_2/S_2}^{\bullet+d}[z],z\left(d-\kappa(\beta)\wedge\right)-df\wedge\right)$ and
$\wh{\cM}_A^{(0,\beta)} \cong \cQ/\cK$ we obtain the desired one
$$
\cH^0\left(\pi_{2,*}\Omega_{S_1\times S_2/S_2}^{\bullet+d}[z],z\left(d-\kappa(\beta)\wedge\right)-df\wedge\right) \longrightarrow \wh{\cM}_A^{(0,\beta)}
$$
which sends $\omega$ to $[1]$.
\end{proof}

In the sequel, we need to consider an extension of the map $\varphi:S_1\times S_2\rightarrow \AA^1_{\lambda_0}\times S_2$ to a projective morphism. For that purpose,
we will construct a partial compactification of $S_1\times S_2$ by looking at a toric compactification
 of $S_1$. More precisely, consider the embedding
\begin{equation}\label{morphism:g}
\begin{array}{rcl}
g:S_1  & \hookrightarrow & \PP\left(V^\vee\right)=\PP^n \\
\underline{y}  & \longmapsto & (w_0:\ldots:w_n)=\left(1:\underline{y}^{\underline{a}_1}:\ldots: \underline{y}^{\underline{a}_n}\right)
\end{array}
\end{equation}
and put $X:=\overline{\text{Im}(g)}$. Consider the graph $\Gamma_f\subset S_1\times\AA^1_{\lambda_0}\times S_2$, and let $\Gamma X$ be the closure
of $\Gamma_f$ in $X\times\AA^1\times S_2$.
Then we have $\Gamma X\subset Z^*$, where
$$
\begin{tikzcd}
Z^*:= \left\{\sum_{i=0}^n \lambda_i\cdot w_i=0\right\}\ar[hook]{r}{i_P} & \PP^n\times \AA^1_{\lambda_0}\times S_2=:P
\end{tikzcd}
$$
is the universal hyperplane.
We thus have the commutative diagram
\begin{equation}\label{eq:CompactDiagram}
\begin{tikzcd}
S_1\times  S_2 \ar{rd}{\varphi} \ar[hook]{r} \ar[bend left=35]{rr}{k} & \Gamma X \ar{d}{\overline{\varphi}} \ar[hook]{r} & Z^* \ar[swap]{ld}{p_2} \ar[hook]{r}{i_P} & P, \ar{lld}{p_2}\\
& \AA^1\times S_2&
\end{tikzcd}
\end{equation}
where the map $S_1\times S_2 \hookrightarrow \Gamma X$ is the composition of the isomorphism $S_1\times S_2 \stackrel{\cong}{\ra}\Gamma_f$ with $\Gamma_f\hra\Gamma X$ and the map $\Gamma X \hookrightarrow Z^*$ is the closed embedding
mentioned above. Notice that $\overline{\varphi}$ is projective, and restricts to $\varphi$ on $S_1\times S_2$, i.e.,
it is the extension of $\varphi$ we were looking for.

We need also the following important geometric property of the morphisms $\varphi$ and $\overline{\varphi}$.
Recall (see \cite[\S8]{Sa2}) that for a smooth affine variety $U$, a regular function $h:U\rightarrow \AA^1$ with isolated
critical points is called cohomologically tame if there is a partial
compactification $j:U\hookrightarrow\overline{U}$ (i.e. $\overline{U}$ is quasi-projective) and an extension of $h$ to a projective morphism $\overline{h}:\overline{U}\rightarrow \AA^1$ such that the sheaf $\R j_* \QQ_U$ has no vanishing cycles with respect to $\overline{h}$ outside of $U$,
i.e., such that for any $c\in \AA^1$, we have
$$
\textup{supp}(\psi_{\overline{h}-c}(\R j_* \QQ_U)) \subset U.
$$
We call a morphism $H:\cX \rightarrow \mathcal{S}$ on a quasi-projective
variety $\cX$ stratified smooth if there is a locally finite stratification
by locally
closed smooth subvarieties such that the restriction of $H$ to any strata has no critical points.
\begin{lemma}\label{lem:StratSmooth}
We have the following properties of the partial compactifications defined above:
\begin{enumerate}
  \item For any parameter value $\ulambda\in S_2$, the morphism $f_{\ulambda}:S_1\rightarrow \AA^1_{\lambda_0}$ is cohomologically tame.
  \item  The morphism $\overline{\varphi}:\Gamma X \rightarrow \AA^1\times S_2$ is stratified smooth on its boundary $\Gamma X\setminus(S_1\times S_2) $.
  \item  Consider the composition $k':S_1\times S_2 \hookrightarrow P$ of the map $k:S_1\times S_2\hookrightarrow Z^*$ with
  the canonical closed embedding $i_P:Z^*\hookrightarrow P$. Then for any $\beta\in \CC^d$ and for $\star\in\{+,\dag\}$, the module $(k')_\star \cO^\beta_{S_1\times S_2}$ is non-characteristic with respect to $p_2$ along the boundary $\overline{\im(k')}\backslash\im(k')$, i.e., its characteristic variety  satisfies $\textup{char}(k_\star \cO^\beta_{S_1\times S_2}) \cap (\PP^n\times T^*(\AA^1_{\lambda_0}\times S_2))_{|T^*(P\backslash\im(k'))} \subset P\backslash\im(k')$.
\end{enumerate}
\end{lemma}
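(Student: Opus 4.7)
The plan is to reduce all three assertions to the uniform non-degeneracy and convenience of the family $\{f_{\ulambda}\}_{\ulambda\in S_2}$ established in Lemma \ref{lem:NonDegen}, together with the toric geometry of the partial compactification $S_1\hookrightarrow X$ given by \eqref{morphism:g}.

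For part (1), I would take as partial compactification of $f_{\ulambda}$ the fibre over $\ulambda$ of the diagram \eqref{eq:CompactDiagram}, namely the projective morphism $\bar f_{\ulambda}:(\Gamma X)_{\ulambda}\to\AA^1_{\lambda_0}$. The strategy is the standard tameness argument for non-degenerate convenient Laurent polynomials with respect to a toric compactification of the Newton polytope (compare \cite[\S8]{Sa2} and \cite[Lem.~2.9]{ReiSe}): it suffices to show, at every point of $(\Gamma X)_\ulambda\setminus S_1$ lying over a toric orbit $O_\delta$ for a proper face $\delta\prec\Delta$, that the stalk of $\psi_{\bar f_\ulambda-c}(\R j_\ast\QQ_{S_1})$ vanishes for all $c\in\AA^1_{\lambda_0}$. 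This in turn follows from the fact that the face Laurent polynomial $f^\delta_\ulambda$ has no critical points on $O_\delta$, which is precisely Lemma \ref{lem:NonDegen}.

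For part (2), I would stratify $\Gamma X\setminus(S_1\times S_2)$ by the loci $O_\delta\times S_2$ indexed by proper faces $\delta\prec\Delta$, using the toric orbit decomposition of $X\setminus S_1$. In coordinates adapted to $O_\delta$, the restriction of $\bar\varphi$ reads as $(\uy,\ulambda)\mapsto (f^\delta_\ulambda(\uy),\ulambda)$, and Lemma \ref{lem:NonDegen} says exactly that this has no critical points, so $\bar\varphi$ is a submersion on each stratum. For part (3), since $\cO^\beta_{S_1\times S_2}$ is a regular holonomic connection on $S_1\times S_2$, the characteristic variety of $(k')_\star\cO^\beta_{S_1\times S_2}$ along a boundary point lying over $O_\delta$ is controlled by the conormal geometry of $\overline{\im(k')}\subset P$. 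Using the universal hyperplane equation $\sum_{i=0}^n\lambda_iw_i=0$ cutting out $Z^\ast$, a nonzero covector in $T^\ast(\AA^1_{\lambda_0}\times S_2)$ conormal to $\overline{\im(k')}$ at such a boundary point translates into a nontrivial solution of the face critical system associated to $f^\delta_\ulambda$; non-degeneracy of this face polynomial excludes such a solution. Hence no nonzero covectors in the base directions occur, which is the non-characteristicity with respect to $p_2$.

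The main obstacle I expect is part (3): the module $(k')_\star\cO^\beta_{S_1\times S_2}$ is a push-forward along a locally closed, not closed, embedding, and $\overline{\im(k')}$ is singular in general, so one must carefully factor $k'$ through the cleaner geometry $S_1\times S_2\hookrightarrow\Gamma X\hookrightarrow P$ and exploit the stratified smoothness established in part (2) before invoking non-degeneracy. Parts (1) and (2) are essentially direct combinatorial consequences of Lemma \ref{lem:NonDegen}, following familiar patterns from \cite{Sa2} and \cite{ReiSe}.
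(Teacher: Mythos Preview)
Your proposal is correct and follows essentially the same route as the paper. For (1) and (2) the paper argues exactly as you outline: the toric orbit stratification $X=\bigcup_{\delta\subset\Delta}X_\delta$ induces one on $\Gamma X$ (and on each fibre $\Gamma X^{\ulambda}$), and non-degeneracy from Lemma~\ref{lem:NonDegen} gives smoothness of $\overline{\varphi}$ (resp.\ $\overline{f}_{\ulambda}$) on each boundary stratum, after which \cite[Prop.~4.2.8]{Di} yields the vanishing-cycle support condition. For (3) the paper does precisely what you flag as the main obstacle: it bounds $\textup{char}((k')_\star\cO^\beta_{S_1\times S_2})\subset\bigcup_{\delta}T^*_{\Gamma X_\delta}P$ via the stratification of $\overline{\im(k')}=\Gamma X$, and then reads the stratified smoothness of (2) as transversality of the fibres of $p_2$ to each $\Gamma X_\delta$, which is exactly the non-characteristic condition.
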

\begin{proof}
\begin{enumerate}
  \item According to Lemma \ref{lem:NonDegen}, $f_{\ulambda}$ is non-degenerate and convenient for any $\ulambda\in S_2$. It then follows from \cite[Lem. 3.4]{DeLoe} that $f_{\ulambda}$ is cohomologically tame, but since the notations in loc. cit. differ considerably
  from our situation, we recall the proof: We will show that the extension $\overline{f}_{\ulambda}:\Gamma X^{\ulambda}:=(\Gamma X)\cap(\PP^n\times\AA^1_{\lambda_0}\times\{\ulambda\})\rightarrow \AA^1_{\lambda_0}$ which is the restriction of $\overline{\varphi}$ over $\AA^1_{\lambda_0}\times\{\ulambda\}$ has no vanishing cycles at infinity.
  Recall that the projective toric variety $X$ is stratified by $X=\bigcup_{\delta\subset \Delta} X_\delta$, where $\Delta$ is the convex hull in $\RR^d$ of the columns of the matrix $A$, $\delta$ is a face of $\Delta$, and $X_\delta$ is a torus orbit associated with the face $\delta$ (cf., for instance, \cite[Prop. 5.1.9]{GKZbook}). We obtain an induced stratification $(\Gamma X_\delta)_{\delta\in\Delta}$ of $\Gamma X$ and an induced stratification $(\Gamma X^{\ulambda}_\delta)_{\delta\in\Delta}$ of the restriction $\Gamma X^{\ulambda}$.
    Then the smoothness of the restriction $(\overline{f}_{\ulambda})_{|\Gamma X^{\ulambda}_\delta}$  follows from the smoothness of the Laurent polynomial $f^\delta_{\ulambda}=\sum_{i:\underline{a}_i\in\delta} \lambda_i \underline{y}^{\underline{a}_i}$, which has no critical points
  since $f_{\ulambda}$ is non-degenerate (see Lemma \ref{lem:NonDegen}). Hence we see that $\overline{f}_{\ulambda}$ is
  stratified smooth on its boundary, i.e., outside $\Gamma X^{\ulambda}_\Delta = (\Gamma X \backslash \Gamma_f)\cap(\PP^n\times\AA^1_{\lambda_0}\times\{\ulambda\}) \cong S_1$. Then it follows from \cite[Prop. 4.2.8]{Di} that for any constructible complex $\cF^\bullet$ on $\Gamma X^{\ulambda}$ (with respect to the stratification $(\Gamma X^{\ulambda}_\delta)_{\delta\subset \Delta}$) we have $\psi_{\overline{f}_{\ulambda}-c} \cF^\bullet_{|(\Gamma X^{\ulambda} \backslash \Gamma X^{\ulambda}_\Delta)}=0$ for any $c\in \AA^1_{\lambda_0}$. Hence $\textup{supp}(\psi_{\overline{f}_{\ulambda}-c} \cF^\bullet) \subset \Gamma X^{\ulambda}_\Delta \cong S_1$, and so $f_{\ulambda}$ is cohomologically tame since we can apply this to the case where $\cF^\bullet = \R j_* \QQ_{S_1}$, $j:S_1\cong \Gamma X^{\ulambda}_\Delta \hookrightarrow
  \Gamma X^{\ulambda}$ being the canonical open embedding.
  \item The same proof as in (1) applies, i.e., the restriction $\overline{\varphi}_{|\Gamma X_\delta}:\Gamma X_\delta\rightarrow
  \AA^1_{\lambda_0}\times S_2$ is non-singular for all $\delta \subsetneq \Delta$.
  \item We have $\textup{char}((k')_\star\cO_{S_1\times S_2}^\beta) \subset
  \bigcup_{\delta\in\Delta} T^*_{\Gamma X_\delta}P$. The fact that $\overline{\varphi}_{|\Gamma X_\delta}$ is smooth (for $\delta\subsetneq \Delta$)
  means exactly that the fibres of $p_2:P\rightarrow \AA^1_{\lambda_0}\times S_2$ are transversal to $\Gamma X_\delta$. As
  the conormal bundle to these fibres is precisely the space $\PP^n\times T^*(\AA^1_{\lambda_0}\times S_2)$, we obtain
  $(\PP^n\times T^*(\AA^1_{\lambda_0}\times S_2))\cap \Gamma X_\delta \subset \PP^n\times \AA^1_{\lambda_0}\times S_2$, for
  all $\delta\subsetneq \Delta$, and so we have $\textup{char}(k_\star \cO^\beta_{S_1\times S_2}) \cap (\PP^n\times T^*(\AA^1_{\lambda_0}\times S_2))_{|T^*(P\backslash\im(k'))} \subset P\backslash\im(k')$,
  as required.
\end{enumerate}
\end{proof}

In order to achieve the comparison results of this section we need to use several variants of the Fourier-Laplace transformation; let us recall here the definitions.

\begin{defi}\label{def:FL}
Let $Y$ be a smooth algebraic variety, $U$ be a finite-dimensional complex vector space and $U'$ its dual vector space. Denote by $\cE$ the trivial vector bundle $\tau:U\times Y\ra Y$ and by $\cE'$ its dual.
Write $\can:U\times U'\ra\AA^1$ for the canonical morphism defined by $\can(a,\varphi)=\varphi(a)$. This extends to a function $\can:\cE\times_Y\cE'\ra\AA^1$. Define $\cL:=\cO_{\cE\times_{Y}\cE'}\cdot e^{-\can}$, the free rank one module with differential given by the product rule. Consider also the canonical projections $p_1:\cE\times_{Y}\cE'\ra\cE$, $p_2:\cE\times_{Y}\cE'\ra\cE'$. The partial Fourier-Laplace transformation is then defined by
$$\FL_{Y}(\bullet):=p_{2,+}\left(p_1^+(\bullet)\otimes_{\cO_{\cE\times_{Y}\cE'}}^\L\cL\right).$$
\end{defi}

If the base $Y$ is a point we recover the usual Fourier-Laplace transformation and we will simply write $\FL$. Notice that although this functor is defined at the level of derived categories, it is $t$-exact in the derived category of bounded complexes of $\cD$-modules with holonomic cohomologies, i.e., induces a functor $\FL_Y:\Mod_\text{h}(\cD_\cE)\ra \Mod_\text{h}(\cD_{\cE'})$.

We also need the following variant of the Fourier-Laplace transformation.
\begin{defi}\label{def:plocFL}
Keep the notations of the previous definition, and assume moreover that $U$ and $U'$ are one-dimensional, with respective coordinates $t$ and $\tau$. Put $z=\tau^{-1}$, and denote by $j_\tau:\mathds{G}_{m,\tau}\hra\AA^1_\tau$ and $j_z:\mathds{G}_{m,\tau}\hra\AA^1_z=\PP^1_\tau\setminus\{\tau = 0\}$ the canonical embeddings. Then the localized partial Fourier-Laplace transformation with respect to $\tau$ is defined by
$$\FL^{\loc}_{Y}:=(j_z\times\id_Y)_+(j_\tau\times\id_Y)^+\FL_{Y}.$$
\end{defi}

Our next aim is to compare the twisted de Rham cohomology of the family $\varphi:S_1\times S_2\rightarrow \AA^1_{\lambda_0}\times S_2$, i.e., the
left-hand side of the isomorphism of Proposition \ref{prop:TwdeRham} with an object derived from a certain intersection cohomology $\cD$-module on $Z^*$.
Recall that for a smooth algebraic variety $Y$, and an open subvariety $j:U\hra Y$, we call intersection cohomology module with coefficients in some $\cD_U$-module $\cN$ the intermediate extension $j_{\dag+}\cN:=\im(j_\dag \cN\rightarrow j_+\cN)$. Its name comes from the fact that, if $\cN$ is smooth and corresponds to a local system $\cL$ on $U$ under the Rieman-Hilbert correspondence, $j_{\dag+}\cN$ corresponds to the intersection cohomology complex on $Y$ with coefficients in $\cL$, $\IC(Y,\cL)$.

Now is when we can properly state and prove the next comparison result of this section. Recall that we take a matrix $A\in\text{M}(d\times n,\ZZ)$
satisfying the assumptions \ref{assump:AssMatrix} and a parameter vector $\beta\in \CC^d$. Recall that $Z^*=\left\{\sum_{i=0}^n w_i \lambda_i =0\right\}\subset P=\PP^n\times \AA^1_{\lambda_0}\times S_2$ denotes the universal hyperplane, the map $k$, introduced in diagram (\ref{eq:CompactDiagram}), is
$$
\begin{array}{rcl}
  k:S_1\times S_2 & \longrightarrow & Z^* \\
  (\underline{y},\underline{\lambda}) & \longmapsto & ((1:\underline{y}^{\underline{a}_1}:\ldots:\underline{y}^{\underline{a}_n}),
  (-\sum_{i=1}^{n}\lambda_i \underline{y}^{\underline{a}_1},\lambda_1,\ldots,\lambda_n))
\end{array}
$$
and $p_2$ is the restriction of the canonical projection from $P$ to $\AA^1\times S_2$ to the subspace $Z^*$.
\begin{prop}\label{prop:IsoAfterFourier}
In the above situation, we have the following isomorphism of $\cD_{\AA^1_z\times S_2}$-modules:
$$
\FL^{\loc}_{S_2}\cH^0p_{2,+}k_{\dag+}\cO_{S_1\times S_2}^\beta\stackrel{\sim}{\lra}\FL^{\loc}_{S_2}\cH^0\varphi_+\cO_{S_1\times S_2}^\beta.
$$
Moreover, this isomorphism is induced from the canonical morphism $k_{\dag+}\cO_{S_1\times S_2}^\beta\hra k_+\cO_{S_1\times S_2}^\beta$ by applying the functor $\FL^{\loc}_{S_2}\cH^0 p_{2,+}$.
\end{prop}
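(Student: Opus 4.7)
The plan is to reduce the statement to a vanishing property of the boundary contribution after the localized Fourier--Laplace transform. The first step is a bookkeeping reduction: factor $k:S_1\times S_2\to Z^*$ as $k=k''\circ j$, where $j:S_1\times S_2\hookrightarrow \Gamma X$ is the canonical open embedding (whose image is $\Gamma_f$) and $k'':\Gamma X\hookrightarrow Z^*$ is a closed embedding. Since $k''_+$ is exact and $k''_\dag=k''_+$, one obtains
$$
k_{\dag+}\cO^\beta_{S_1\times S_2}=k''_+\, j_{\dag+}\cO^\beta_{S_1\times S_2},\qquad k_+\cO^\beta_{S_1\times S_2}=k''_+\, j_+\cO^\beta_{S_1\times S_2},
$$
and, using the identities $p_2\circ k''=\overline{\varphi}$ and $\overline{\varphi}\circ j=\varphi$ from diagram \eqref{eq:CompactDiagram},
$$
p_{2,+}k_+\cO^\beta_{S_1\times S_2}=\varphi_+\cO^\beta_{S_1\times S_2},\qquad p_{2,+}k_{\dag+}\cO^\beta_{S_1\times S_2}=\overline{\varphi}_+\, j_{\dag+}\cO^\beta_{S_1\times S_2}.
$$
Let $C$ denote the cone of $j_{\dag+}\cO^\beta_{S_1\times S_2}\to j_+\cO^\beta_{S_1\times S_2}$, which is supported on the boundary $\partial:=\Gamma X\setminus(S_1\times S_2)$. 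The long exact cohomology sequence of $\overline{\varphi}_+$ then reduces the proposition to the vanishing
$\FL^{\loc}_{S_2}\cH^i\overline{\varphi}_+ C=0$ for $i=-1,0$.

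The second step is to establish this vanishing using parts (2) and (3) of Lemma~\ref{lem:StratSmooth}. Stratified smoothness of $\overline{\varphi}$ on $\partial$, combined with the vanishing-cycle argument in the proof of Lemma~\ref{lem:StratSmooth}(1) carried out with the twisted local system underlying $\cO^\beta_{S_1}$ in place of the constant sheaf $\QQ_{S_1}$, shows that each restriction $f_{\underline{\lambda}}:S_1\to \AA^1_{\lambda_0}$ is cohomologically tame also with respect to these twisted coefficients. Hence, fiberwise over $S_2$, the Fourier--Laplace transform of $\overline{\varphi}_+ C$ in the $\lambda_0$-direction has support contained in $\{\tau=0\}$. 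The non-characteristicity asserted in Lemma~\ref{lem:StratSmooth}(3) plays the role of proper base change in this setting: it guarantees that the fiberwise description patches globally, so that $\FL_{S_2}\cH^i\overline{\varphi}_+ C$ is supported on $\{\tau=0\}\times S_2$ as a $\cD_{\AA^1_\tau\times S_2}$-module.

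To conclude, recall from Definition~\ref{def:plocFL} that $\FL^{\loc}_{S_2}=(j_z\times\id)_+(j_\tau\times\id)^+\FL_{S_2}$, and that the intermediate restriction $(j_\tau\times\id)^+$ kills any module supported on $\{\tau=0\}\times S_2$. This yields the desired vanishing of $\FL^{\loc}_{S_2}\cH^i\overline{\varphi}_+ C$ for $i=-1,0$, and feeding this into the long exact sequence arising from the distinguished triangle defining $C$ shows that the morphism induced on $\cH^0$ by the canonical arrow $j_{\dag+}\cO^\beta_{S_1\times S_2}\hookrightarrow j_+\cO^\beta_{S_1\times S_2}$ becomes an isomorphism after $\FL^{\loc}_{S_2}$, which is exactly the statement of the proposition.

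The main obstacle is the middle paragraph. Cohomological tameness with twisted coefficients is not a formal consequence of its untwisted counterpart, and upgrading the resulting fiberwise vanishing to a genuine statement over $S_2$ requires a careful application of the non-characteristicity in Lemma~\ref{lem:StratSmooth}(3), which is exactly the reason for including that property there. An alternative route, closer to the strategy of \cite{Reich2,ReiSe2}, would be to realize both sides as explicit twisted relative de Rham models on compactified and uncompactified versions of the family and to derive the isomorphism at the chain level after Fourier--Laplace, thereby sidestepping vanishing-cycle input altogether.
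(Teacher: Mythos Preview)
Your overall strategy---reduce to showing that the boundary contribution $C=\mathrm{cone}(k_{\dag+}\cO^\beta\to k_+\cO^\beta)$ is killed by $\FL^{\loc}_{S_2}\cH^i p_{2,+}$---is a reasonable and genuinely different route from the paper's. The paper does not argue via the cone at all: it rewrites both sides as restrictions of Radon transforms $\cR g_{\dag+}\cO_{S_1}^\beta$ and $\cR g_+\cO_{S_1}^\beta$ (via base change along $p_1$), compares these to $\cR^\circ$ and $\cR^\circ_c$ using excision triangles whose third terms are constant Radon transforms (hence $\cO$-free, hence killed by $\FL^{\loc}$), and finally identifies $\cR^\circ_c g_+$ with $\cR^\circ g_\dag$ through the d'Agnolo--Eastwood comparison $\cR^\circ_{(c)}\cong\FL\circ(\text{affine direct image})$. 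This is a functorial argument that never touches the geometry of the boundary $\partial$.

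Your middle paragraph, however, is a genuine gap, and the mechanism you invoke to fill it is not the right one. Cohomological tameness with twisted coefficients is a fibrewise statement about vanishing cycles, and it does not by itself yield that $\FL_{S_2}\cH^i\overline{\varphi}_+C$ is supported on $\{\tau=0\}\times S_2$; nor does non-characteristicity play the role of ``proper base change'' here. What actually closes the gap is this: the cohomology sheaves of $C$ are subquotients of $(k')_+\cO^\beta$ and $(k')_\dag\cO^\beta$, so Lemma~\ref{lem:StratSmooth}(3) gives $\mathrm{char}(\cH^jC)\cap(\PP^n\times T^*(\AA^1_{\lambda_0}\times S_2))\subset$ zero section over the boundary (and $\cH^jC$ vanishes elsewhere). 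Kashiwara's estimate for the characteristic variety of a proper direct image then forces $\mathrm{char}(\cH^ip_{2,+}C)$ into the zero section of $T^*(\AA^1_{\lambda_0}\times S_2)$, i.e.\ $\cH^ip_{2,+}C$ is $\cO$-coherent. An $\cO$-coherent $\cD_{\AA^1_{\lambda_0}\times S_2}$-module has partial Fourier--Laplace transform supported on $\{\tau=0\}\times S_2$, which is annihilated by $(j_\tau\times\id)^+$. This argument is direct and avoids both the fibrewise reasoning and the tameness detour. Your suggested ``alternative route'' via de Rham models is also not what the paper does; the Radon-transform machinery is the actual content of the paper's proof.
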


Before entering into the proof of this Proposition, we will need to state some facts about Radon transformations
for $\cD$-modules. We follow \cite[\S~2]{Reich2}. Recall that $V$ and $V^\vee$ are dual affine spaces of dimension $n+1$ with coordinates $w_0,\ldots,w_n$ and $\lambda_0,\ldots,\lambda_n$ respectively.
\begin{defi}
Denote by $Z\subset\PP(V^\vee)\times V$ the universal hyperplane with equation $\sum_{i=0}^nw_i\lambda_i=0$ and by $U:=(\PP(V^\vee)\times V)\setminus Z$ its complement. Consider the following commutative diagram:
$$\xymatrix{ && U \ar[drr]^{\pi_2^U} \ar[dll]_{\pi_1^U} \ar@{^(->}[d]^{j_U}&& \\ \PP(V^\vee) && \PP(V^\vee) \times V \ar[ll]_{\pi_1} \ar[rr]^{\pi_2} && V. \\ && Z \ar[ull]^{\pi_1^Z} \ar@{^(->}[u]_{i_Z} \ar[rru]_{\pi_2^Z} &&}$$
The Radon transformations are functors from $\Db(\cD_{\PP(V^\vee)})$ to $\Db(\cD_V)$ given by
\begin{align}
\cR & := \pi^Z_{2,+}\pi_1^{Z,+}\cong\pi_{2,+}i_{Z,+}i_{Z}^+\pi_1^+, \notag \\
\cR^\circ & := \pi^U_{2,+}\pi_1^{U,+}\cong\pi_{2,+}j_{U,+}j^+_U\pi_1^+, \notag \\
\cR^\circ_c & :=\pi^U_{2,\dag}\pi_1^{U,+}\cong\pi_{2,+}j_{U,\dag}j^+_U\pi_1^+, \notag \\
\cR_{cst} & :=\pi_{2,+}\pi_1^+. \notag
\end{align}
\end{defi}

\begin{prop}\label{prop:triangleRadon}
Let $g$ be as in (\ref{morphism:g}). Then, for every $\beta\in\CC^d$, we have the following two exact sequences of regular holonomic $\cD_V$-modules, which are dual to each other:
$$0\lra\cH^{-1}\cR_{cst}g_+ \cO_{S_1}^\beta\lra\cH^0\cR g_+\cO_{S_1}^\beta\lra\cH^0\cR^\circ_cg_+\cO_{S_1}^\beta\lra\cH^0\cR_{cst}g_+\cO_{S_1}^\beta\lra0,$$
$$0\lra\cH^0\cR_{cst}g_\dag\cO_{S_1}^{-\beta}\lra\cH^0\cR^\circ g_\dag\cO_{S_1}^{-\beta}\lra\cH^0\cR g_\dag\cO_{S_1}^{-\beta}\lra\cH^1\cR_{cst}g_\dag\cO_{S_1}^{-\beta}\lra0.$$
Moreover, the $\cD_V$-modules $\cH^i\cR_{cst}g_\star \cO_{S_1}^\beta$, for $i\in \{-1,0,1\}$ and $\star\in\{+,\dag\}$, that appear in the above sequences are $\cO_V$-free. Consequently, for any $\beta\in\CC^d$, calling $j_{V^*}$ the canonical inclusion $V^*:=\AA^1_{\lambda_0}\times S_2\hra V$, we have isomorphisms of $\cD_{\AA^1_z\times S_2}$-modules
$$\begin{array}{ccc}
\FL_{S_2}^{\loc}j_{V^*}^+\cR^\circ_c g_+\cO_{S_1}^\beta & \cong & \FL_{S_2}^{\loc}j_{V^*}^+\cR g_+\cO_{S_1}^\beta,\\[3pt]
\FL_{S_2}^{\loc}j_{V^*}^+\cR^\circ g_\dag\cO_{S_1}^\beta & \cong & \FL_{S_2}^{\loc}j_{V^*}^+\cR g_\dag\cO_{S_1}^\beta
\end{array}.$$
\end{prop}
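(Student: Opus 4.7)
The plan is to read both 4-term sequences off the two standard distinguished triangles relating the Radon transforms, and then pass to the localized partial Fourier-Laplace transform using that $\cR_{cst}$ produces a trivial $\cD_V$-module.

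Starting from the open/closed triangles attached to the stratification $\PP(V^\vee)\times V = U \sqcup Z$, namely
\[
j_{U,\dag}j_U^+ \lra \id \lra i_{Z,+}i_Z^+ \stackrel{+1}{\lra} \qquad \text{and} \qquad i_{Z,+}i_Z^! \lra \id \lra j_{U,+}j_U^+ \stackrel{+1}{\lra},
\]
I would apply the first to $\pi_1^+ g_+\cO_{S_1}^\beta$ and the second to $\pi_1^+ g_\dag\cO_{S_1}^{-\beta}$, and then push forward along $\pi_2$. Using $i_Z^!\cong i_Z^+[-2]$ for the smooth codimension-one embedding $i_Z$, this produces two distinguished triangles in $\Db(\cD_V)$ linking $\cR^\circ_c g_+\cO^\beta$, $\cR_{cst}g_+\cO^\beta$, $\cR g_+\cO^\beta$ in the first case, and $\cR g_\dag\cO^{-\beta}[-2]$, $\cR_{cst}g_\dag\cO^{-\beta}$, $\cR^\circ g_\dag\cO^{-\beta}$ in the second. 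Taking the long exact cohomology sequence of each and controlling the concentration degrees of the entries---using that $g$ is an affine embedding and that $\pi_2^Z:Z\to V$ is proper with fibres $\PP^{n-1}$, so that $\cR g_\star\cO^\beta$, $\cR^\circ g_\star\cO^\beta$ and $\cR^\circ_c g_\star\cO^\beta$ all live in a single degree while $\cR_{cst}g_\star\cO^\beta$ lives in two adjacent degrees---should let one truncate to exactly the 4-term form displayed in the proposition, with the indicated connecting homomorphisms arising as the natural coboundary operators.

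The $\cO_V$-freeness of $\cH^i\cR_{cst}g_\star\cO_{S_1}^\beta$ is almost by inspection: since $\cR_{cst} = \pi_{2,+}\pi_1^+$ and $\pi_2:\PP(V^\vee)\times V\to V$ is a trivial projection, base change yields the Künneth-type identification
\[
\cR_{cst} g_\star \cO_{S_1}^\beta \;\cong\; \cO_V \otimes_\CC H_{\textup{dR}}^\bullet\bigl(\PP(V^\vee),\, g_\star \cO_{S_1}^\beta\bigr),
\]
so each cohomology sheaf is a constant $\cD_V$-module, hence $\cO_V$-free. The final isomorphisms then follow from the general principle that the localized partial Fourier-Laplace transform annihilates trivial $\cD$-modules on $V^* = \AA^1_{\lambda_0}\times S_2$: if $\cN\cong\cO_{V^*}^{\oplus r}$, then $\FL_{S_2}\cN$ is a $\delta$-type $\cD$-module supported along $\{\tau=0\}\times S_2$, which is killed by the open restriction $(j_\tau\times\id_{S_2})^+$ appearing in the definition of $\FL^{\loc}_{S_2}$. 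Consequently $\FL^{\loc}_{S_2}j_{V^*}^+\cH^i\cR_{cst}g_\star\cO^\beta = 0$ for all $i$, and applying the $t$-exact functor $\FL^{\loc}_{S_2}j_{V^*}^+$ to each 4-term exact sequence the outer two terms vanish, leaving the two claimed isomorphisms, which are moreover visibly induced by the canonical map $j_{U,\dag}j_U^+ \to j_{U,+}j_U^+$, i.e.\ by $k_{\dag+}\cO^\beta \to k_+\cO^\beta$ in the language of the previous proposition. The main technical hurdle will be the concentration/degree bookkeeping outlined above, needed to turn the natural long exact cohomology sequences into precisely the 4-term form stated, rather than a longer exact sequence with additional non-vanishing terms.
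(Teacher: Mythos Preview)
Your approach is essentially the same as the paper's: both derive the exact sequences from the excision triangles relating the Radon transforms, establish $\cO_V$-freeness of the $\cR_{cst}$ terms via a K\"unneth/base-change argument, and conclude the final isomorphisms by noting that $\FL^{\loc}_{S_2}$ annihilates constant $\cD$-modules. The paper defers the degree bookkeeping to \cite{Reich2} (Prop.~2.5--2.8 and Lem.~2.6--2.7) and uses the duality $\DD\cO_{S_1}^\beta\cong\cO_{S_1}^{-\beta}$ to obtain the second sequence from the first rather than deriving both independently, but this is a cosmetic difference.
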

\begin{proof}
Following the notation of the previous definition, since $Z$ is smooth, the excision triangle (\hspace{-.5pt}\cite[Prop. 1.7.1]{Hotta}) corresponding to the diagram $U\hra\PP(V^\vee)\times V\leftarrow Z$ gives rise to the following triangles of Radon transformations for any $\cM\in\Dbc(\cD_{\PP(V^\vee)})$:
\begin{align}
\cR_{cst}\cM\lra\cR^\circ\cM\lra\cR\cM\lra, \label{eq:Radontri1}\\
\cR\cM\lra\cR^\circ_{c}\cM\lra\cR_{cst}\cM\lra \label{eq:Radontri2},
\end{align}
where the second triangle is dual to the first.

Note that $\DD\cO_{S_1}^\beta\cong\cO_{S_1}^{-\uuno-\beta}\cong\cO_{S_1}^{-\beta}$. It suffices then to show the existence of the first exact sequence of the statement of the proposition and the fact that the $\cH^i\cR_{cst}g_\star \cO_{S_1}^\beta$ are constant. The existence of the exact sequences here follows by just a variation of \cite[Prop. 2.8]{Reich2}, but considering the twist by $\uy^{-1-\beta}$ of $\cO_{S_1}$. That needs in turn [ibid., Prop. 2.5, 2.7, Lem. 2.6], but almost every argument is functorial and anyway they can be easily adapted to our context. The constancy of $\cH^i\cR_{cst}g_\star\cO_{S_1}^{\beta}$ can be proved as the second point of [ibid., Lem. 2.7].
\end{proof}

\begin{proof}[Proof of Proposition \ref{prop:IsoAfterFourier}]
Call $p_1$ the restriction to $Z^*$ of the projection $\PP^n\times\AA_{\lambda_0}^1\times S_2\ra\PP^n$, abusing a bit of the notation.
Consider the cartesian diagram
$$
\xymatrix{\ar @{} [dr] |{\Box} S_1\times S_2 \ar[d]_{k} \ar[r]^{\pi_1} & S_1 \ar[d]^{g}\\
Z^* \ar[r]_{p_1} & \PP^n}.
$$
By base change, we have that
\begin{equation}\label{eq:BaseChange1}
k_+\cO_{S_1\times S_2}^\beta\cong p_1^+g_+\cO_{S_1}^\beta
\end{equation}
for any $\beta\in\CC^d$. Now, since $p_1$ is smooth, we have the analogous isomorphism $k_\dag\cO_{S_1\times S_2}^\beta\cong p_1^+g_\dag\cO_{S_1}^\beta$ for every $\beta$ as well. Now note that because of $p_1$ being non-characteristic, it is easy to show (cf. the proof of the third point of \cite[Prop. 2.22]{ReiSe2}) that for every $\beta$
\begin{equation}\label{eq:BaseChange2}
k_{\dag+}\cO_{S_1\times S_2}^\beta\cong p_1^+g_{\dag+}\cO_{S_1}^\beta.
\end{equation}

Consider now the commutative diagram
$$\xymatrix{ & Z^* \ar[dl]_{p_1} \ar[r]^{p_2} \ar[d]^{j_{Z^*}} & V^* \ar[d]^{j_{V^*}}\\
\PP^n & Z \ar[l]^{\pi_1^Z} \ar[r]_{\pi_2^Z} & V},$$
where the square is also cartesian. By applying base change again, we obtain the natural transformations of functors from $\Dbc(\cD_{\PP^n})$ to $\Dbc(\cD_{V^*})$
\begin{equation}\label{eq:NatTr}
p_{2,+}p_1^+\cong p_{2,+}j_{Z^*}^+\pi_1^{Z,+}\cong j_{V^*}^+\pi_{2,+}^Z\pi_1^{Z,+}=j_{V^*}^+\cR.
\end{equation}
Therefore, applying $p_{2,+}$ to isomorphisms \eqref{eq:BaseChange1} and \eqref{eq:BaseChange2} (and taking into account
that $\varphi=p_2\circ k$, see diagram \eqref{eq:CompactDiagram}) we obtain
\begin{equation}\label{eq:MIC_Project}
\varphi_+\cO^\beta_{S_1 \times S_2} \cong p_{2,+}k_{+}\cO_{S_1\times S_2}^\beta\cong j_{V^*}^+\cR g_{+}\cO_{S_1}^\beta,
\quad\textup{and}\quad
p_{2,+}k_{\dag+}\cO_{S_1\times S_2}^\beta\cong j_{V^*}^+\cR g_{\dag+}\cO_{S_1}^\beta,
\end{equation}
for any $\beta\in\CC^d$.

We need now to relate the various Radon transformations with the Fourier-Laplace transformation $\FL: \Mod_{\text{h}}(\cD_{V^\vee})\ra\Mod_{\text{h}}(\cD_V)$. This is possible due to the fundamental result of d'Agnolo and Eastwood \cite[Prop. 1]{AE}. We quote the formulation from the proof of \cite[Lem. 2.12]{ReiSe2}. Let $Bl_0\left(V^\vee\right)\subset\PP^n\times V^\vee$ be the blow-up of $V^\vee$ at the origin and consider the commutative diagram
$$\xymatrix{ T:=\GG_m\times S_1 \ar [r]^{h} \ar [dr]^{\tilde{h}} \ar[dd]_{\pi_T} & V^\vee & Bl_0(V^\vee) \ar[l]_p \ar[ddl]^{q}\\
& V^\vee\setminus\{0\} \ar[u]_{j_0} \ar[d]_{\pi} & \\
S_1 \ar[r]^{g} & \PP^n &},$$
where $\pi$ is the canonical morphism $V^\vee\setminus\{0\}\ra\PP(V^\vee)$, $\pi_T$ is the second projection, and $h$ and $\tilde{h}$ are given by $(y_0,\uy)\mapsto (y_0,y_0\uy^{\underline{a}_1},\ldots,y_0\uy^{\underline{a}_d})$. Then we have the natural transformations
\begin{equation}\label{eq:NatTr2}
\cR^\circ_cg_+\cong\FL h_+\pi_T^+\,\text{ and }\,\cR^\circ g_\dag\cong\FL h_\dag\pi_T^+,
\end{equation}
of functors from $\Mod_{\text{h}}(\cD_{S_1})$ to $\Mod_{\text{h}}(\cD_V)$. Notice that although
in general $\cR^\circ_c$ and $\cR^\circ$ are not exact, so the compositions $\cR^\circ_c g_+$ and $\cR^\circ g_\dag$ are precisely due to the above isomorphisms.

Applying $\FL_{S_2}^{\loc}j_{V^*}^+$ to the natural transformations in (\ref{eq:NatTr2}) we obtain the following ones of functors from $\Mod_{\text{h}}(\cD_{S_1})$ to $\Mod_{\text{h}}(\cD_{\AA^1_z\times S_2})$:
\begin{equation}\label{eq:RadonFourier}
\FL_{S_2}^{\loc}j_{V^*}^+\cR^\circ_c g_+\cong\FL_{S_2}^{\loc}j_{V^*}^+\FL h_+\pi^+\,\text{ and }\,\FL_{S_2}^{\loc}j_{V^*}^+\cR^\circ g_\dag\cong\FL_{S_2}^{\loc}j_{V^*}^+\FL h_\dag\pi^+
\end{equation}
We are now closer to the isomorphism of the statement; let us rewrite in a different way the functors above to obtain it. Namely,
$$\FL^{\loc}_{S_2}j_{V^*}^+\FL\cong j_{V^*}^+\FL^{\loc}_W\FL\cong j_{V^*}^+(j_z\times\id_W)_+(j_\tau\times\id_W)^+\FL_W\FL\cong
j_{V^*}^+(j_z\times\id_W)_+(j_\tau\times\id_W)^+\FL_{\AA^1_{w_0}}.$$
From the third of the assumptions \ref{assump:AssMatrix} we know that we can decompose the morphism $h$ as
$$\begin{tikzcd}
h:T \ar[hook]{r}{h_1} & \mathds{G}_{m,w_0}\times W^\vee \ar[hook]{rr}{j_\tau\times\id_{W^\vee}} &&
\AA^1_{w_0}\times W^\vee=V^\vee
\end{tikzcd},$$
where $h_1$ is a closed embedding (cf. \cite[Prop. 2.1 (1), proof of Thm. 2.4, p. 213]{ReiSe}). (Recall that $w_0=-\tau$.) It follows that
$$(j_\tau\times\id_W)^+\FL_{\AA^1_{w_0}}h_+\cong\FL_{\AA^1_{w_0}}(j_\tau\times\id_{W^\vee})^+h_+\cong
\FL_{\AA^1_{w_0}}(j_\tau\times\id_{W^\vee})^+(j_\tau\times\id_{W^\vee})_+h_{1,+}\cong\FL_{\AA^1_{w_0}}h_{1,+}.$$
Summarizing, we obtain that
$$\FL^{\loc}_{S_2}j_{V^*}^+\FL h_\star\cong j_{V^*}^+(j_z\times\id_W)_+\FL_{\AA^1_{w_0}}h_{1,+},$$
where $\star\in\{+,\dag\}$, because $h_{1,+}=h_{1,\dag}$ for $h_1$ is proper. In particular,
we have $\FL^{\loc}_{S_2}\circ j_{V^*}^+ \circ \FL \circ h_+ \cong \FL^{\loc}_{S_2}\circ j_{V^*}^+ \circ \FL \circ h_\dag$. As a consequence, we can claim using the isomorphisms in (\ref{eq:RadonFourier}) that
$$\FL_{S_2}^{\loc}j_{V^*}^+\cR^\circ_cg_+\cong\FL_{S_2}^{\loc}j_{V^*}^+\cR^\circ g_\dag,$$
and so, from Proposition \ref{prop:triangleRadon} and applying this last natural transformation to $\cO_{S_1}^\beta$,
$$\FL_{S_2}^{\loc}j_{V^*}^+\cR g_{\dag+}\cO_{S_1}^\beta\cong\FL_{S_2}^{\loc}j_{V^*}^+\cR g_+\cO_{S_1}^\beta.$$

In conclusion, we finally obtain, by using the isomorphisms from (\ref{eq:MIC_Project}) as well as the exactness
of the functor $\FL^{\loc}_{S_2}$, that
$$
\FL_{S_2}^{\loc}\cH^0p_{2,+}k_{\dag+}\cO_{S_1\times S_2}^\beta\cong \FL_{S_2}^{\loc}\cH^0\varphi_+\cO_{S_1\times S_2}^\beta.
$$

The last statement is an easy consequence of isomorphisms in (\ref{eq:NatTr}) and (\ref{eq:MIC_Project}), noting that
$$\FL^{\loc}_{S_2}\cH^0p_{2,+}k_{\dag+}\cO_{S_1\times S_2}^\beta\cong\FL^{\loc}_{S_2}\cH^0j_{V^*}^+\cR g_{\dag+}\cO_{S_1}^\beta\cong\FL_{S_2}^{\loc}j_{V^*}^+\cR g_+\cO_{S_1}^\beta\cong\FL_{S_2}^{\loc}\cH^0\varphi_+\cO_{S_1\times S_2}^\beta$$
is induced from the canonical morphism $g_{\dag+}\cO_{S_1}^\beta\hra g_+\cO_{S_1}^\beta$ via $\FL_{S_2}^{\loc}j_{V^*}^+\cR$.
\end{proof}

Consider again the space $P=\PP^n\times(\AA^1_{\lambda_0}\times S_2)$, together with the canonical closed embedding
$i_P:Z^* \hookrightarrow P$. We have the diagram
$$
\begin{tikzcd}
Z^* \ar{d}{p_2} \ar[hook]{r}{i_P} & P \ar{ld}{p_2} \\
\AA^1_{\lambda_0}\times S_2
\end{tikzcd},
$$
where we denote the restriction of the projection $p_2:P\rightarrow \AA^1_{\lambda_0}\times S_2$ to $Z^*$ by
the same letter (as was done before).
Since $i_P$ is proper, we have
$$
\begin{array}{c}
\cH^0p_{2,+}k_{\dag+}\cO_{S_1\times S_2}^\beta \cong \cH^0p_{2,+}(i_P\circ k)_{\dag+}\cO_{S_1\times S_2}^\beta
\quad
\text{ and }
\\\\
\cH^0\varphi_+\cO_{S_1\times S_2}^\beta
\cong \cH^0p_{2,+}k_+\cO_{S_1\times S_2}^\beta \cong
\cH^0p_{2,+}(i_P\circ k)_+\cO_{S_1\times S_2}^\beta.
\end{array}
$$
so that we can also write the morphism
$\cH^0p_{2,+}k_{\dag+}\cO_{S_1\times S_2}^\beta\lra\cH^0\varphi_+\cO_{S_1\times S_2}^\beta$
from the proof of Proposition \ref{prop:IsoAfterFourier} as
\begin{equation}\label{eq:MorphismPhiBeforeReduction}
\cH^0p_{2,+}(i_P\circ k)_{\dag+}\cO_{S_1\times S_2}^\beta\lra
\cH^0p_{2,+}(i_P\circ k)_+\cO_{S_1\times S_2}^\beta.
\end{equation}

In order to proceed, we will have to take into account certain group actions on the spaces $S_1\times S_2$,
$P$ and $\AA^1_{\lambda_0}\times S_2$ as well as some equivariance properties
of the various sheaves of modules on these spaces. For the reader's convenience, we recall some facts from
\cite[\S~2.4]{ReiSe2}.

We consider the action of $S_1$ on $S_2$ given by
$$
\begin{array}{rcl}
\mu:S_1\times S_2 & \longrightarrow & S_2 \\ \\
(y_1,\ldots,y_d,\lambda_1,\ldots,\lambda_n) & \longmapsto & (\underline{t}^{-\underline{a}_1}\lambda_1,\ldots,\underline{t}^{-\underline{a}_n}\lambda_n),
\end{array}
$$
inducing the following three:
$$
\begin{array}{rcl}
S_1\times \left(S_1\times S_2\right) & \longrightarrow & S_1 \times S_2 \\ \\
(\underline{t},(y_1,\ldots,y_d,\lambda_1,\ldots,\lambda_n)) & \longmapsto & (t_1 y_1,\ldots,t_d y_d,\underline{t}^{-\underline{a}_1}\lambda_1,\ldots,\underline{t}^{-\underline{a}_n}\lambda_n),
\end{array}
$$
$$
\begin{array}{rcl}
S_1\times P & \longrightarrow & P= \PP^n\times(\AA^1_{\lambda_0}\times S_2) \\ \\
(\underline{t},((w_0:\ldots:w_n),\lambda_0,\lambda_1,\ldots,\lambda_n)) & \longmapsto & ((w_0:\underline{t}^{\underline{a}_1} w_1:
\ldots:\underline{t}^{\underline{a}_n} w_n),\lambda_0, \underline{t}^{-\underline{a}_1}\lambda_1,\ldots,\underline{t}^{-\underline{a}_n}\lambda_n),
\end{array}
$$
$$
\begin{array}{rcl}
S_1\times \left(\AA^1_{\lambda_0} \times S_2\right) & \longrightarrow & \AA^1_{\lambda_0} \times S_2 \\ \\
(\underline{t},(\lambda_0,\lambda_1,\ldots,\lambda_n)) & \longmapsto & (\lambda_0, \underline{t}^{-\underline{a}_1}\lambda_1,\ldots,\underline{t}^{-\underline{a}_n}\lambda_n).
\end{array}
$$

It is easy to see that all these four actions are free, basically because $\mu$ is free and that they all have smooth geometric
quotients.  These are described
by the following result, which we cite from \cite[§2.4]{ReiSe2}.  For a free action of an algebraic group $G$ on a smooth variety $X$ admitting a geometric quotient, we write $X/G$ for such quotient.
\begin{prop}
In the above situation, put $\Sred:=(\gmt)^{n-d}$. Then the geometric quotients
$(S_1\times S_2)/S_1$, $P/S_1$
and $(\AA^1_{\lambda_0}\times S_2)/S_1$ are given, respectively, by the spaces
$$
S_1\times \Sred, \; \Pred:=\PP^n\times \AA^1_{\lambda_0} \times \Sred, \;\text{ and }
\AA^1_{\lambda_0}\times \Sred.
$$
There is a canonical embedding $\Sred \hookrightarrow S_2$ inducing embeddings (all denoted by $\iota$)
$$
\begin{array}{rcl}
S_1\times \Sred & \hookrightarrow  &S_1\times S_2,\\
\Pred & \hookrightarrow  &P=\PP^n\times \AA^1_{\lambda_0} \times S_2,\\
S_1\times \AA^1_{\lambda_0} \times \Sred & \hookrightarrow  &S_1\times \AA^1_{\lambda_0} \times S_2.
\end{array}
$$
\end{prop}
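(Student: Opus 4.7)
My plan is to derive everything from the freeness of the fundamental action $\mu: S_1 \times S_2 \to S_2$; the three induced actions then inherit freeness by inspection, since each contains either $\mu$ or left translation on $S_1$ as a free diagonal component. For freeness of $\mu$, I would first observe that it is given by the group homomorphism $\rho: S_1 \to S_2$, $\underline{t} \mapsto (\underline{t}^{-\underline{a}_1}, \ldots, \underline{t}^{-\underline{a}_n})$, whose kernel consists of $\underline{t} \in S_1$ satisfying $\underline{t}^{\underline{a}_i} = 1$ for all $i$. By Assumption~\ref{assump:AssMatrix}(i), this forces $\underline{t}^{\underline{a}} = 1$ for every $\underline{a} \in \ZZ^d$, hence $\underline{t} = 1$. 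Consequently $\rho$ is a closed immersion of tori, and $\mu$ is free.

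To construct the canonical embedding $\iota$, I would dualize $\rho$ to get a short exact sequence of character lattices
\begin{equation*}
0 \longrightarrow L \longrightarrow \ZZ^n \stackrel{-A^t}{\longrightarrow} \ZZ^d \longrightarrow 0,
\end{equation*}
where $L$ is free of rank $n-d$. Since $\ZZ^d$ is free, this sequence splits; identifying $\Sred = (\gmt)^{n-d}$ with $\Hom(L, \gm)$, a splitting yields an isomorphism $S_2 \cong S_1 \times \Sred$ under which $\mu$ becomes translation on the $S_1$-factor. The projection $S_2 \twoheadrightarrow \Sred$ is then a geometric quotient map, and $\iota: \Sred \hookrightarrow S_2$ arises as the corresponding section. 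The three embeddings in the statement are obtained as products of $\iota$ with identities on the remaining factors.

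Computing the three quotients is then routine using the splitting. Writing $(s, \sigma) \in S_1 \times \Sred$ for elements of $S_2$: for $(S_1 \times S_2)/S_1$, an $S_1$-invariant map of the form $(y, (s, \sigma)) \mapsto (y \cdot s, \sigma)$ (up to the splitting isomorphism) identifies the quotient with $S_1 \times \Sred$; for $(\AA^1_{\lambda_0} \times S_2)/S_1$, the $\AA^1_{\lambda_0}$-factor is fixed by the action, giving $\AA^1_{\lambda_0} \times \Sred$; and for $P/S_1$ the map
\begin{equation*}
p: ((w_0:\ldots:w_n), \lambda_0, (s, \sigma)) \longmapsto ((w_0: s^{\underline{a}_1} w_1: \ldots : s^{\underline{a}_n} w_n), \lambda_0, \sigma)
\end{equation*}
is $S_1$-invariant (via the cancellation $(\underline{t}^{-1} s)^{\underline{a}_i} \cdot \underline{t}^{\underline{a}_i} = s^{\underline{a}_i}$) with single-orbit fibres, yielding $P/S_1 \cong \Pred$. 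The delicate case is the last: the $S_1$-action on $\PP^n$ alone is \emph{not} free (coordinate strata where some $w_i$ vanish are fixed), but freeness of the diagonal action on $\PP^n \times S_2$ is inherited from $\mu$ on the $S_2$-factor, which suffices to invoke standard GIT for free torus actions and identify $p$ as the geometric quotient.
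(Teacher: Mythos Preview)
Your approach is essentially the same as the paper's: the proposition is cited from \cite[\S2.4]{ReiSe2} without a self-contained proof, and the paragraph following it sketches exactly your argument via the split exact sequence $0\to\ZZ^{n-d}\stackrel{B\cdot}{\to}\ZZ^n\stackrel{-A\cdot}{\to}\ZZ^d\to 0$ and its $\Hom_\ZZ(\bullet,\gm)$-dual. One small correction: in your displayed sequence the surjection $\ZZ^n\to\ZZ^d$ should be labeled $-A$ (or $-A\cdot$), not $-A^t$, since $A\in\mathrm{M}(d\times n,\ZZ)$ already acts by left multiplication on column vectors; $A^t$ would go the wrong way.
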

In the sequel, we will always consider $\Sred$ as a subvariety of $S_1$ (as well as
$\Pred$ as a subvariety of $P$ etc.).

There is a more direct description of the construction of $\Sred$ resp. of the embedding $\iota$ which we recall for the reader's convenience. Namely, let $B$ a Gale dual of $A$ (that is, an integer matrix $B\in\text{M}(n\times (n-d),\ZZ)$ whose columns generate $\ker_\QQ(A)$; see Proposition \ref{prop:hypGKZ}). Then we have the split exact sequence of abelian groups
$$
0\lra\ZZ^{n-d}\stackrel{B\cdot}{\lra}\ZZ^n\stackrel{-A\cdot}{\lra}\ZZ^d\lra0.
$$
By applying the functor $\text{Hom}_{\ZZ}(\bullet,\gm)$ to that sequence, we get the exact sequence of algebraic groups
$$
(1)\lra S_1=\gm^d \lra S_2=\gm^n \lra\gm^{n-d}\lra(1),
$$
where the first nontrivial morphism is just $\mu(\bullet,\uuno)$. From this it follows that the torus $\gm^{n-d}$ can be canonically
identified with the geometric quotient $S_2/S_1$. Since this exact sequence of algebraic tori also splits, we can chose a section $\iota:\Sred = S_2/S_1\hra S_2$, and this choice corresponds exactly to the one in \cite[§2.4]{ReiSe2}.

We define
reduced versions of the maps $\varphi$ and $i_P\circ k$ by the cartesian diagrams
$$
\begin{tikzcd}
S_1 \times S_2 \ar{rr}{\varphi} & & \AA^1_{\lambda_0}\times S_2 \\ \\
S_1 \times \Sred \ar[hook]{uu}{\iota} \ar{rr}{{\,'\!}\varphi} && \AA^1_{\lambda_0}\times \Sred \ar[hook]{uu}{\iota}
\end{tikzcd}
\quad\quad
\text{and}
\quad\quad
\begin{tikzcd}
S_1 \times S_2 \ar{rr}{i_P\circ k} & & P \\ \\
S_1 \times \Sred \ar[hook]{uu}{\iota} \ar{rr}{l} && \Pred\ar[hook]{uu}{\iota}
\end{tikzcd}.
$$

The mapping ${\,'\!}\varphi: S_1\times \Sred\rightarrow \AA^1_{\lambda_0}\times \Sred$ is a family of Laurent polynomials
parametrized by $\Sred$, and we denote by ${\,'\!}f:S_1\times \Sred\rightarrow \AA^1_{\lambda_0}$ its first component.
In order to illustrate these statements, consider the main case of interest, where we have
$$
A=
\begin{pmatrix}
1 & -1 & 0 & 0 &\ldots  & 0  \\
1 &  0 & -1 & 0 & \ldots & 0 \\
\vdots &\ldots \\
1 &  0 & 0 & 0 & \ldots & -1
\end{pmatrix}
\in \text{M}\left((n-1) \times n, \ZZ\right).
$$
Then $d=n-1$, $\Sred=\GG_{m,t}$ and we can choose
$$
\begin{array}{rcl}
\iota: \Sred=\gmt & \longrightarrow &  S_2 = \gm^n\\ \\
t & \longmapsto & (t,1,\ldots,1).
\end{array}
$$
Then we have (see also the explanation on page \pageref{page:Roadmap})
\begin{equation}\label{eq:LaurPolPn}
\begin{array}{rcl}
{\,'\!}\varphi:S_1\times \GG_{m,t} & \longrightarrow & \AA^1_{\lambda_0}\times \GG_{m,t} \\ \\
(y_1,\ldots,y_{n-1},t) & \longmapsto & \left(-\frac{1}{y_1}-\ldots-\frac{1}{y_{n-1}}-t\cdot y_1\cdot\ldots\cdot y_{n-1}, t \right)
\end{array}
\end{equation}

Going back to the situation of a general matrix $A\in \text{M}(d\times n,\ZZ)$ satisfying the assumptions
\ref{assump:AssMatrix}, we state the following result, inspired from \cite{ReiSe2}, showing that $\iota$ behaves well with respect
to all modules in question.
To simplify our notation we will write
$$
M_{\dag+,S_2}:=\cH^0 p_{2,+} (i_P\circ k)_{\dag\,+}\cO^\beta_{S_1\times S_2}
\quad\quad
\text{and}
\quad\quad
M_{S_2}:=\cH^0\varphi_+\cO^\beta_{S_1\times S_2} =
\cH^0 p_{2,+} (i_P\circ k)_+\cO^\beta_{S_1\times S_2}
,$$
and analogously,
$$
M_{\dag+}:=\cH^0 {'\!}p_{2,+} l_{\dag\,+}\cO^\beta_{S_1\times \Sred}
\quad\quad
\text{and}
\quad\quad
M:=\cH^0{\,'\!}\varphi_+\cO^\beta_{S_1\times \Sred} =
\cH^0 p_{2,+} l_+\cO^\beta_{S_1\times \Sred},
$$
where we write ${'\!}p_2:\Pred=\PP^n\times\AA^1_{\lambda_0}\times \Sred\rightarrow \AA^1_{\lambda_0}\times \Sred$
(to distinguish this projection from the one to $\AA^1_{\lambda_0}\times S_2$ considered above).
\begin{prop}\label{prop:IsoAfterFourierRed}
We have a morphism of $\cD_{\AA^1_{\lambda_0}\times \Sred}$-modules
$$
\phi:M_{\dag +} \longrightarrow M
$$
inducing an isomorphism of $\cD_{\AA^1_z\times \Sred}$-modules
$$
\FL^{\loc}_{\Sred} M_{\dag +}\cong
\FL^{\loc}_{\Sred} M.
$$
Moreover, both modules $M$ and $M_{\dag+}$ are non-characteristic with respect to the projection ${'\!}p:{'\!}P\rightarrow \AA^1_{\lambda_0}\times {'\!}S$ along the subspace ${'\!}P\, \backslash\, \Pred^*$, where $\Pred^*=\AA^n\times(\AA_{\lambda_0}^1\times \Sred)$.
\end{prop}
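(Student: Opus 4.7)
The plan is to descend the relevant objects and morphism from $\AA^1_{\lambda_0}\times S_2$ down to $\AA^1_{\lambda_0}\times\Sred$ via the closed embedding $\iota$, exploiting the natural $S_1$-equivariant structures and then invoking Proposition \ref{prop:IsoAfterFourier}. First, I would observe that $\cO^\beta_{S_1\times S_2}$ is $S_1$-equivariant for the action recalled above, and hence so are its direct images $(i_P\circ k)_\star\cO^\beta_{S_1\times S_2}$ for $\star\in\{+,\dag+\}$ and the modules $M_{\dag+,S_2}$ and $M_{S_2}$. Since $\iota:\Sred\hookrightarrow S_2$ is a section of the quotient map $S_2\to S_2/S_1=\Sred$ and the $S_1$-action on the $\AA^1_{\lambda_0}$-factor is trivial, base change along the cartesian squares defining $l$ and ${\,'\!}\varphi$ will yield canonical isomorphisms $\iota^+ M_{\dag+,S_2}\cong M_{\dag+}$ and $\iota^+ M_{S_2}\cong M$. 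The morphism $\phi$ is then defined as $\iota^+$ applied to \eqref{eq:MorphismPhiBeforeReduction}.

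Second, I would establish the compatibility
\[
\FL^{\loc}_{\Sred}\circ\iota^+\cong\iota^+\circ\FL^{\loc}_{S_2},
\]
where on the right hand side $\iota$ denotes the induced inclusion $\AA^1_z\times\Sred\hookrightarrow\AA^1_z\times S_2$. This is immediate from the fact that the localized partial Fourier-Laplace transformation only affects the $\AA^1_{\lambda_0}$-direction, which is untouched by $\iota$. Combining this with Proposition \ref{prop:IsoAfterFourier}, which says that $\FL^{\loc}_{S_2}$ applied to \eqref{eq:MorphismPhiBeforeReduction} is an isomorphism, yields the desired isomorphism $\FL^{\loc}_{\Sred}M_{\dag+}\xrightarrow{\cong}\FL^{\loc}_{\Sred}M$.

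For the non-characteristic claim, I would transport the analogous property from Lemma \ref{lem:StratSmooth}(3), which handles $(k')_\star\cO^\beta_{S_1\times S_2}$ on $P$, down to the reduced setting via the equivariant restriction. Since $\Pred$ is a section of the quotient $P\to P/S_1$ and the strata of $P\setminus\im(k')$ are $S_1$-invariant, their intersection with $\Pred$ yields strata of $\Pred\setminus\im(l)$; the transversality with respect to the fibres of $p_2$ is then inherited as transversality with respect to the fibres of ${\,'\!}p$, which gives the non-characteristic property for $l_\star\cO^\beta_{S_1\times\Sred}$.

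The main obstacle will be the careful verification of the base change isomorphisms $\iota^+ M_{\star,S_2}\cong M_\star$, since this is where the $S_1$-equivariant descent plays a nontrivial role: one must ensure that $\iota^+$ commutes both with $\cH^0{\,'\!}p_{2,+}$ and with the intermediate extension $\dag+$ along $l$. Once these compatibilities are in place, the Fourier-Laplace compatibility and the preservation of the non-characteristic property follow routinely, along the lines of the analogous arguments in \cite{ReiSe2}.
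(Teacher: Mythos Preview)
Your approach is essentially the same as the paper's: both define $\phi$ by applying $\iota^+$ to the morphism \eqref{eq:MorphismPhiBeforeReduction}, both use the commutation of $\FL^{\loc}$ with $\iota^+$, and both derive the non-characteristic claim from Lemma \ref{lem:StratSmooth}. The one point worth noting is how the paper dispatches the obstacle you correctly identify. Rather than arguing via $S_1$-equivariance and base change for the intermediate extension, the paper simply invokes \cite[Prop.~2.22, Lem.~6.4]{ReiSe2}, which already establishes that the closed embedding $\iota:\AA^1_{\lambda_0}\times\Sred\hookrightarrow\AA^1_{\lambda_0}\times S_2$ is non-characteristic for both $M_{\dag+,S_2}$ and $M_{S_2}$. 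This is exactly the technical statement that guarantees $\iota^+$ is exact on these modules and that $\iota^+ M_{\star,S_2}\cong M_\star$; the $S_1$-equivariance you emphasize is the geometric reason behind this non-characteristic property, but the paper treats it as a black box from the cited reference. So your plan is correct, and your ``main obstacle'' is precisely what the citation to \cite{ReiSe2} resolves.
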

\begin{proof}
It has been shown in {\cite[Prop. 2.22., Lem. 6.4.]{ReiSe2}} that the morphism $\iota:\AA^1_{\lambda_0}\times \Sred\hookrightarrow \AA^1_{\lambda_0}\times S_2$ is non-characteristic for
the modules $M_{\dag +,S_2}$ and $M_{S_2}$. Consequently,
we obtain the morphism of $\cD_{\AA^1_{\lambda_0}\times \Sred}$-modules
$$
\phi:M_{\dag +} \longrightarrow M
$$
by applying $\iota^+$ to the morphism
$$
M_{\dag +,S_2} = \cH^0 p_{2,+} (i_P\circ k)_{\dag\,+}\cO^\beta_{S_1\times S_2}
\longrightarrow \cH^0\varphi_+\cO^\beta_{S_1\times S_2} =
\cH^0 p_{2,+} (i_P\circ k)_+\cO^\beta_{S_1\times S_2} = M_{S_2}
$$
(see \eqref{eq:MorphismPhiBeforeReduction}).
Since both the functor $\FL^{\loc}_{S_2}$ and the functor
$\FL^{\loc}_{\Sred}$ commute with $\iota$, we obtain the isomorphism
$\FL^{\loc}_{\Sred} M_{\dag +}\cong \FL^{\loc}_{\Sred} M$.

Finally, the fact that $\iota$ is non-characteristic for $M_{\dag +,S_2}$ and $M_{S_2}$
yields the last statement using Lemma
\ref{lem:StratSmooth}.
\end{proof}

\begin{rem}\label{rem:realExponents}
From now on we will assume that the $\beta_i$ are real numbers, since we will use some Hodge theoretic constructions,
which are not valid for arbitrary complex $\beta_i$, as commented in Remark \ref{rem:KummerHodge}.
\end{rem}
We start by introducing certain auxiliary filtrations on the $\cD$-modules considered above which are not a priori
Hodge filtrations but have an explicit description. We will see later that it is sufficient to consider these
filtrations to extract the Hodge theoretic information we need.

Recall that an easy calculation decomposing ${\,'\!}\varphi$ as a graph embedding followed by a projection shows that the direct image complex ${\,'\!}\varphi_+\cO_{S_1\times \Sred}^\beta$ can be represented by
$$
\Big({\,'\!}\varphi_*\Omega^{\bullet+d}_{S_1\times \Sred/\Sred}[\dd_{\lambda_0}], d-\kappa(\beta)\wedge-(d{\,'\!}f\wedge)\otimes \dd_{\lambda_0}\Big),
$$
where ${\,'\!}f$ is still the first component of ${\,'\!}\varphi$.

We consider the filtration on each ${\,'\!}\varphi_*\Omega^{l+d}_{S_1\times \Sred/\Sred}[\dd_{\lambda_0}]$ given by
\begin{equation}\label{eq:FiltDirectImageComp}
F_{k+l}{\,'\!}\varphi_*\Omega^{l+d}_{S_1\times \Sred/\Sred}[\dd_{\lambda_0}]=\sum_{i=0}^{k+l}{\,'\!}\varphi_*\Omega^{l+d}_{S_1\times \Sred/\Sred}\partial_{\lambda_0}^i.
\end{equation}
One easily checks that this filtration is compatible with the differential, so that we obtain the filtered complex
$$
\begin{array}{c}
F_k\Big({\,'\!}\varphi_*\Omega^{\bullet+d}_{S_1\times \Sred/\Sred}[\dd_{\lambda_0}], d-(\kappa(\beta)\wedge)-(d{\,'\!}f\wedge)\otimes \dd_{\lambda_0}\Big):=\\
\Big(F_{k+\bullet}{\,'\!}\varphi_*\Omega^{\bullet+d}_{S_1\times \Sred/\Sred}[\dd_{\lambda_0}], d-(\kappa(\beta)\wedge)-(d{\,'\!}f\wedge)\otimes \dd_{\lambda_0}\Big).
\end{array}
$$
\begin{defi}
We call the induced filtration $F_\bullet \cH^i{\,'\!}\varphi_+\cO_{S_1\times \Sred}^\beta$ on the cohomology sheaves $\cH^i{\,'\!}\varphi_+\cO_{S_1\times \Sred}^\beta$ the \emph{Brieskorn filtration}.
\end{defi}
Notice that for $i=0$, $k=0$ and $\beta=0$ the filtration step $F_k \cH^0{\,'\!}\varphi_+\cO_{S_1\times \Sred}^\beta =F_0 M$
is exactly the module $M_0$ considered in \cite{Sa2} and was called Brieskorn lattice there
because it was defined similarly to the case of isolated hypersurface singularities (see \cite{Brie}).

On the other hand, it is well-known that $l_{\dag+}\cO_{S_1\times \Sred}^\beta$ carries a filtration $F^Hl_{\dag+}\cO_{S_1\times \Sred}^\beta$ such that $\left(l_{\dag+}\cO_{S_1\times \Sred}^\beta, F^H\right)$ underlies a pure polarizable complex Hodge module of weight $n=\dim(S_1)+\dim(\Sred)$, in particular, an element of $\MHM(\Pred,\CC)$.
Since ${'\!}p_2$ is projective, the direct image $M_{\dag +}=\cH^0 {'\!}p_{2,+}l_{\dag+}\cO_{S_1\times \Sred}^\beta$ also carries a Hodge filtration $F^H\cH^0 {'\!}p_{2,+}l_{\dag+}\cO_{S_1\times \Sred}^\beta$ such that $\left(M_{\dag +},F^H\right)$ underlies a pure polarisable complex Hodge module (see the first point of \cite[Thm. 1]{Saito1}).
We consider the shifted filtration $F^{H_{sh}}_\bullet M_{\dag +}:=F^H_{\bullet-d} M_{\dag +}$. The next step is to compare this filtration to the Brieskorn filtration on $M$ via the morphism $\phi$. We will show in the next lemma that $\phi$ is filtered.
Notice that since the Brieskorn filtration has a priori no Hodge properties, we cannot simply deduce this result
from the functorial properties of mixed Hodge modules.

\begin{lemma}\label{lem:InclusionHodgeFilt}
Consider again the morphism
$\phi:M_{\dag +} \longrightarrow M$ from Proposition \ref{prop:IsoAfterFourierRed}
which yields
an isomorphism of $\cD_{\AA^1_z\times \Sred}$-modules after applying the functor $\FL^{\loc}_{\Sred}$.
Then $\phi$ is filtered with respect to the shifted Hodge filtration on $M_{\dag+}$ and with respect
to the Brieskorn filtration on $M$, i.e., for every $k\in\ZZ$, we have the inclusion
$$
\phi\left(F^{H_{sh}}_k M_{\dag +} \right)\subset F_k M.
$$
\end{lemma}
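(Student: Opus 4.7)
The plan is to express both filtrations explicitly via Saito's direct image formula and compare them through the canonical morphism relating $l_{\dag+}\cO^\beta_{S_1\times\Sred}$ to $l_+\cO^\beta_{S_1\times\Sred}$ on $\Pred$.

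First, I would observe that $\phi$ is induced by applying $\cH^0{\,'\!}p_{2,+}$ to the canonical morphism $l_{\dag+}\cO^\beta_{S_1\times\Sred} \to l_+\cO^\beta_{S_1\times\Sred}$. Since $l_{\dag+}\cO^\beta$ is a pure Hodge module on $\Pred$ and ${\,'\!}p_2$ is smooth projective, Saito's direct image formula gives an explicit description of $F^H_\bullet M_{\dag+}$ as the image in $M_{\dag+}$ of sections of the filtered relative de Rham complex of $l_{\dag+}\cO^\beta$, with the filtration determined by the Hodge filtration on $l_{\dag+}\cO^\beta$ together with the usual Koszul-type convention for the differential forms.

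Second, I would rewrite the Brieskorn filtration on $M$ via the graph factorization ${\,'\!}\varphi = \pi \circ \Gamma_{{\,'\!}f}$, where $\Gamma_{{\,'\!}f}:S_1\times\Sred\hookrightarrow S_1\times\AA^1_{\lambda_0}\times\Sred$ is the graph embedding and $\pi$ is the projection to the last two factors. The filtration defined by (\ref{eq:FiltDirectImageComp}) is precisely the image under $\cH^0\pi_+$ of the natural Hodge-type filtration on $\Gamma_{{\,'\!}f,+}\cO^\beta \cong \cO^\beta[\partial_{\lambda_0}]$ given by degree in $\partial_{\lambda_0}$. The shift by $d=\dim S_1$ in $F^{H_{sh}}_\bullet = F^H_{\bullet-d}$ matches the relative dimension of $\pi$, so that $F^{H_{sh}}_k M$ as computed this way coincides with $F_k M$.

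Third, I would connect the two descriptions via the open immersion $S_1\times\Sred\subset \Pred$. Both $l_{\dag+}\cO^\beta$ and $l_+\cO^\beta$ restrict to $\cO^\beta_{S_1\times\Sred}$ on this open locus, and the Hodge filtration on the minimal extension $l_{\dag+}\cO^\beta$ is a filtered subsheaf of any reasonable meromorphic-type filtration on $l_+\cO^\beta$. By the non-characteristic property established in Proposition \ref{prop:IsoAfterFourierRed} (using Lemma \ref{lem:StratSmooth}), the direct image along ${\,'\!}p_2$ of this containment is preserved, and the target is identified, on the open part, with the image of the graph-factorization Hodge-type filtration, namely the Brieskorn filtration $F_\bullet M$. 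The desired inclusion $\phi(F^{H_{sh}}_k M_{\dag+})\subset F_k M$ then follows.

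The main obstacle is the careful bookkeeping of shifts between the two Saito direct image formulas (from $l_{\dag+}$ via ${\,'\!}p_2$ of relative dimension $n$ versus from $\Gamma_{{\,'\!}f,+}$ via $\pi$ of relative dimension $d$), together with verifying that the Hodge filtration on $l_{\dag+}\cO^\beta$ imposes no additional pole-order restriction along the boundary $\Pred\setminus(S_1\times\Sred)$ that would fail to be captured by the Brieskorn construction on the open part.
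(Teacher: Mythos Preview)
Your overall strategy matches the paper's: both filtrations are expressed via Saito's filtered relative de~Rham complex, and the comparison is induced from the canonical map $l_{\dag+}\cO^\beta \to l_+\cO^\beta$. The graph factorization you describe, and the shift bookkeeping (relative dimension $n$ for ${\,'\!}p_2$ on $\Pred$ versus $d$ on the fibres of $\pi$), are exactly what the paper does.

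However, your third step misidentifies the mechanism. The non-characteristic property from Proposition~\ref{prop:IsoAfterFourierRed} is \emph{not} what is needed here; the paper does not invoke it in this lemma at all (it enters only later, in Theorem~\ref{thm:EqualFourierFilt}, to obtain the reverse inclusion at the formal level). The actual argument is more elementary: factor $l = l_2 \circ l_1$ with $l_1 : S_1 \times \Sred \hookrightarrow \Pred^*$ closed and $l_2 : \Pred^* \hookrightarrow \Pred$ open, and observe that $\Pred^*$ is \emph{affine}. On global sections, $\phi$ is then induced by the restriction map
\[
\R\Gamma\bigl(\Pred, \mathrm{DR}^{\bullet+n}_{\Pred/\AA^1\times\Sred}(l_{\dag+}\cO^\beta)\bigr)
\longrightarrow
\Gamma\bigl(\Pred^*, \mathrm{DR}^{\bullet+n}_{\Pred^*/\AA^1\times\Sred}(l_{1,+}\cO^\beta)\bigr),
\]
using that on $\Pred^*$ one has $l_{\dag+}\cO^\beta|_{\Pred^*} = l_{1,\dag+}\cO^\beta = l_{1,+}\cO^\beta$ (since $l_1$ is proper). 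Restriction of sections trivially respects the filtered de~Rham complexes, and the right-hand filtration \emph{is} the shifted Brieskorn filtration because $l_1$ is a closed embedding with an explicit Hodge filtration. No transversality or non-characteristic condition is required for this direction of inclusion.

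Your ``main obstacle'' is therefore a non-issue: you are worried that the Hodge filtration on $l_{\dag+}\cO^\beta$ might fail to be captured by the open restriction, but for the inclusion $\phi(F^{H_{sh}}_k) \subset F_k$ the concern goes the other way---the minimal extension can only make the filtration \emph{smaller}, which is exactly what you want.
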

\begin{proof}
Recall that $\Pred^*=\AA^n\times(\AA_{\lambda_0}^1\times \Sred)$ is the complement in $\Pred$ of the divisor
$w_0=0$. Then
the map $l$ can be decomposed into a closed embedding
$l_1:S_1\times \Sred \hookrightarrow \Pred^*$ and an open embedding $l_2: \Pred^*\hookrightarrow \Pred$.
Here again the fact that $l_1$ is closed follows from the third assumption in \ref{assump:AssMatrix}.
We can further decompose $l_1$ as $l_1=l_0\circ i_{{\,'\!}\varphi}$, where $i_{{\,'\!}\varphi}$ is the graph embedding
of ${\,'\!}\varphi$.
More precisely,
we have the following diagram:
\begin{equation}\label{eq:CompactDiagram-2}
\begin{tikzcd}
S_1\times  \Sred \ar[swap]{rrdd}{{\,'\!}\varphi} \ar[hook]{r}{i_{{\,'\!}\varphi}} \ar[swap,bend left=35]{rr}{l_1}
\ar[bend left=35]{rrr}{l}& S_1\times(\AA^1_{\lambda_0}\times \Sred) \ar[hook]{r}{l_0} \ar{rdd}{{'\!}p_2}&
\Pred^*=\AA^n\times(\AA^1_{\lambda_0}\times \Sred) \ar{dd}{{'\!}p_2} \ar[hook]{r}{l_2}& \Pred=\PP^n\times(\AA^1_{\lambda_0}\times \Sred) \ar{ldd }{{'\!}p_2} \\ \\
& & \AA^1_{\lambda_0}\times \Sred&
\end{tikzcd},
\end{equation}
where we denote by slight abuse of notation all projections to the last coordinates by ${'\!}p_2$.
Recall that $M_{\dag+}=\cH^0 {'\!}p_{2,+} l_{\dag\,+}\cO^\beta_{S_1\times \Sred}$ and $M=\cH^0{\,'\!}\varphi_+\cO^\beta_{S_1\times \Sred}
=\cH^0{'\!}p_{2,+} l_{1,+}\cO^\beta_{S_1\times \Sred}$.

Notice that the Hodge filtration $F^H_\bullet M_{\dag +}$
is induced from the filtration $F^H_\bullet \text{DR}^{\bullet+n}_{\Pred/\AA_{\lambda_0}^1\times \Sred}(l_{\dag +}\cO_{S_1\times \Sred}^\beta)$
on the relative de Rham complex, where
\begin{equation}\label{eq:HodgeFiltdeRham1}
F^H_k \text{DR}^{\bullet+n}_{\Pred/\AA_{\lambda_0}^1\times \Sred}(l_{\dag +}\cO_{S_1\times \Sred}^\beta)
:=\left(\ldots\lra \Omega^{n+l}_{\Pred/\AA_{\lambda_0}^1\times \Sred}\otimes F^H_{k+n+l} l_{\dag +}\cO_{S_1\times \Sred}^\beta \lra \ldots \right)
\end{equation}
by the first point of \cite[Thm. 1]{Saito1}.

On the other hand, we have the isomorphism of $\cD_{\AA^1_{\lambda_0}\times \Sred}$-modules
$$
\cH^0{'\!}p_{2,*} \text{DR}^{n+\bullet}_{\Pred^*/\AA_{\lambda_0}^1\times \Sred}\left(l_{1, +}\cO_{S_1\times \Sred}^\beta\right) \cong M
$$
as the above diagram shows, just by the definition of $\cH^0{'\!}p_{2,+}$.
The Hodge filtration $F_\bullet^H$ on the module $l_{1, +}\cO_{S_1\times \Sred}^\beta$ can be explicitly written down, due to the fact that $l_1$ is a closed embedding (in particular, the filtered module
$(l_{1, +}\cO_{S_1\times \Sred}^\beta, F_\bullet^H)$ underlies a pure Hodge module), and then one checks that the shifted Brieskorn filtration $F_\bullet[-d]:=F_{\bullet+d}$ on the module $M$ (see formula \eqref{eq:FiltDirectImageComp} for the definition of the Brieskorn filtration) is induced
from the filtration
\begin{equation}\label{eq:HodgeFiltdeRham2}
F_k^H \text{DR}^{\bullet+n}_{\Pred^*/\AA_{\lambda_0}^1\times \Sred}(l_{1, +}\cO_{S_1\times \Sred}^\beta)
:=\left(\ldots\lra \Omega^{n+l}_{\Pred^*/\AA_{\lambda_0}^1\times \Sred}\otimes F^H_{k+n+l} l_{1, +}\cO_{S_1\times \Sred}^\beta \lra \ldots \right).
\end{equation}
Notice once again that we cannot deduce from this description that the Brieskorn filtration $F_\bullet M$ (or its shifted version $F_\bullet[-d] M$)
has any Hodge properties since the map ${'\!}p_2:\Pred^* \rightarrow  \AA^1_{\lambda_0}\times \Sred$  is not projective
(and in particular the filtration from formula \eqref{eq:FiltDirectImageComp} on the direct image complex ${\,'\!}\varphi_+\cO_{S_1\times \Sred}^\beta$ is not necessarily strict).

We will now show that the induced morphism on global sections
$$
\phi:\Gamma(\AA^1_z\times \Sred, M_{\dag +})\longrightarrow \Gamma(\AA^1_z\times \Sred, M)
$$
is filtered, i.e. sends the $\Gamma(\AA^1_z\times \Sred, \cO_{\AA^1_z\times \Sred,})$-submodule
$\Gamma(\AA^1_z\times \Sred,F_k^{H_{sh}} M_{\dag +})$ to $\Gamma(\AA^1_z\times \Sred,F_k M)$. This is obviously equivalent
to the statement of the proposition since $\AA^1_z\times \Sred$ is affine.

We have (denoting $a:\AA^1_z\times \Sred \rightarrow \{pt\}$)
$$
\begin{array}{rcl}
\Gamma(\AA^1_z\times \Sred, M_{\dag +}) &=& a_* M_{\dag +} =
a_* \cH^0 \R ('p_2)_* \text{DR}^{n+\bullet}_{\Pred/\AA_{\lambda_0}^1\times \Sred}(l_{\dag +}\cO_{S_1\times \Sred}^\beta)
\\ \\ &=&
H^0 \R (a\circ {'p}_2)_* \text{DR}^{n+\bullet}_{\Pred/\AA_{\lambda_0}^1\times \Sred}(l_{\dag +}\cO_{S_1\times \Sred}^\beta)
=H^0
\R\Gamma \left(\Pred,\text{DR}^{n+\bullet}_{\Pred/\AA_{\lambda_0}^1\times \Sred}\left(l_{\dag +}\cO_{S_1\times \Sred}^\beta\right)\right)
\end{array}
$$
and similarly
$$
\begin{array}{rcl}
\Gamma(\AA^1_z\times \Sred, M_+) &=& a_* M_+ =
a_* \cH^0 \R ('p_2)_* \text{DR}^{n+\bullet}_{\Pred^*/\AA_{\lambda_0}^1\times \Sred}(l_{1,+}\cO_{S_1\times \Sred}^\beta)
\\ \\ &=&
H^0 \R (a\circ {'p}_2)_* \text{DR}^{n+\bullet}_{\Pred^*/\AA_{\lambda_0}^1\times \Sred}(l_{1,+}\cO_{S_1\times \Sred}^\beta)
=H^0
\R\Gamma \left(\Pred^*,\text{DR}^{n+\bullet}_{\Pred^*/\AA_{\lambda_0}^1\times \Sred}\left(l_{1,+}\cO_{S_1\times \Sred}^\beta\right)\right) \\ \\
&=&H^0\Gamma \left(\Pred^*,\text{DR}^{n+\bullet}_{\Pred^*/\AA_{\lambda_0}^1\times \Sred}\left(l_{1,+}\cO_{S_1\times \Sred}^\beta\right)\right)
\end{array}
$$
(for the last equality, we use that $\Pred^*$ is affine, in contrast to $\Pred$).
Now notice that the morphism $\phi:\Gamma(\AA^1_z\times \Sred, M_{\dag +})\longrightarrow \Gamma(\AA^1_z\times \Sred, M)$
is induced from
\begin{align*}
\R\Gamma \left(\Pred,\text{DR}^{n+\bullet}_{\Pred/\AA_{\lambda_0}^1\times \Sred}\left(l_{\dag +}\cO_{S_1\times \Sred}^\beta\right)\right)
& \ra
\R\Gamma\left(\Pred^*,\text{DR}^{n+\bullet}_{\Pred/\AA_{\lambda_0}^1\times \Sred}\left(l_{\dag +}\cO_{S_1\times \Sred}^\beta\right) \right)\\
& \cong
\R\Gamma\left(\Pred^*,\text{DR}^{n+\bullet}_{\Pred/\AA_{\lambda_0}^1\times \Sred}\left(l_{1,\dag+}\cO_{S_1\times \Sred}^\beta\right) \right)\\
& \cong
\R\Gamma\left(\Pred^*,\text{DR}^{n+\bullet}_{\Pred^*/\AA_{\lambda_0}^1\times \Sred}\left(l_{1,+}\cO_{S_1\times \Sred}^\beta\right) \right)\\
& \cong
\Gamma\left(\Pred^*,\text{DR}^{n+\bullet}_{\Pred^*/\AA_{\lambda_0}^1\times \Sred}\left(l_{1,+}\cO_{S_1\times \Sred}^\beta\right) \right),
\end{align*}
where the first morphism is simply given by restricting sections
of $l_{\dag +}\cO_{S_1\times \Sred}^\beta$ from $\Pred$ to $\Pred^*$ and the isomorphisms are due to the fact that $l$ restricted to $\Pred^*$ is $l_1$, which is proper and affine.

Obviously, the restriction of the Hodge filtration on the module $l_{\dag +}\cO_{S_1\times \Sred}^\beta$ is
the Hodge filtration on the module $l_{1,\dag +}\cO_{S_1\times \Sred}^\beta=l_{1,+}\cO_{S_1\times \Sred}^\beta$.
Since, as has been discussed before, the filtration
$F^H_k \text{DR}^{\bullet+n}_{\Pred/\AA_{\lambda_0}^1\times \Sred}(l_{\dag +}\cO_{S_1\times \Sred}^\beta)$ induces
$F^H_\bullet M_{\dag +}$ and the filtration $F_k^H \text{DR}^{\bullet+n}_{\Pred^*/\AA_{\lambda_0}^1\times \Sred}(l_{1, +}\cO_{S_1\times \Sred}^\beta)$ induces $F_\bullet[-d] M$, we conclude that
$\phi:\Gamma(\AA^1_z\times \Sred, M_{\dag +})\rightarrow \Gamma(\AA^1_z\times \Sred, M)$ sends
$\Gamma(\AA^1_z\times \Sred,F_k^H M_{\dag +})$ to $\Gamma(\AA^1_z\times \Sred,F_k[-d] M)$, or, equivalently,
$\Gamma(\AA^1_z\times \Sred,F_k^{H_{sh}} M_{\dag +})$ to $\Gamma(\AA^1_z\times \Sred,F_k M)$, as required.
\end{proof}

In general, the filtrations on $M_{\dag +}$ and $M$
are not equal, simply because the underlying $\cD_{\AA^1_{\lambda_0} \times \Sred}$-modules are not equal. They become equal after localized partial Fourier transformation as we have shown in Proposition \ref{prop:IsoAfterFourier}. First we will explain that these transformations can be performed at the
filtered level and how the last result can be interpreted in this context.

We will use a general procedure which produces from a filtered $\cD_{\AA^1_{\lambda_0}\times \Sred}$-module $(N,F_\bullet)$ a lattice $\GF_0$ inside $\FL^{\loc}_{\Sred}N$, i.e., an $\cO_{\AA^1_z\times \Sred}$-module which generates $\FL^{\loc}_{\Sred}N$ over $\cD_{\AA^1_z\times \Sred}$.

\begin{defi}\label{def:G0F}(cf. \cite[\S1.d]{Sa8}, \cite[\S A.1]{Sa14})
Let $X$ be a smooth affine variety and let $(N,F_\bullet)$ be a filtered $\cD_{\AA_s^1\times X}$-module, which we identify with its module of global sections. Consider the algebraic microlocalization
$$N[\dd_s^{-1}]:=\CC[s]\langle\dd_s,\dd_s^{-1}\rangle\otimes_{\CC[s]\langle\dd_s\rangle}N.$$
By letting act $\tau$ as $-\dd_s$ and $\dd_\tau$ as $s$, we consider $N[\dd_s^{-1}]$ as a $\cD_X[\tau,\tau^{-1}]\langle\dd_\tau \rangle$-module (which actually coincides with $\FL^{\loc}_X N$). Let now $\whloc$ be the natural localization morphism $\whloc:N\ra N[\dd_s^{-1}]$. Then we define
\begin{equation}\label{eq:DefBrieskorn}
\GF_0\FL^{\loc}_X N:=\sum_{j\geq 0}\dd_s^{-j}\whloc(F_j N),
\end{equation}
notice that then $\GF_0\FL^{\loc}_X N$ has naturally the structure of a $\cRint_{\AA_z^1\times X}$-module. We also put
for any $k\in\ZZ$
$$\GF_k\FL^{\loc}_X N:=z^k\cdot \GF_0\FL^{\loc}_X N=\sum_{j\geq 0}\dd_s^{-(j+k)}\whloc(F_j N)=\sum_{j\geq 0}\dd_s^{-j}\whloc(F_{j+k} N).$$
\end{defi}

There is an interpretation of this construction as a Fourier-Laplace transformation
for $\cRint_{\AA_z^1\times\AA^1_s\times X}$-modules as explained in \cite[Rem. A.3]{Sa14}. Using this interpretation, one can show the following fact.
\begin{lemma}\label{lem:MHMIntoIrrMHMbyFL}
Let $(N,F_\bullet)$ be a filtered $\cD_{\AA_s^1\times X}$-module underlying an element in $\MHM(\AA^1_s\times X, \CC)$ (the abelian category of complex mixed Hodge modules). Then the $\cRint_{\AA^1_z\times X}$-module $G_0^{F_\bullet} \FL^{\loc}_X(N)$
underlies an element of $\IrrMHM(X)$.
\end{lemma}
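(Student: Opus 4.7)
The plan is to reduce the statement to Sabbah's stability theorem for irregular mixed Hodge modules under Fourier-Laplace transformation. The proof has three natural steps: lift the filtered $\cD$-module to a twistor object via Rees construction; interpret $G^F_0\FL^{\loc}_X$ as the (localized) partial Fourier-Laplace of this twistor object in the $s$-direction; and finally invoke Sabbah's theorem to deduce that the result lies in $\IrrMHM(X)$.

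First, I would form the Rees module $R_F N := \bigoplus_k F_k N \cdot z^k$, which is a coherent $\cRint_{\AA^1_z\times \AA^1_s \times X}$-module. Since $(N,F_\bullet)$ underlies an object of $\MHM(\AA^1_s\times X,\CC)$, the pair $(R_F N, \overline{R_F N})$, together with the relevant sesquilinear pairing, underlies an object of $\MTM^{\text{int}}(\AA^1_s\times X)$. In fact, because a complex mixed Hodge module is automatically an irregular mixed Hodge module (the rescaling, tameness and $z$-grading conditions are trivially satisfied for objects coming from $\MHM(\cdot,\CC)$), the Rees construction yields an object in $\IrrMHM(\AA^1_s\times X)$, as recalled in \cite[\S~2]{Sa15}.

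Next, I would apply the partial Fourier-Laplace transformation $\FL^{\loc}_X$ at the twistor level, along the direction $s$. Following \cite[Rem. A.3]{Sa14}, this operation is expressed concretely as follows: one replaces $\dd_s$ with $-\tau=-z^{-1}$ and $s$ with $\dd_\tau$, and then localizes along $\tau=0$ (i.e., $z=\infty$). At the level of Rees modules one computes directly that
$$
\FL^{\loc}_X (R_F N) \;=\; \bigoplus_{j\ge 0}\dd_s^{-j}\whloc(F_j N)\otimes_{\CC[z]}\CC[z,z^{-1}],
$$
whose $\cRint_{\AA^1_z\times X}$-submodule generated by non-negative $z$-degree sections is exactly $G^F_0\FL^{\loc}_X N$ of formula \eqref{eq:DefBrieskorn}. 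This identifies our candidate module with a partial Fourier-Laplace transform taken inside the twistor world.

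The key input then is Sabbah's theorem that the partial Fourier-Laplace transformation preserves $\IrrMHM$, or more specifically that it takes an object of $\MHM(\AA^1_s\times X,\CC)$, viewed as sitting inside $\IrrMHM(\AA^1_s\times X)$ via the inclusion recalled above, to an object of $\IrrMHM(X)$ after the appropriate localization; this is (a mild variant of) \cite[Cor. 0.5]{Sa15} together with the integrability statements of Mochizuki \cite[Ch. 14]{Mo13}. Applying this to $R_F N$ yields the conclusion. The main obstacle, and the point where one must be careful, is precisely the verification that the rescaling condition and the tame behavior at $\tau=0$ required by \cite[Def. 2.52]{Sa15} are indeed preserved under this partial Fourier-Laplace and survive the localization $\whloc$; this is what the machinery of \cite{Sa15, Sa14} is designed to handle, and it is the place where one genuinely uses that $(N,F_\bullet)$ comes from a Hodge module rather than from an arbitrary filtered $\cD$-module.
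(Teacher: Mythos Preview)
Your proposal follows essentially the same route as the paper's proof: pass to the Rees module to land in $\IrrMHM(\AA^1_s\times X)$, perform the Fourier--Laplace transformation at the twistor level, identify the result with $G_0^{F_\bullet}\FL^{\loc}_X(N)$ via \cite[Rem.~A.3]{Sa14}, and conclude by the stability of $\IrrMHM$ under the functors entering in this transformation \cite[Cor.~0.5]{Sa15}. The paper is slightly more careful in that it explicitly defines separate functors $\FL^{\cR}_X$ (on $\cR$-modules, via tensoring with $\cA^{s/z}_{\textup{aff}}$) and $\FL^{\MTM}_X$ (on mixed twistor $\cD$-modules, via tensoring with $\cT^{s/z}$), proves a comparison formula $\textup{For}(\FL^{\MTM}_X(\mathscr{N}))\cong z^{-1}\FL^{\cR}_X(\textup{For}(\mathscr{N}))$, and tracks the resulting $z^{-1}$-shift so that the final identification reads $G_0^{F_\bullet}\FL^{\loc}_X(N)\cong z^{-1}\FL^{\cR}_X(\cR^F N)$; your write-up elides this shift and applies ``$\FL^{\loc}_X$'' directly to the Rees module, which is notationally imprecise but not a conceptual gap.
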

\begin{proof}
We first define the algebraic Fourier-Laplace transformation for integrable $\cR$-modules as in \cite[Defs. 3.2, 3.7]{SevCastReich}. Consider the following diagram:
$$
\begin{tikzcd}
\AA^1_s\times X \ar[hook]{rr}{j} \ar{ddrr}{q}&& \PP^1_s\times X \ar{dd}{\overline{q}} \\ \\
&&X
\end{tikzcd},
$$
where $j$ is the canonical open embedding and $q$ and $\bar{q}$ the respective second projections. Let $\cA^{s/z}_{\textup{aff}}$ the $\cR_{\AA^1_z \times \AA_s  \times X}$-module $\cO_{\AA^1_z \times \AA_s  \times X}$ equipped with the $z$-connection $zd+ds$. Then for an algebraic $\cRint_{\AA^1_z\times \AA^1_s\times X}$-module $\cN$,
we put
$$
\FL^{\cR}_X(\cN):=\cH^0 q_+\left(\cN\otimes \cA^{s/z}_{\textup{aff}}\right)
 \in \textup{Mod}(\cRint_{\AA^1_z\times X}).
$$
Let $D$ be the reduced divisor $(\PP^1_s\backslash \AA^1_s)\times X = \{\infty\}\times X$. Then $\cA^{s/z}_*:= j_* \cA^{s/z}_{\textup{aff}}$ carries a natural structure of an $\cR_{\AA^1_z \times \PP^1_s \times X}(*D)$-module. Let $\cE_*^{s/z}$ be the analytification of $\cA^{s/z}_*$.

As in the case of $\cD$-modules, there is the notion of strict specializability and $V$-filtration for $\cRint$-modules; check \cite[\S 2.1.2]{Mo13} for more details.

Notice that  the meromorphic function $s\in \cO_{\PP^1_s\times X}(*D)$ evidently extends to a map $\PP^1_s\times X \rightarrow \PP^1_s$ whose reduced pole divisor is exactly $D\cong X$, in particular, it is smooth. Then it follows from \cite[Lem. 3.1]{Sa14} (using that, according to their notation, $P_{\textup{red}}=D$ and $\mathbf{e}=1$) that $\cE_*^{s/z}$ is coherent not only over $\cR_{\AA^1_z \times \PP^1_s  \times X}$ but over $V_0\cR_{\AA^1_z \times \PP^1_s  \times X}$ too. By \cite[\S 2.1.2]{Mo13} we know that $\cE_*^{s/z}$ is automatically specializable along $D$ and its corresponding $V$-filtration is trivial. Therefore, by construction, $\cE^{s/z}:=\cE_*^{s/z}[*D]=\cE_*^{s/z}$ (cf. [ibid., \S 3.1.2]). We then know by \cite[Prop. 3.3]{Sa14} that $\cE^{s/z}$ underlies an algebraic, integrable
pure twistor $\cD$-module $\cT^{s/z}$ on $\AA^1_s\times X$. (Cf. \cite[\S 1.6.a]{Sa15} for another discussion on this issue.)

Now suppose that $\cN$ underlies an algebraic, integrable mixed twistor $\cD$-module $\mathscr{N}=(\cN',\cN,C)\in\MTM_{\textup{alg}}^{\textup{int}}(\AA^1_s\times X)$, then we can define
its relative Fourier-Laplace transform
$$
\FL^{\textup{MTM}}_X(\mathscr{N}) := \cH^0 q_* (\mathscr{N}\otimes \cT^{s/z})
\in \MTM_{\textup{alg}}^{\textup{int}}(X).
$$
For notational convenience, for any $\cR$-triple $\mathscr{K}=(\cK',\cK,C)$ we define the forgetful
functor $\textup{For}(\mathscr{K}):=\cK$. Then we have a comparison formula, the proof of which is completely
analogous to \cite[Prop. 3.5]{SevCastReich}:

Let $\mathscr{N}$ be an element in $\MTM_{\textup{alg}}^{\textup{int}}(\AA^1_s\times X)$, then there is an isomorphism
of $\cR_{\AA^1_z\times X}$-modules:
\begin{equation}\label{eq:FLCompar}
\textup{For}(\FL^{\MTM}_X(\mathscr{N})) \cong z^{-1}\FL^{\cR}_X(\textup{For}(\mathscr{N})).
\end{equation}
In other words, if $\cN$ is an $\cR_{\AA^1_z\times \AA^1_s\times X}$-module underlying an algebraic, integrable mixed twistor $\cD$-module on $\AA^1_s\times X$, then
the (shifted) relative algebraic Fourier-Laplace transform $z^{-1}\FL^{\cR}_X(\cN)$ underlies an element of $\MTM_{\textup{alg}}^{\textup{int}}(X)$.

Now let us suppose that we are given a filtered $\cD_{\AA^1_s\times X}$-module $(N,F_\bullet)$
underlying a complex mixed Hodge module on $\AA^1_s\times X$. Then its Rees module
$\cR_F \cN$ is an algebraic $\cR_{\AA^1_z\times \AA^1_s\times X}$-module and underlies an element $\mathscr{N}$ in $\MTM_{\text{alg}}^{\text{int}}(\AA^1_s\times X)$ as well as in $\IrrMHM(\AA^1_s\times X)$. Since all the functors
entering in the definition of $\FL_X^{\MTM}$ preserve the category of irregular Hodge modules by \cite[Cor. 0.5.]{Sa15},
we know by the above comparison result (i.e. by formula \eqref{eq:FLCompar}) that $z^{-1}\FL^{\cR}_X(\cR^{F} N)$ underlies an irregular Hodge module on $X$. Now, by \cite[Rem. A.3]{Sa14} (noting that the argument there is valid for the partial transformation as well), we can identify $\FL^{\cR}_X(\cR^{F} N)$ with $zG_0^{F_\bullet} \FL^{\loc}_X(N)$, so that
$$
G_0^{F_\bullet} \FL^{\loc}_X(N)\cong z^{-1}\FL^{\cR}_X(\cR^{F} N).
$$
This shows that $G_0^{F_\bullet} \FL^{\loc}_X(N)$ underlies an irregular Hodge module on $X$,
as required.

\end{proof}

With these definitions at hand, we have the following consequence of Lemma \ref{lem:InclusionHodgeFilt}:
\begin{coro}
In the above situation, we have
$$
G^{F^{H_{sh}}_\bullet}_0 \FL^{\loc}_{\Sred} M_{\dag +}\subset \GF_0 \FL^{\loc}_{\Sred} M
$$
\end{coro}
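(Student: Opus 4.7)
The plan is to derive this corollary as an immediate formal consequence of Lemma \ref{lem:InclusionHodgeFilt} combined with the definition of the $G^F_0$-lattice. There is essentially no real obstacle: once the definitions are unwound, the inclusion reduces to pushing the filtered inclusion $\phi(F^{H_{sh}}_j M_{\dag +}) \subset F_j M$ through the series expansion defining $G_0^F \FL^{\loc}_{\Sred}$.

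First, I would apply Definition \ref{def:G0F} to the filtered $\cD_{\AA^1_{\lambda_0}\times \Sred}$-modules $(M_{\dag+}, F^{H_{sh}}_\bullet)$ and $(M, F_\bullet)$ (with the variable $s$ specialised to $\lambda_0$), which gives the explicit descriptions
\[
G^{F^{H_{sh}}_\bullet}_0 \FL^{\loc}_{\Sred} M_{\dag +} = \sum_{j\geq 0} \partial_{\lambda_0}^{-j}\,\whloc\bigl(F^{H_{sh}}_j M_{\dag +}\bigr), \qquad \GF_0 \FL^{\loc}_{\Sred} M = \sum_{j\geq 0} \partial_{\lambda_0}^{-j}\,\whloc\bigl(F_j M\bigr).
\]
Next, by functoriality of localisation, the morphism $\phi:M_{\dag +}\to M$ from Proposition \ref{prop:IsoAfterFourierRed} fits into a commutative square with the localisation maps $\whloc$, and its image $\FL^{\loc}_{\Sred}(\phi)$ is an isomorphism of $\cD_{\AA^1_z\times \Sred}$-modules (again by Proposition \ref{prop:IsoAfterFourierRed}). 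In particular $\FL^{\loc}_{\Sred}(\phi)$ is linear with respect to the action of $\partial_{\lambda_0}^{-1}$ (which corresponds to $-z$ on the Fourier side), so it commutes with each of the operators $\partial_{\lambda_0}^{-j}$ appearing in the series above.

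Finally, combining these with the filtered property $\phi\bigl(F^{H_{sh}}_j M_{\dag +}\bigr) \subset F_j M$ established in Lemma \ref{lem:InclusionHodgeFilt}, I would compute
\[
\FL^{\loc}_{\Sred}(\phi)\bigl(G^{F^{H_{sh}}_\bullet}_0 \FL^{\loc}_{\Sred} M_{\dag +}\bigr) = \sum_{j\geq 0}\partial_{\lambda_0}^{-j}\,\whloc\bigl(\phi(F^{H_{sh}}_j M_{\dag +})\bigr) \subset \sum_{j\geq 0}\partial_{\lambda_0}^{-j}\,\whloc(F_j M) = \GF_0 \FL^{\loc}_{\Sred} M.
\]
Identifying $\FL^{\loc}_{\Sred} M_{\dag +}$ with $\FL^{\loc}_{\Sred} M$ via the isomorphism $\FL^{\loc}_{\Sred}(\phi)$ then yields the claimed inclusion.
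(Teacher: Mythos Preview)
Your proof is correct and follows exactly the same approach as the paper's own proof: you unwind Definition~\ref{def:G0F} (formula~\eqref{eq:DefBrieskorn}), apply the filtered inclusion from Lemma~\ref{lem:InclusionHodgeFilt}, and use that $\FL^{\loc}_{\Sred}(\phi)$ is an isomorphism from Proposition~\ref{prop:IsoAfterFourierRed}. The paper states this in a single sentence, whereas you have spelled out the steps explicitly, but the content is identical.
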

\begin{proof}
This is a direct consequence of the definition in formula \eqref{eq:DefBrieskorn}, taking into account
Lemma \ref{lem:InclusionHodgeFilt} and the fact that the filtered morphism $\phi$ induces an isomorphism
of $\cD_{\AA^1_z\times \Sred}$-modules by applying the functor $\FL^{\loc}_{\Sred}$.
\end{proof}

From now on, we will specify the above situation to our main example, where the matrix $A$ is given by
$$
A=
\begin{pmatrix}
  1 & -1 & 0 & 0 &\ldots  & 0  \\
  1 &  0 & -1 & 0 & \ldots & 0 \\
   \vdots &\ldots \\
  1 &  0 & 0 & 0 & \ldots & -1
\end{pmatrix}.
$$
In particular, we have $d=n-1$, and $\Sred = \GG_{m,t}$.
We still write $M_{\dag +}=\cH^0 p_{2,+} l_{\dag\,+}\cO^\beta_{S_1\times \GG_{m,t}}$ and $M=\cH^0{\,'\!}\varphi_+ \cO^\beta_{S_1\times \GG_{m,t}}$.
We seek
to improve the inclusion of the last corollary to an equality of lattices
inside the $\cD_{\AA^1_z \times \GG_{m,t}}$-modules
$\FL^{\loc}_{\gmt} (M_{\dag +})
\cong
\FL^{\loc}_{\gmt} (M)$.
We will follow an argument from the proof of \cite[Lem. 4.7]{Sa8}. In order to do so, we have to make more explicit
the structure of the module $\FL^{\loc}_{\gmt}(M)\cong\FL^{\loc}_{\gmt}(M_{\dag\,+})$,
which is done by the next lemma.
\begin{lemma}
\begin{enumerate}
\item
The singular locus
$\Sigma:=\text{Sing}(M)=\text{Sing}(\cH^0\varphi_+ \cO^\beta_{S_1\times \GG_{m,t}})$
is given by
$$
\Sigma=\bigcup_{\xi\in\mu_n} \{(n\cdot \xi\cdot t',t)\} \subset \AA^1_{\lambda_0} \times \GG_{m,t},
$$
where $t'$ is an $n$-th root of $t$, chosen without loss of generality.
\item
Write $D:=\{0\}\times \GG_{m,t}\subset \AA^1_z\times \GG_{m,t}$. Then
$\FL^{\loc}_{\gmt} (M)$ is
$\cO_{\AA^1_z\times \GG_{m,t}}(*D)$-locally free of rank $n$.
\item
Consider the sheaf
$\widehat{\cO}_{\AA^1_z\times \GG_{m,t}}$, which is the formal completion of $\cO_{\AA^1_z\times \GG_{m,t}}$ along
the divisor $\{0\}\times \GG_{m,t}$. Then we have a decomposition (as sheaves on $\{0\}\times\GG_{m,t}$)
$$
\FL^{\loc}_{\gmt}(M)
\otimes_{\cO_{\AA^1_z\times \GG_{m,t}}(*D)} \widehat{\cO}_{\AA^1_z\times \GG_{m,t}}(*D)
\cong \bigoplus_{\xi\in\mu_n} \widehat{\cE}_\xi \otimes \widehat{\cN}_{\alpha_\xi},
$$
where $\widehat{\cE}_\xi:=(\widehat{\cO}_{\AA^1_z\times \GG_{m,t}}(*D), d-d(n \cdot \xi \cdot  t'/z))$ and
$\widehat{\cN}_{\alpha_\xi}=(\widehat{\cO}_{\AA^1_z\times \GG_{m,t}}(*D), d+\alpha_\xi dz/z)$, with $\alpha_\xi\in \CC$.
\end{enumerate}
\end{lemma}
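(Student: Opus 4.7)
The plan is to reduce each of the three assertions to classical statements about Gauss--Manin systems attached to cohomologically tame Laurent polynomials, applied to the family ${\,'\!}f:S_1\times\gmt\rightarrow\AA^1_{\lambda_0}$.

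For (1), I would identify $M=\cH^0{\,'\!}\varphi_+\cO^\beta_{S_1\times\gmt}$ as a relative Gauss--Manin system whose fibre over $t\in\gmt$ is the Gauss--Manin system of ${\,'\!}f_t:S_1\rightarrow\AA^1_{\lambda_0}$ twisted by the flat connection $d-\kappa(\beta)$ on $\cO^\beta_{S_1}$. Since this twisting is regular on $S_1$ and ${\,'\!}f_t$ is cohomologically tame (Lemma~\ref{lem:StratSmooth}(1)), the singular locus of $M$ in $\AA^1_{\lambda_0}\times\gmt$ is the graph of the set of critical values of ${\,'\!}f_t$ as $t$ varies. Solving the critical point equations
$$
y_k\dd_{y_k}{\,'\!}f_t=-ty_1\cdots y_{n-1}+y_k^{-1}=0,\qquad k=1,\ldots,n-1,
$$
forces $y_1=\cdots=y_{n-1}=y$ with $ty^n=1$, yielding exactly $n$ non-degenerate critical points indexed by $\mu_n$, each with critical value $\pm n\xi\,t^{1/n}$. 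Absorbing the overall sign into the choice of $t'$ gives the claimed formula for $\Sigma$.

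For (2), Lemmas~\ref{lem:NonDegen} and~\ref{lem:StratSmooth}(1) show that ${\,'\!}f_t$ is convenient, non-degenerate, and cohomologically tame for every $t\in\gmt$, uniformly in $t$. By Kouchnirenko's theorem the global Milnor number equals $n!\cdot\vol(\Delta)=n$ for every $t$, and all critical points are Morse. Sabbah's theory of Fourier--Laplace transforms of Gauss--Manin systems of cohomologically tame functions (cf.\ \cite{Sa2,Sa8}) then yields that the localized Fourier--Laplace transform of $M_t$ is $\cO_{\AA^1_z}(*\{0\})$-free of rank $n$; the relative statement, i.e., that $\FL^{\loc}_{\gmt}(M)$ is $\cO_{\AA^1_z\times\gmt}(*D)$-locally free of rank $n$, follows from the uniformity in $t$ of tameness and non-degeneracy, together with the compatibility of $\FL^{\loc}_{\gmt}$ with base change along $\{t\}\hra\gmt$ within Mochizuki's framework \cite{Mo13}.

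For (3), with (2) established, the formal structure of $\FL^{\loc}_{\gmt}(M)$ along $D=\{z=0\}\times\gmt$ is governed by the Levelt--Turrittin decomposition, and the exponential summands are computed by the stationary phase formula for the Fourier--Laplace transform of a cohomologically tame function. Each critical value $c_\xi=\pm n\xi t'$ of ${\,'\!}f_t$ contributes a summand with exponential part whose phase equals (up to the overall sign convention) $-c_\xi/z$, matching the form $d-d(n\xi t'/z)$ that defines $\wh\cE_\xi$. Because each critical point is Morse, the corresponding regular factor has rank one; the monodromy exponent $\alpha_\xi\in\CC$ is determined by the residues of the twist $d-\kappa(\beta)$ at the critical point $y_\xi\in S_1$, via a local microlocal calculation at the corresponding vanishing cycle.

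The main obstacle is to promote the stationary phase decomposition from a pointwise statement (at each fixed $t\in\gmt$) to a global decomposition as $\cRint$-modules on $\{0\}\times\gmt$. This requires that the critical values be pairwise distinct on a Zariski-dense open subset of $\gmt$, which is automatic here since they differ by $n$-th roots of unity times $t^{1/n}\neq0$, and that the stationary phase decomposition behave well in families; the latter is ensured by the uniformity of cohomological tameness and non-degeneracy established in Lemmas~\ref{lem:NonDegen} and~\ref{lem:StratSmooth}, which together guarantee that the formal bundle of vanishing cycles splits compatibly with the parameter $t$.
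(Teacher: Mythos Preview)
Your proposal is correct and arrives at the same conclusions, but the route differs from the paper's in parts (2) and (3).

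For (1), your argument is essentially identical to the paper's: identify the singular locus with the discriminant of ${\,'\!}\varphi$ (since $\cO^\beta_{S_1\times\gmt}$ is smooth) and solve the critical point equations directly.

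For (2) and (3), the paper takes a more economical path. Rather than working fibrewise and then globalizing via base change and uniformity of tameness, the paper observes that the singular locus $\Sigma$ computed in (1) satisfies the non-characteristic assumption (NC) of Douai--Sabbah \cite[Thm.~1.11]{DS}, which gives the locally free statement for the family in one stroke. For (3), instead of invoking stationary phase, the paper simply notes that the critical values $n\xi t'$ (for $\xi\in\mu_n$) are pairwise distinct for every $t\in\gmt$, so the eigenvalues of the pole part of $z^2\nabla_z$ are distinct, and then applies the classical formal decomposition theorem for meromorphic connections with a pole of Poincar\'e rank one and simple leading term \cite[Ch.~III, Thm.~5.7]{Sa4}. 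This bypasses the microlocal/stationary-phase machinery entirely.

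Your approach has the advantage of explaining \emph{why} the exponential factors take the form $d-d(n\xi t'/z)$ (they are the critical values under stationary phase), whereas the paper's argument only guarantees a decomposition into rank-one pieces whose exponential parts are the eigenvalues of the pole matrix; one still has to identify those eigenvalues with the critical values, which the paper leaves implicit. On the other hand, the paper's argument avoids the delicate step of promoting a pointwise Levelt--Turrittin decomposition to a family statement, which in your version requires the extra justification you sketch in your final paragraph.
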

\begin{proof}
\begin{enumerate}
  \item
  It is well known that the singular locus of a Gauss-Manin system, i.e., of the top-cohomology
  of the direct image complex $\varphi_+ \cM$, is nothing but the discriminant of the morphism $\varphi$
  provided that the module $\cM$ is smooth (which is the case here, since $\cM=\cO^\beta_{S_1\times \GG_{m,t}})$.
  Recall (see formula \eqref{eq:LaurPolPn}) that ${\,'\!}\varphi(y_1,\ldots,y_{n-1},t)=\left(\frac{1}{y_1}+\ldots+\frac{1}{y_{n-1}}+t\cdot y_1\cdot\ldots\cdot y_{n-1}, t \right)$.
  One easily checks (see, e.g., \cite[\S 1.B.]{DS2}) that a point $(y_1,\ldots,y_{n-1})\in S_1$ is critical if and only if $y_1=\ldots=y_{n-1}=:y$ and $y^n\cdot t=1$ (and that all critical points are Morse). Then the critical values are as indicated.
  \item This is a direct consequence of the second point of \cite[Thm. 1.11]{DS}, since by the discussion above the singular locus $\Sigma$ satisfies the assumption (NC) of loc. cit. It is also known that the rank of
  $\FL^{\loc}_{\gmt}(M)$
equals the global Milnor number
  of $\varphi$, i.e., the numbers of critical points, which is $n$.
  \item
  This follows from \cite[Ch. III, Thm. 5.7]{Sa4}, since the critical values of $\varphi$, i.e., the eigenvalues
  of the pole part of $z^2\nabla_z$, are distinct for any $t\in \GG_{m,t}$.
\end{enumerate}
\end{proof}

With these preparations, we can state the next result. As has been explained at the end of section \ref{sec:reduction} from page \pageref{page:Roadmap} on, it is the main step to show that the $\cRint_{\AA^1_z\times \gmt}$-module $\wh{\cH}$ underlies
a mixed Hodge module (that is, the content of Theorem \ref{thm:HypIrrMHM}). The explicit description of $\wh{\cH}$ as a cyclic quotient by two operators can be rather easily identified with the object $\GF_0 \FL^{\loc}_{\gmt} M$, as we will see below in the proof of Theorem \ref{thm:HypIrrMHM}, whereas the Hodge theoretic property we want (i.e., the fact that
it is an object in $\IrrMHM(\gmt)$) holds for $G^{F^{H_{sh}}_\bullet}_0
\FL^{\loc}_{\gmt} M_{\dag +}$. Hence we need to identify these two
$\cRint_{\AA^1_z\times\gmt}$-modules.
\begin{thm}\label{thm:EqualFourierFilt}
In the above situation, we have
$$
G^{F^{H_{sh}}_\bullet}_0
\FL^{\loc}_{\gmt} M_{\dag +} =
\GF_0 \FL^{\loc}_{\gmt} M.
$$
\end{thm}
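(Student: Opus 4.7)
The inclusion $G^{F^{H_{sh}}_\bullet}_0\FL^{\loc}_{\gmt}M_{\dag +}\subset\GF_0\FL^{\loc}_{\gmt}M$ is provided by the corollary preceding this theorem, so the plan is to establish the reverse containment. The strategy is to show that both lattices are locally $\cO_{\AA^1_z\times\gmt}$-free of the same rank $n$ inside $\FL^{\loc}_{\gmt}(M)$, and then verify that the induced map on the fibers at $z=0$ is an isomorphism.

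For the Brieskorn lattice $\GF_0\FL^{\loc}_{\gmt}M$, I would invoke Sabbah's $E_1$-degeneration theorem: by Lemma \ref{lem:StratSmooth}(1) each member ${\,'\!}f_t$ of the family is cohomologically tame, which yields strictness of the Brieskorn filtration $F_\bullet M$ in a family over $\gmt$. This makes $\GF_0$ locally $\cO_{\AA^1_z\times\gmt}$-free of rank equal to the global Milnor number of ${\,'\!}f_t$, which by the preceding lemma is $n$. For the Hodge lattice $G^{F^{H_{sh}}_\bullet}_0\FL^{\loc}_{\gmt}M_{\dag +}$, freeness follows from the general Fourier-Laplace construction for pure polarizable Hodge modules (cf.\ \cite[\S 1.d]{Sa8} and \cite[App.~A]{Sa14}); since inverting $z$ in both lattices produces the same $\cO[z^\pm]$-module $\FL^{\loc}_{\gmt}(M)$, the rank on this side is also $n$.

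With both lattices locally free of rank $n$ and a containment, the quotient $\GF_0/G^{F^{H_{sh}}_\bullet}_0$ is a torsion coherent $\cO_{\AA^1_z\times\gmt}$-module supported along $\{z=0\}\times\gmt$. To show it vanishes, I would compare the $z=0$ fibers. On the Brieskorn side, $\GF_0/z\GF_0$ is classically identified with the relative Jacobian algebra of ${\,'\!}f$ over $\gmt$ and is $\cO_\gmt$-free of rank $n$. On the Hodge side, $G^{F^{H_{sh}}_\bullet}_0/zG^{F^{H_{sh}}_\bullet}_0$ equals the top graded piece of the shifted Hodge filtration on $M_{\dag +}$, which, by Saito's description of the Hodge filtration on intermediate extensions combined with the stratified smoothness of $\overline{\varphi}$ along its boundary (Lemma \ref{lem:StratSmooth}(2)--(3)), can be represented by the same relative top forms on $S_1\times\gmt$ that generate the Brieskorn side. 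Following \cite[Lem.~4.7]{Sa8}, this representative-level identification shows that the induced map on $z=0$ fibers is surjective, completing the equality.

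The principal obstacle I anticipate is this last step: bridging the abstract, Hodge-theoretic description of $G^{F^{H_{sh}}_\bullet}_0/zG^{F^{H_{sh}}_\bullet}_0$ with the concrete, form-theoretic description of $\GF_0/z\GF_0$. The bridge is provided by the filtered morphism $\phi:M_{\dag +}\to M$ from Lemma \ref{lem:InclusionHodgeFilt}, together with the non-characteristic property of the closed embedding $\iota$ established in Proposition \ref{prop:IsoAfterFourierRed}, which lets one reduce the question to a direct computation with relative top forms on $S_1\times\Sred$.
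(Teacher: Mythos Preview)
Your overall plan—use the established inclusion, show both lattices are locally free of the same rank, and then argue equality by a Nakayama-type comparison of the $z=0$ fibres—is a reasonable-looking strategy, but it is \emph{not} the route the paper takes, and the final step contains a genuine gap.

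The problem is your identification of $G^{F^{H_{sh}}_\bullet}_0/zG^{F^{H_{sh}}_\bullet}_0$. This is not ``the top graded piece of the shifted Hodge filtration'': the Rees-type construction in Definition~\ref{def:G0F} mixes all filtration steps, and after passing to the localized Fourier transform there is no reason for the $z=0$ fibre to reduce to a single Hodge graded piece or to admit a description by explicit relative top forms. Your appeal to \cite[Lem.~4.7]{Sa8} here is a misreading: that lemma does not produce a surjectivity statement on $z=0$ fibres. It is a formal \emph{microlocal} comparison, and that is precisely how the paper uses it. Without an independent computation of the Hodge-side fibre (which would amount to knowing the Hodge filtration on $M_{\dag+}$ explicitly, the very thing one is trying to avoid), the surjectivity you need is not established. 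The filtered morphism $\phi$ and the non-characteristic property of $\iota$ that you invoke at the end only give back the inclusion you started with; they do not by themselves control the $z=0$ fibre of the Hodge lattice.

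The paper instead argues formally along $z=0$. Since both lattices agree off the divisor $D=\{z=0\}\times\gmt$ and the formal completion $\widehat{\cO}_{\AA^1_z\times\gmt}$ is flat, it suffices to prove equality after tensoring with $\widehat{\cO}_{\AA^1_z\times\gmt}$. The key input you are missing is the formal decomposition (the lemma immediately preceding the theorem):
\[
\FL^{\loc}_{\gmt}(M)\otimes\widehat{\cO}_{\AA^1_z\times\gmt}(*D)\;\cong\;\bigoplus_{\xi\in\mu_n}\widehat{\cE}_\xi\otimes\widehat{\cN}_{\alpha_\xi},
\]
which holds because the critical values of ${\,'\!}f_t$ are pairwise distinct for every $t$. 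With this decomposition in hand, both lattices become \emph{formal microlocal} filtered direct images under ${'\!}p_2$ of modules that agree on $\Pred^*$ (this is the actual content of \cite[Lem.~4.7]{Sa8}). The non-characteristic property along the boundary $\Pred\setminus\Pred^*$ (Proposition~\ref{prop:IsoAfterFourierRed} and Lemma~\ref{lem:StratSmooth}) then kills the contributions from infinity in the microlocal direct image, forcing the two formal lattices to coincide. No fibre-by-fibre surjectivity argument is needed.
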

\begin{proof}
We have already proved the inclusion
$$
G^{F^{H_{sh}}_\bullet}_0
\FL^{\loc}_{\gmt} M_{\dag +} \subset
\GF_0 \FL^{\loc}_{\gmt} M
$$
of $\cO_{\AA^1_z\times \GG_{m,t}}$-modules. Since both sheaves coincide outside the divisor
$D=\{0\}\times \GG_{m,t}$, and since
$\widehat{\cO}_{\AA^1_z\times \GG_{m,t}}$ is $\cO_{\left(\AA^1_z\times \GG_{m,t},D\right)}$-flat,
it is therefore sufficient to show that
$$
G^{F^{H_{sh}}_\bullet}_0 \FL^{\loc}_{\gmt} M_{\dag +} \otimes \widehat{\cO}_{\AA^1_z\times \GG_{m,t}}
=
\GF_0 \FL^{\loc}_{\gmt} M
\otimes \widehat{\cO}_{\AA^1_z\times \GG_{m,t}}.
$$
This follows as in the proof of \cite[Lem. 4.7]{Sa8}: Using the formal
decomposition result from the last Lemma, both modules can be interpreted as microlocal filtered direct images under ${{'\!}p}_2$
of two modules which coincide on $\Pred^*$. In these direct images, the contributions from $\Pred\backslash \Pred^*$
vanish by the last statement of Proposition \ref{prop:IsoAfterFourierRed} and Lemma \ref{lem:StratSmooth} (notice that inside $P=\PP^n \times \AA^1_{\lambda_0} \times S_2$,
we have $\Pred^*\cap \Gamma X \cong S_1 \times \Sred$ where we see $\Sred$ as a subspace of $S_2$ via the embedding $\iota$), and therefore both modules are equal.
\end{proof}

We are finally able to complete the proof of Theorem \ref{thm:HypIrrMHM}. It remains
to show that the $\cRint_{\AA^1_z\times\gmt}$-module
$$
\wh{\cH} = \frac{\cRint_{\AA^1_z\times\gmt}}{\left( z^2\partial_z+ntz\partial_t-z\sum_{i=1}^n \alpha_i), \prod_{i=1}^n z(t\partial_t-\alpha_i)-t
\right)}
$$
associated to
the purely irregular hypergeometric $\cD_{\gmt}$-module $\cH(\alpha_i,\emptyset)$ underlies
an object of the category $\IrrMHM(\gmt)$.

\label{page:MainProof}
\begin{proof}[End of the proof of theorem \ref{thm:HypIrrMHM}]
Let us assume first that $\alpha_1=0$, denote by $\alpha$ the vector $(\alpha_2,\ldots,\alpha_n)$ and put $\alpha_0:=0$.
From Lemma \ref{lem:RhypGKZ} and Proposition \ref{prop:TwdeRham} we conclude that
\begin{align*}
  \wh\cH& \cong \iota^+ \cH^0\left(\pi_{2,*}\Omega_{S_1\times S_2/S_2}^{\bullet+d}[z],z\left(d-\kappa(\alpha)\wedge\right)-df\wedge\right) \\
  & \cong \cH^0\left(\pi_{2,*}\Omega_{S_1\times \gmt/\gmt}^{\bullet+d}[z],z\left(d-\kappa(\alpha)\wedge\right)-d{\,'\!}f\wedge\right),
\end{align*}
recall that  ${\,'\!}f$ is the first component of ${\,'\!}\varphi$, as written in (\ref{eq:LaurPolPn}). Finally, it is easy to see from Definition \ref{def:G0F} that we have
$$
\cH^0\left(\pi_{2,*}\Omega_{S_1\times \gmt/\gmt}^{\bullet+d}[z],z\left(d-\kappa(\alpha)\wedge\right)-d{\,'\!}f\wedge\right)
\cong
\GF_0 \FL^{\loc}_{\gmt} \cH^0 {\,'\!}\varphi_+ \cO^\alpha_{S_1\times \gmt}
=
\GF_0 \FL^{\loc}_{\gmt} M.
$$

Since by Theorem \ref{thm:EqualFourierFilt} we can further conclude
$$
\wh\cH\cong G^{F^{H_{sh}}_\bullet}_0 \FL^{\loc}_{\gmt}M_{\dag +},
$$
we obtain that $\wh\cH$ underlies an element of $\IrrMHM(\GG_{m,t})$ by Lemma \ref{lem:MHMIntoIrrMHMbyFL} 
(recall that $M_{\dag +}$ underlies a pure polarizable complex Hodge module).
Restricting $\wh\cH$ to $z=1$ we get the original $\cD_{\GG_{m,t}}$-module $\cH(\alpha_i;\emptyset)$.

Assume now that $\alpha_1\neq0$. The tensor product of $\cRint_{\AA_z^1\times \GG_{m,t}}$-modules $\wh\cH\otimes_{\cO_{\AA_z^1\times\GG_{m,t}}}\wh\cK_{-\alpha_1}$ gives rise to the corresponding tensor product of twistor $\cD$-modules on $\GG_{m,t}$. This product can be presented as $\wh\cH(\alpha'_i;\emptyset)$, where $\alpha'_i=\alpha_i-\alpha_1$ for every $i$. Reasoning as above, since $\alpha'_1=0$, such tensor product is an irregular mixed Hodge module. Since $\wh\cK_{\alpha_1}$ is the faithful image of a mixed Hodge module on $\GG_{m,t}$, the tensor product with it preserves the condition of being in $\IrrMHM(\GG_{m,t})$ due to \cite[Cor. 0.5]{Sa15}, and so is the case of our original $\cRint_{\AA_z^1\times \GG_{m,t}}$-module
$$\wh\cH\cong\wh\cH(\alpha'_i;\emptyset)\otimes_{\cO_{\AA_z^1\times\GG_{m,t}}}\wh\cK_{\alpha_1}.$$
\end{proof}

\section{The irregular Hodge filtration}

In this section we will prove the second main result of this paper. Let us recall the notations used above. For a positive integer number $n$, and $\alpha_1,\ldots,\alpha_n$ real numbers, we consider the hypergeometric $\cD_{\GG_{m,t}}$-module $\cH=\cH(\alpha_i;\emptyset)$ and its associated twistor $\cD$-module $\wh\cH$ on $\GG_{m,t}$ (we will denote by the same symbol the underlying hypergeometric $\cRint_{\AA_z^1\times\GG_{m,t}}$-module). From \cite[Thm. 0.7]{Sa15} and Theorem \ref{thm:HypIrrMHM} we know that there exists a unique irregular Hodge filtration of $\cH$. We provide it in Theorem \ref{thm:HodgeData} below.

Let $\wh\cM$ a twistor $\cD$-module on $X$, and call its associated $\cR_{\AA_z^1\times X}$-module the same way. If the $\cR$-module $\wh\cM$ is integrable, good and well-rescalable (\hspace{-.5pt}\cite[Def. 2.19]{Sa15}), we can define the irregular Hodge filtration of the underlying $\cD_X$-module (say $\cM$) following [ibid., Def. 2.22].

In our particular context, let us use the following notation (cf. \cite[Not. 2.1]{Sa15}) for the sake of brevity: We will write $\cX:=\AA_z^1\times \GG_{m,t}$, $\ttheta\cX=\cX\times\mathds{G}_{m,\theta}$, $\ttau\cX=\cX\times\AA_\tau^1$ and $\ttau \cX_0=\cX\times\{\tau=0\}$, where $\theta=1/\tau$.

Let us summarize the process we follow to achieve our goal. We must first consider the rescaling of $\wh\cH$: this is the inverse image $\ttheta\wh\cH:=\mu^*\cH$ (as $\cO_{\ttheta\cX}$-module), endowed with a natural action of $\cRint_{\ttheta\cX}$ as depicted in \cite[2.4]{Sa15} (note that $\theta=\tau^{-1}$), where $\mu$ is the morphism given in [ibid., Not. 2.1] by
$$\begin{array}{rrcl}
\mu:&\ttheta\cX&\ra&\cX\\
&(z,t,\theta)&\mapsto&(z\theta,t).
\end{array}$$
This is done right below. Then we have to invert $\theta$ to obtain an $\cRint_{\ttau\cX}(*\ttau \cX_0)$-module $\ttau\wh\cH$, to work in the context of \cite[\S 2.3]{Sa15}. Finally, the irregular Hodge filtration is obtained from a suitable $V$-filtration along the divisor $\tau=0$ defined on $\ttau\wh\cH$, which is called $\ttau V$-filtration (the new symbol $\ttau V$ is to make clear the variety over which we are working; note the same convention in [ibid.], from Remark 2.20 on). We will actually \emph{define} a filtration on $\ttau\wh\cH$ in Definition \ref{defi:tauVfilt}, and then prove that it equals the $\ttau V$-filtration in Proposition \ref{prop:tauVfiltr}, following \cite[\S 2.1.2]{Mo13}.

\begin{prop}\label{prop:rescaling}
Recall that we could write $\wh\cH$ as the $\cRint_{\cX}$-module $\cRint_{\cX}/(P,H)$, where $P$ and $H$ were, respectively,
$$z^2\dd_z+nzt\dd_t+\gamma z\,\text{ and }\,\prod_{i=1}^nz(t\dd_t-\alpha_i)-t,$$
for certain value of $\gamma$. Then, $\ttheta\wh\cH=\cRint_{\ttheta\cX}/(P,\ttheta R,\ttheta H)$, with $P$ as before and
$$\ttheta R=z^2\dd_z-z\theta\dd_\theta\,\text{ and }\,\ttheta H=\prod_{i=1}^nz\theta(t\dd_t-\alpha_i)-t.$$
\end{prop}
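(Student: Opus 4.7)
The plan is to unwind Sabbah's rescaling definition from \cite[\S 2.4]{Sa15} and verify the three relations directly on the natural cyclic generator $\ttheta[1] := 1\otimes [1]$ of $\mu^*\wh\cH$. The key feature of the rescaling that I will invoke is that it endows $\mu^*\wh\cH$ with a $\cRint_{\ttheta\cX}$-action via a substitution recipe: for each operator $Q\in\cRint_\cX$ annihilating $[1]$, one obtains a corresponding operator $\widetilde Q\in\cRint_{\ttheta\cX}$ annihilating $\ttheta[1]$, where $\widetilde Q$ is obtained from $Q$ by replacing each coefficient occurrence of $z$ by $z\theta$ and interpreting $z^2\dd_z$ in the presence of the universal relation $\ttheta R=z^2\dd_z-z\theta\dd_\theta$. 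The relation $\ttheta R$ itself is intrinsic to the rescaling construction, reflecting that the $z$-coordinate on $\cX$ pulls back through $\mu$ to $z\theta$, so that its action involves both $z$- and $\theta$-derivatives on $\ttheta\cX$; it thus automatically annihilates $\ttheta[1]$.

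Next, I will apply the recipe to the two defining operators of $\wh\cH$. For $H=\prod_{i=1}^n z(t\dd_t-\alpha_i)-t$, the substitution $z\mapsto z\theta$ yields $\widetilde H=\prod_{i=1}^n z\theta(t\dd_t-\alpha_i)-t=\ttheta H$, in which the $-t$ term is untouched as it has no $z$-factor, and no common power of $\theta$ can be extracted to simplify further. For $P=z^2\dd_z+nzt\dd_t+\gamma z$, substitution gives $\widetilde P=\theta z^2\dd_z+nz\theta t\dd_t+\gamma z\theta=\theta P$ (after rewriting $z^2\dd_z$ via $\ttheta R$ if necessary), and since $\theta$ is invertible on $\ttheta\cX=\cX\times\mathds{G}_{m,\theta}$, the relation $\widetilde P\cdot\ttheta[1]=0$ is equivalent to $P\cdot\ttheta[1]=0$. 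This is what accounts for the asymmetry in the statement: $P$ appears unchanged because the invertibility of $\theta$ allows a cancellation, while $H$ must be replaced by $\ttheta H$ precisely because the constant $-t$ term blocks any analogous simplification.

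To finish, I would verify that $(P,\ttheta R,\ttheta H)$ exhausts the annihilator of $\ttheta[1]$, so that the natural surjection $\cRint_{\ttheta\cX}/(P,\ttheta R,\ttheta H)\sra\ttheta\wh\cH$ is an isomorphism. Since $\mu$ is smooth, $\mu^*$ is flat and $\ttheta\wh\cH$ has the same generic rank (namely $n$) over $\cO_{\ttheta\cX}[z^{-1}]$ as $\wh\cH$ does over $\cO_\cX[z^{-1}]$. On the other side, using $\ttheta R$ to trade $z\theta\dd_\theta$ for $z^2\dd_z$ and then imposing $P$ and $\ttheta H$, a direct rank count shows that $\cRint_{\ttheta\cX}/(P,\ttheta R,\ttheta H)$ has the same generic rank, forcing the surjection to be injective as well. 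The main obstacle will be the careful translation of Sabbah's abstract rescaling definition into the concrete substitution recipe used above, in particular the consistent handling of the $z^2\dd_z$ generator, which interacts nontrivially with both the substitution $z\mapsto z\theta$ and the universal relation $\ttheta R$.
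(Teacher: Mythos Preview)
Your approach is essentially the same as the paper's, and your computations of $\ttheta H$ and $\theta P$ are correct. The paper makes your ``substitution recipe'' precise by factoring $\mu=p\circ\phi$, where $p:\ttheta\cX\to\cX$ is the projection and $\phi:\ttheta\cX\to\ttheta\cX$ is the automorphism $(z,t,\theta)\mapsto(z\theta,t,\theta)$. Then $p^*\wh\cH=\cRint_{\ttheta\cX}/(z\theta\dd_\theta,P,H)$ (the new relation records that the pullback is constant in $\theta$), and applying $\phi^*$ via the chain rule sends $z\theta\dd_\theta\mapsto-\theta\cdot\ttheta R$, $P\mapsto\theta P$, and $H(z,t,\dd_t)\mapsto H(z\theta,t,\dd_t)=\ttheta H$, exactly as you found. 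This factorisation resolves the vagueness you flag around $z^2\dd_z$ (your parenthetical ``after rewriting $z^2\dd_z$ via $\ttheta R$ if necessary'' is precisely where the chain rule for $\phi$ does the work), and it also makes your final rank argument unnecessary: since $\phi$ is an automorphism, $\phi^*$ is an equivalence, so the cyclic presentation is automatically exact rather than merely surjective.
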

\begin{proof}
The morphism $\mu$ can be decomposed as $p\circ\phi$, where $p$ is the canonical projection from $\ttheta\cX$ to $\cX$ and $\phi$ is the automorphism of $\ttheta\cX$ given by $(z,t,\theta)\mapsto(z\theta,t,\theta)$. Then, in the category of $\cO_{\ttheta\cX}$-modules we have that
$$\mu^*\wh\cH\cong\phi^*p^*\wh\cH\cong\phi^*\cRint_{\ttheta\cX}/(z\theta\dd_\theta, P,H(z,t,\dd_t))\cong\cRint_{\ttheta\cX}/(z^2\dd_z-z\theta\dd_\theta,P,H(z\theta,t,\dd_t)),$$
by the chain rule.

What remains now is to prove the compatibilities of \cite[2.4]{Sa15} among the actions of $\cRint_{\ttheta\cX}$ on $\cRint_{\ttheta\cX}/(P,\ttheta R,\ttheta H)$, seen as $\mu^*\wh\cH=\mu^{-1}\wh\cH\otimes_{\mu^{-1}\cO_{\cX}}\cO_{\ttheta\cX}$.
They are just a consequence of the presence of $z^2\dd_z-z\theta\dd_\theta$ in the ideal with which we take the quotient in $\mu^*\wh\cH$ and how $z$, $z\dd_i$ or $z^2\dd_z$ act on both factors of the tensor product. For instance, if we multiply by $z$ at the right-hand one is the same as if we multiply by $z\theta$ at the left-hand one.
\end{proof}

\begin{rem}\label{rem:chi}
Let $i_{\tau=z}$ be the inclusion $\mathds{G}_{m,z}\times\GG_{m,t}\hra\ttheta\cX$ given by $(z,t)\mapsto(z,t,\tau)$. Note that, according to the fourth point of \cite[Lem. 2.5]{Sa15}, we must have $i_{\tau=z}^*\ttheta\wh\cH\cong\pi^{0,+}\cH$ as $\cR_{\mathds{G}_{m,z}\times\GG_{m,t}}$-modules, with $\pi^0$ being the projection $\mathds{G}_{m,z}\times\GG_{m,t}\ra \GG_{m,t}$. Indeed, we have
\begin{align*}
i_{\tau=z}^*\ttheta\wh\cH = \cRint_{\ttheta\cX}/(P,\ttheta R,\ttheta H,\theta z-1)
& \cong
\cO_{\mathds{G}_{m,\theta}\times \AA^1_z\times \GG_{m,t}}\langle zt\dd_t\rangle/(H_1,\theta z-1) \\ \\
& \cong
\cO_{\mathds{G}_{m,\theta}\times \mathds{G}_{m,z} \times \GG_{m,t}}\langle zt\dd_t\rangle/(H_1,\theta -z^{-1}) \\ \\
& \cong
\cO_{\mathds{G}_{m,z}\times \GG_{m,t}}\langle zt\dd_t\rangle/(H_1) \\ \\
& \cong \cO_{\mathds{G}_{m,z}\times \GG_{m,t}} \otimes_{\cO_{\GG_{m,t}}} \cH  \cong\pi^{0,+}\cH,
\end{align*}
where $H_1$ is the result of replacing $z$ by 1 at the expression for $H$.
\end{rem}

As said above, in order to continue, we must pass from $\ttheta\cX$ to $\ttau\cX$. Therefore, we invert $\theta$ and extend $\tau$ to the affine line to get a $\cRint_{\ttau\cX}(*\ttau \cX_0)$-module. In other words, call $\inv:\mathds{G}_{m,\theta}\rightarrow\mathds{G}_{m,\tau}$ the inversion operator $\theta\mapsto\theta^{-1}=\tau$ and $j:\mathds{G}_{m,\tau}\hookrightarrow\AA_{\tau}^1$ the canonical inclusion. From now on, we will denote by $\ttau\wh\cH$ the $\cRint_{\ttau\cX}(*\ttau \cX_0)$-module $(\id_{\cX}\times(j\circ\inv))_*\ttheta\wh\cH$. By virtue of Proposition \ref{prop:rescaling} we can write $\ttau\wh\cH$ as the $\cRint_{\ttau\cX}(*\ttau \cX_0)$-module $\ttau\wh\cH=\cRint_{\ttau\cX}(*\ttau \cX_0)/(P,\ttau R,\ttau H)$, with $P$ as before and
$$\ttau R=z^2\dd_z+z\tau\dd_\tau\,\text{ and }\,\ttau H=\prod_{i=1}^n\frac{z}{\tau}(t\dd_t-\alpha_i)-t.$$

\begin{lemma}\label{lem:ConnHypResc}
For each $k=0,\ldots,n-1$, let $Q_k$ be the operator
$$Q_k=(-n)^k\prod_{j=1}^k\frac{z}{\tau}(t\dd_t-\alpha_j),$$
where the empty product must be understood as one. Then the $Q_k$ form a basis of $\ttau\wh\cH$ as an $\cO_{\ttau\cX}(*\ttau \cX_0)$-module. The integrable connection arising from the $\cRint_{\ttau\cX}(*\ttau \cX_0)$-module structure associated with $\ttau\wh\cH$ has the following matrix expression with respect to that basis:
$$\nabla\underline Q=\underline Q\left(\left(\tau A_0+zA_\infty\right)\frac{dz}{z^2}+\left(-\tau A_0+zA'_\infty\right)\frac{dt}{nzt}-\left(\tau A_0+zA_\infty\right)\frac{d\tau}{z\tau}\right),$$
where $A_0$, $A'_\infty$ and $A_\infty$ are the matrices
$$A_0=\left(\begin{array}{cccc}
0& & & (-n)^nt\\
1&\ddots& & 0\\
 &\ddots&0&\vdots\\
 & & 1& 0\end{array}\right),\,A'_\infty=\diag(n\alpha_1,\ldots,n\alpha_n)$$
$$\text{\emph{and }}\,A_\infty=\diag(0,1,\ldots,n-1)-\gamma I_n-A'_\infty.$$
\end{lemma}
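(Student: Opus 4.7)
The plan is to verify the claim by direct computation inside the cyclic presentation $\ttau\wh\cH = \cRint_{\ttau\cX}(*\ttau\cX_0)/(P,\ttau R,\ttau H)$, and then deduce the basis property from the already-known generic rank of $\wh\cH$. The algebraic backbone is that $z/\tau$ commutes with each $t\partial_t - \alpha_j$ and these commute pairwise, so one can write
$$Q_k = (-n)^k (z/\tau)^k \prod_{j=1}^k(t\partial_t - \alpha_j),\qquad Q_{k+1} = -n\cdot(z/\tau)(t\partial_t - \alpha_{k+1})\cdot Q_k,$$
the second identity reading $z(t\partial_t - \alpha_{k+1})Q_k = -(\tau/n)Q_{k+1}$ whenever $k < n-1$. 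In addition, the relation $\ttau H\cdot 1 = 0$ forces $Q_n = (-n)^n t\cdot Q_0$ inside the module.

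I would then compute the action of $zt\partial_t$, $z^2\partial_z$ and $z\tau\partial_\tau$ on each $Q_k$ and match it to the stated matrix. For the $t$-direction, writing $zt\partial_t = z(t\partial_t - \alpha_{k+1}) + z\alpha_{k+1}$ gives $zt\partial_t\cdot Q_k = z\alpha_{k+1} Q_k - (\tau/n)Q_{k+1}$ for $k < n-1$, and, substituting $Q_n = (-n)^n t\cdot Q_0$ together with the identity $-(-n)^n/n = (-n)^{n-1}$, $zt\partial_t\cdot Q_{n-1} = z\alpha_n Q_{n-1} + (-n)^{n-1}\tau t\cdot Q_0$; this is exactly the matrix $(-\tau A_0 + zA'_\infty)/n$. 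For the $z$-direction, the commutators $[z^2\partial_z, z/\tau] = z^2/\tau$ and $[P, t\partial_t - \alpha_j] = 0$ combined with the Leibniz rule give $[P,Q_k] = kz\cdot Q_k$; since $P\cdot 1 = 0$ in the module, this implies $PQ_k = kzQ_k$, whence $z^2\partial_z Q_k = (k-\gamma)zQ_k - n\cdot zt\partial_t Q_k$, and inserting the $t$-formula recovers $\tau A_0 + zA_\infty$. For the $\tau$-direction, $\ttau R = z(z\partial_z + \tau\partial_\tau)$ commutes both with $z/\tau$ and with $t\partial_t$, so $[\ttau R, Q_k] = 0$ and therefore $\ttau R\cdot Q_k = 0$, i.e. $z\tau\partial_\tau Q_k = -z^2\partial_z Q_k$, which is precisely the $-d\tau/(z\tau)$ coefficient claimed.

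For the basis property, the three computations above (together with the trivial identities $z\partial_t = t^{-1}\cdot zt\partial_t$ and $z\partial_\tau = \tau^{-1}\cdot z\tau\partial_\tau$, valid in $\cO_{\ttau\cX}(*\ttau\cX_0)$) show that the $\cO_{\ttau\cX}(*\ttau\cX_0)$-span of $Q_0,\ldots,Q_{n-1}$ is stable under every generator of $\cRint_{\ttau\cX}$; as it contains the cyclic vector $Q_0 = 1$, it equals $\ttau\wh\cH$. Since $\wh\cH$ is $\cO_{\AA_z^1\times\gmt}$-locally free of rank $n$ (by the rank statement for the GKZ $\cR$-module attached to the projective-space matrix, where $n!\operatorname{vol}(\Delta) = n$), and this rank is preserved by the rescaling plus localization along $\tau = 0$, a surjection between free modules of the same rank argument forces $Q_0,\ldots,Q_{n-1}$ to be an $\cO_{\ttau\cX}(*\ttau\cX_0)$-basis. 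The main bookkeeping nuisance will be the $k = n-1$ case, where $\ttau H$ must be invoked to reduce $Q_n$ to $(-n)^n t\cdot Q_0$ and the sign identity $-(-n)^n/n = (-n)^{n-1}$ carefully tracked in order to recover the top-right entry $(-n)^n t$ of $A_0$.
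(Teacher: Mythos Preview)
Your proof is correct and follows essentially the same strategy as the paper's: direct verification in the cyclic presentation $\ttau\wh\cH=\cRint_{\ttau\cX}(*\ttau\cX_0)/(P,\ttau R,\ttau H)$, using the recursion $Q_{k+1}=-n(z/\tau)(t\partial_t-\alpha_{k+1})Q_k$ together with the relation $\ttau H\cdot 1=0$ at $k=n-1$. The only tactical differences are that the paper computes $z\tau\partial_\tau\,Q_k$ by direct expansion and then deduces $z^2\partial_z\,Q_k$ from $\ttau R$ (you do the reverse via the commutator $[P,Q_k]=kzQ_k$), and that the paper argues linear independence by degree in $zt\partial_t$ whereas you invoke the known rank~$n$ of $\wh\cH$; both variants are sound.
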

\begin{proof}
We can use the expressions for $\ttau R$ and $P$ to replace the classes of $z\tau\dd_\tau$ and $z^2\dd_z$, respectively, in terms of $zt\dd_t$. If we extend for a moment the definition of the $Q_k$ for values of $k$ greater than $n-1$ taking $Q_k:=\left(z\tau^{-1}t\dd_t\right)^{k-n+1}Q_{n-1}$, we see that $\ttau\wh\cH$ is generated as a $\cO_{\ttau\cX}(*\ttau \cX_0)$-module by the $Q_k$, for $k\geq0$ (since it is obviously generated by the powers of $zt\dd_t$, and those can be expressed by the $Q_k$). If we focus now at the degree in $zt\dd_t$ of the generators, we can use $\ttau H$ to get rid of any $Q_k$ with $k\geq n$. Since $\deg_{zt\dd_t}Q_k=k$, they must be linearly independent over $\cO_{\ttau\cX}(*\ttau \cX_0)$ and so they are a basis.

Let now $k<n-1$. Then from the relation $-nz/\tau(t\dd_t-\alpha_{k+1})Q_k=Q_{k+1}$ we can write that $nzt\dd_tQ_k=-\tau Q_{k+1}+nz\alpha_{k+1}Q_k$. Now if $k=n-1$, then
$$-nz/\tau(t\dd_t-\alpha_n)Q_{n-1}=(-n)^n\prod_{j=1}^nz/\tau(t\dd_t-\alpha_j)=(-n)^nt.$$
This gives us the second summand of the formula above in the statement.

The first one is a consequence of the last one and the fact that the class of $z^2\dd_z+z\tau\dd_\tau$ vanishes in $\ttau\wh\cH$; let us show the expression for the latter.

Take again $k<n-1$. Then,
$$z\tau\dd_\tau Q_k=(-n)^kz\tau\dd_\tau \tau^{-k}\prod_{j=1}^kz(t\dd_t-\alpha_j)=(-n)^kz(-k\tau^{-k}+\tau^{-k+1}\dd_\tau) \prod_{j=1}^kz(t\dd_t-\alpha_j)=$$
$$=-kzQ_k+Q_kz\tau\dd_\tau=-kzQ_k+Q_kz(nt\dd_t+\gamma)=z(nt\dd_t+\gamma-k)Q_k=-\tau Q_{k+1}+z(n\alpha_{k+1}+\gamma-k)Q_k.$$
The analogous calculation for $k=n-1$ gives us that $z\tau\dd_\tau Q_{n-1}=-\tau(-n)^nt+z(n\alpha_n+\gamma-(n-1))Q_{n-1}$.
\end{proof}

\begin{defi}\label{defi:tauVfilt}
For each $\alpha\in\RR$, let us define the following subsets of $\ttau\wh\cH$:
$$\ttau U_\alpha\ttau\wh\cH:=\left\{\sum_{k=0}^{n-1}f_k\tau^{\nu_{k}}Q_k\,:\, f_k\in\cO_{\ttau\cX}\text{ , }\,\max(k-n\alpha_{k+1}-\gamma-\nu_k)\leq\alpha\right\},$$
$$\ttau U_{<\alpha}\ttau\wh\cH:=\left\{\sum_{k=0}^{n-1}f_k\tau^{\nu_k}Q_k\,:\, f_k\in\cO_{\ttau\cX}\text{ , }\,\max(k-n\alpha_{k+1}-\gamma-\nu_k)<\alpha\right\}.$$
\end{defi}

\begin{rem}\label{rem:tauVfilt}
Note that the $\ttau U_\alpha\ttau\wh\cH$ form an increasing filtration of $\ttau\wh\cH$, indexed by the real numbers but with a discrete set of jumping numbers, such that $\tau\ttau U_\alpha\ttau\wh\cH=\ttau U_{\alpha-1}\ttau\wh\cH$ for any $\alpha$. The graded piece associated with $\alpha$ is $\Gr_\alpha^{\ttau U}\ttau\wh\cH=\ttau U_\alpha\ttau\wh\cH/\ttau U_{<\alpha}\ttau\wh\cH$.

In the definition of the $\ttau U_\alpha\ttau\wh\cH$ all the exponents $\nu_k$ of the powers of $\tau$ accompanying the $f_kQ_k$ satisfy that $\nu_k\geq-\alpha+k-n\alpha_{k+1}-\gamma$. Therefore, we can define the steps of the filtration in an alternative way, as the free $\cO_{\ttau\cX}$-modules of finite rank
$$\ttau U_\alpha\ttau\wh\cH=\bigoplus_{k=0}^{n-1}\cO_{\ttau\cX}\cdot\tau^{\nu_{\alpha}(k)}Q_k,$$
where $\nu_\alpha(k)=\lceil-\alpha+k-\gamma-n\alpha_{k+1}\rceil$.

With this expression it is easy to see that $\ttau U_\alpha\ttau\wh\cH/(\tau-z)\ttau U_\alpha\ttau\wh\cH$ is the $z$-graded free $\cO_{\cX}$-module $\bigoplus_k\cO_{\cX}z^{\nu_\alpha(k)}\bar{Q}_k$, where
$$\bar{Q}_k=(-n)^k\prod_{j=1}^k(t\dd_t-\alpha_j),$$
and that the graded pieces $\text{Gr}_\alpha^{\ttau U}\ttau\wh\cH$ are
$$\text{Gr}_\alpha^{\ttau U}\ttau\wh\cH=\bigoplus_{k=0}^{n-1}\cO_{\cX}\cdot\tau^{\nu_{\alpha}(k)}Q_k,$$
which are strict $\cR_{\cX}$-modules.
\end{rem}

Recall that as in the case of $\cD$-modules, we have a notion of strict specializability and $V$-filtration for $\cRint$-modules. In the setting under consideration we recall that we will use $\ttau V$ to denote the canonical $V$-filtration of a $\cRint_{\ttau\cX}$-module. We recall as well the reference \cite[\S 2.1.2]{Mo13} for more details.

\begin{prop}\label{prop:tauVfiltr}
Assume the $\alpha_i$ lie in the interval $[0,1)$, increasingly ordered. Then, $\ttau\wh\cH$ is strictly $\RR$-specializable along $\ttau \cX_0$ and its $\ttau V$-filtration is in fact given by the $\ttau U_\alpha\ttau\wh\cH$.
\end{prop}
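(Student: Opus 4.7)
The plan is to verify directly that $\ttau U_\bullet\ttau\wh\cH$ satisfies the defining properties of the canonical $V$-filtration along $\ttau\cX_0$ for a strictly $\RR$-specializable $\cRint_{\ttau\cX}$-module (see \cite[\S~2.1.2]{Mo13}), and then conclude by uniqueness of that filtration.

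Setting $a_k := k - \gamma - n\alpha_{k+1}$, so that $\nu_\alpha(k) = \lceil -\alpha + a_k\rceil$, a direct computation from Lemma \ref{lem:ConnHypResc} gives, for $0 \leq k \leq n-2$,
\[
z\tau\partial_\tau\bigl(\tau^{\nu_\alpha(k)} Q_k\bigr) = z\bigl(\nu_\alpha(k) - a_k\bigr)\, \tau^{\nu_\alpha(k)} Q_k - \tau^{\nu_\alpha(k)+1} Q_{k+1},
\]
\[
zt\partial_t\bigl(\tau^{\nu_\alpha(k)} Q_k\bigr) = z\alpha_{k+1}\, \tau^{\nu_\alpha(k)} Q_k - \tfrac{1}{n}\,\tau^{\nu_\alpha(k)+1} Q_{k+1},
\]
together with analogous formulas for $k = n-1$ involving the wrap-around term $(-n)^n t\, Q_0$. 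The ordering of the $\alpha_i$ gives $a_{k+1} - a_k = 1 - n(\alpha_{k+2} - \alpha_{k+1}) \leq 1$, and the hypothesis $\alpha_i \in [0, 1)$ yields $a_0 - a_{n-1} - 1 = n(\alpha_n - \alpha_1 - 1) < 0$. Together these imply that the off-diagonal and wrap-around terms remain inside $\ttau U_\alpha\ttau\wh\cH$, so that $\ttau U_\alpha\ttau\wh\cH$ is stable under $V_0\cRint_{\ttau\cX}$; combined with Remark \ref{rem:tauVfilt}, it is $V_0\cRint_{\ttau\cX}$-coherent.

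The compatibility $\tau \cdot \ttau U_\alpha\ttau\wh\cH = \ttau U_{\alpha-1}\ttau\wh\cH$ and the strictness of $\Gr_\alpha^{\ttau U}\ttau\wh\cH$ (freeness over $\cO_{\cX}$) are already recorded in Remark \ref{rem:tauVfilt}. The remaining and main point is nilpotency of $z\tau\partial_\tau + \alpha z$ on $\Gr_\alpha^{\ttau U}\ttau\wh\cH$. The class of $\tau^{\nu_\alpha(k)} Q_k$ in this graded piece is nonzero exactly when $a_k \equiv \alpha \pmod{\ZZ}$, and then $\nu_\alpha(k) = -\alpha + a_k$, so that the diagonal coefficient $z(\nu_\alpha(k) - a_k) = -\alpha z$ cancels the shift $+\alpha z$. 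The off-diagonal term $-\tau^{\nu_\alpha(k)+1} Q_{k+1}$ has weight $\alpha - n(\alpha_{k+2} - \alpha_{k+1}) \leq \alpha$, with strict inequality (and therefore vanishing class in $\Gr_\alpha^{\ttau U}$) unless $\alpha_{k+1} = \alpha_{k+2}$; for $k = n-1$ the wrap-around term has weight strictly less than $\alpha$ by $\alpha_n - \alpha_1 - 1 < 0$. Ordering the contributing indices increasingly, the operator $z\tau\partial_\tau + \alpha z$ then acts on $\Gr_\alpha^{\ttau U}\ttau\wh\cH$ by a shift matrix (nonzero entries only on the super-diagonal), hence is nilpotent.

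The main obstacle is to ensure that the wrap-around piece at $k = n-1$ does not produce a Jordan block closing back on $Q_0$; this is precisely where the assumption $\alpha_i \in [0,1)$ enters, through the strict inequality $\alpha_n < 1 + \alpha_1$. Once all of this is verified, the uniqueness of the canonical $V$-filtration gives the identification $\ttau V_\bullet\ttau\wh\cH = \ttau U_\bullet\ttau\wh\cH$, and strict $\RR$-specializability of $\ttau\wh\cH$ along $\ttau\cX_0$ follows.
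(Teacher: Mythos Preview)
Your proof is correct and follows essentially the same route as the paper's: both verify Mochizuki's $V$-filtration axioms directly by computing the action of $z\tau\partial_\tau$ on the basis $\tau^{\nu_\alpha(k)}Q_k$ via Lemma~\ref{lem:ConnHypResc}, checking that the off-diagonal (and wrap-around) terms land in the correct filtration step using the ordering of the $\alpha_i$, and then establishing nilpotency of $z\tau\partial_\tau + \alpha z$ on the graded pieces. The only cosmetic difference is that the paper phrases the nilpotency check through the $(\mathfrak{p},\mathfrak{e})$ formalism of \cite[\S~1.3.a]{Sa15}, explicitly ruling out eigenvalues with nonzero $\omega$-component to conclude strict $\RR$-specializability, whereas you obtain the same conclusion implicitly by exhibiting the single eigenvalue $-\alpha z$.
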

\begin{proof}
First of all we will see that $\ttau U_\alpha\ttau\wh\cH$ is the $\ttau V$-filtration of $\ttau\wh\cH$, following \cite[\S\S 2.1.2.1, 2.1.2.2]{Mo13}. Apart from what we already shown at the remark above, what remains then is showing conditions iii' and v of [ibid., \S 2.1.2] and prove that the $\ttau U_\alpha\ttau\wh\cH$ are coherent $V_0\cR_{\cX}$-modules. We will start by the second condition. We will consider then the mappings $\fp,\fe$ given by
$$\begin{array}{rcl}
(\fp,\fe):\RR\times\CC&\longrightarrow&\RR\times\CC\\
(\beta,\omega)&\longmapsto&(\beta+2\Re(z\bar\omega),-\beta z+\omega-\bar{\omega}z^2)\end{array}.$$
We ought to see now that the operator $z\tau\dd_\tau-\fe(\beta,\omega)$ is nilpotent on the graded pieces $\Gr_\alpha^{\ttau U}\ttau\wh\cH$ only for a finite amount of $(\beta,\omega)\in\cK:=\{\beta+2\Re(z_0\bar\omega)=\alpha\}$, for any value $z_0$ of $z$. Moreover, $\ttau\wh\cH$ will be strictly $\RR$-specializable if those $(\beta,\omega)$ belong in fact to $\RR\times\{0\}$ (cf. \cite[\S 1.3.a]{Sa15}).

Take then $(\beta,\omega)\in\cK$ and $f\tau^\nu Q_k\in\ttau U_\alpha\ttau\wh\cH$, with $f\in\cO_{\ttau\cX}$. We must have that $k-n\alpha_{k+1}-\gamma-\nu\leq\alpha$. Assume that $k<n-1$. Thanks to Lemma \ref{lem:ConnHypResc} we know that
$$(z\tau\dd_\tau-\fe(\beta,\omega))f\tau^\nu Q_k=\big(z\tau\dd_\tau+(\nu+n\alpha_{k+1}+\gamma-k+\beta)z-\omega+\bar{\omega}z^2\big)(f)\tau^\nu Q_k-\tau^{\nu+1}Q_{k+1}.$$
Recall that the $\alpha_i$ are increasingly ordered. Thus $f\tau^{\nu+1}Q_{k+1}$ lives in $\ttau U_\alpha\ttau\wh\cH$, for
$$k+1-n\alpha_{k+2}-\gamma-\nu-1\leq\left((k+1)-n\alpha_{k+2}-\gamma)- (k-n\alpha_{k+1}-\gamma)\right)-1+\alpha\leq\alpha.$$
Now we should look at what happens to the class of $f\tau^{\nu+1}Q_{k+1}$ in the $\alpha$-graded piece of $\ttau\wh\cH$.

Note that $\left[f\tau^\nu Q_k\right]\neq0$ if and only if $\nu+n\alpha_{k+1}+\gamma-k+\alpha=0$, so
$$(z\tau\dd_\tau-\fe(\beta,\omega))f\tau^\nu Q_k=\big(z\tau\dd_\tau+(\beta-\alpha)z-\omega+\bar{\omega}z^2\big)(f)\tau^\nu Q_k-\tau^{\nu+1}Q_{k+1}=$$
$$=\big(z\tau\dd_\tau-2\Re(z_0\bar\omega)z-\omega+\bar{\omega}z^2\big)(f)\tau^\nu Q_k-\tau^{\nu+1}Q_{k+1}.$$

Now notice that $\tau$ divides $\tau\dd_\tau(f)$, so in fact $z\tau\dd_\tau(f)\tau^\nu Q_k\in\ttau U_{\alpha-1}\ttau\wh\cH$ and then we can further reduce our expression to
$$(z\tau\dd_\tau-\fe(\beta,\omega))f\tau^\nu Q_k=(-\omega-2\Re(z_0\bar\omega)z+\bar{\omega}z^2)f\tau^\nu Q_k-\tau^{\nu+1}Q_{k+1}.$$

On the other hand, $\tau^{\nu+1}Q_{k+1}$ does not vanish either in $\Gr_\alpha^{\ttau U} \ttau\wh\cH$ if and only if $\alpha_{k+2}=\alpha_{k+1}$. Indeed, we know that $\nu+n\alpha_{k+1}+\gamma-k+\alpha=0$, so doing the same as before, $k+1-n\alpha_{k+2}-\gamma-\nu-1=\alpha+n(\alpha_{k+2}-\alpha_{k+1})$ and the claim follows. Furthermore, in order to $(z\tau\dd_\tau-\fe(\beta,\omega))$ to vanish, we should impose that $\omega=0$, just by looking at the coefficients of the powers of $z$ in the expression for $f$.

Now if $k=n-1$, then everything would be the same as before except $-\tau^{\nu+1}Q_{k+1}$, which becomes $-(-n)^nt\tau^{\nu+1}$, whose class vanishes obviously in the graded piece under consideration.

In conclusion, $(z\tau\dd_\tau-\fe(\beta,\omega))^lf\tau^\nu Q_k$ can only vanish in $\Gr_\alpha^{\ttau U}\ttau\wh\cH$ if $\alpha=\beta$ (and then $\omega=0$), and does not do so until we get to an index $k+l$ such that $\alpha_{k+l}$ is strictly bigger than $\alpha_k$. Since there is a finite set of indexes, $(z\tau\dd_\tau+\alpha z)$ is nilpotent, of nilpotency index $n$ at most.

Condition iii' in \cite[\S 2.1.2.2]{Mo13} is equivalent to $z\tau\dd_\tau\ttau U_\alpha\ttau\wh\cH\subseteq\ttau U_\alpha\ttau\wh\cH$, using that $\ttau U_\alpha\ttau\wh\cH=\tau\ttau U_{\alpha+1}\ttau\wh\cH$, and such claim is a consequence from an argument very similar to the proof of condition v above. Finally, since $V_0\cR_{\cX}=\cO_{\ttau\cX}\langle z\dd_t,z\tau\dd_\tau\rangle$, it is clear from the computations above and the alternative expression for the filtration steps in Remark \ref{rem:tauVfilt} that they are cyclic $V_0\cR_{\cX}$-modules, and then coherent.

Summing up and noting that all the calculation was in fact independent of $z_0$, $\ttau\wh\cH$ is strictly $\RR$-specializable along $\ttau \cX_0$.
\end{proof}

Finally, we are able to state and prove our main result.
\begin{thm}\label{thm:HodgeData}
Let as before $\alpha_1,\ldots,\alpha_n$ be real numbers in $[0,1)$, increasingly ordered, and put $\cH=\cH(\alpha_i,\emptyset)$. For each $k=1,\ldots,n$, set $\rho(k)=-n\alpha_k+k$. Then the jumping numbers of the irregular Hodge filtration of $\cH$ are, up to an overall real shift, the numbers $\rho(k)$. The irregular Hodge numbers are the multiplicities of those jumping numbers, or in other words, the nonzero values of $|\rho^{-1}(x)|$, for $x$ real.

Moreover, recall that for all $r=0,\ldots,n-1$, we had $\nu_\alpha(r)=\lceil -\alpha+r-\gamma-n\alpha_{r+1}\rceil$, and that the operators $\bar{Q}_r$ were defined as
$$\bar{Q}_r=(-n)^r\prod_{i=1}^r(t\dd_t-\alpha_i).$$
Then, the irregular Hodge filtration $\Firr_\bullet\cH$ is given by
$$\Firr_{\alpha+j}\cH=\bigoplus_{k:j\geq\nu_\alpha(k)}\cO_X\bar{Q}_k.$$
\end{thm}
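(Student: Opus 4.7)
The plan is to extract the irregular Hodge filtration directly from the explicit description of the $\ttau V$-filtration of $\ttau\wh\cH$ obtained in Proposition \ref{prop:tauVfiltr}, combined with Sabbah's general construction in \cite[Def.~2.22]{Sa15}. By Theorem \ref{thm:HypIrrMHM} we already know that $\wh\cH$ lies in $\IrrMHM(\gmt)$, so a canonical irregular Hodge filtration on $\cH$ exists and the task is to unwind the definition in our explicit model.

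The key step is to identify the irregular Hodge filtration with the $z$-graded pieces of the restriction of the $\ttau V$-filtration to the slice $\tau=z$. More precisely, following the general principle recalled in the introduction and made precise in \cite[\S 2.3]{Sa15}, one has the equality
$$\ttau V_\alpha\ttau\wh\cH\big/(\tau-z)\ttau V_\alpha\ttau\wh\cH=\bigoplus_{j\in\ZZ}\Firr_{\alpha+j}\cH\cdot z^j$$
of $\cO_\cX$-submodules of $\wh\cH\otimes_{\cO_\cX}\cO_\cX[z^{-1}]$; away from $z=0$ both sides recover the localized module, and the filtration $\Firr_\bullet\cH$ is read off by extracting the coefficients of powers of $z$ on the right. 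By Proposition \ref{prop:tauVfiltr} and the alternative description of $\ttau U_\alpha\ttau\wh\cH$ given in Remark \ref{rem:tauVfilt}, the left hand side equals $\bigoplus_{k=0}^{n-1}\cO_\cX\,z^{\nu_\alpha(k)}\bar Q_k$. Expanding each summand as $\bigoplus_{j\geq\nu_\alpha(k)}\cO_X\,z^j\bar Q_k$ and collecting the coefficient of $z^j$ yields the asserted identity $\Firr_{\alpha+j}\cH=\bigoplus_{k:\,j\geq\nu_\alpha(k)}\cO_X\bar Q_k$.

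With the filtration in hand, the statement on jumping indices is pure bookkeeping. Taking $\alpha=\beta$ and $j=0$ in the above formula, $\bar Q_k$ belongs to $\Firr_\beta\cH$ exactly when $\beta\geq k-\gamma-n\alpha_{k+1}$, so this real number is the index at which $\bar Q_k$ first enters the filtration; reindexing $k\mapsto k-1$ (so that $k$ now runs from $1$ to $n$) and using $\rho(k)=k-n\alpha_k$, the jumping indices become $\{\rho(k)-1-\gamma:k=1,\ldots,n\}$, which is $\{\rho(k)\}_{k=1}^{n}$ up to the overall real shift by $-1-\gamma$, and the irregular Hodge numbers are the multiplicities $|\rho^{-1}(x)|$. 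The main subtlety, and essentially the only nontrivial point, is justifying the displayed identification of the irregular Hodge filtration with the $z$-graded pieces of $\ttau V_\alpha\ttau\wh\cH|_{\tau=z}$; this rests on the integrability and well-rescalability of $\wh\cH$ supplied by Theorem \ref{thm:HypIrrMHM}, on the strict $\RR$-specializability proved in Proposition \ref{prop:tauVfiltr}, and on the identification $i_{\tau=z}^{\,*}\ttheta\wh\cH\cong\pi^{0,+}\cH$ from Remark \ref{rem:chi}.
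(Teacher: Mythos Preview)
Your argument is correct and follows essentially the same route as the paper's own proof: invoke Theorem \ref{thm:HypIrrMHM} to ensure well-rescalability, use Proposition \ref{prop:tauVfiltr} and Remark \ref{rem:tauVfilt} to write $i_{\tau=z}^*\ttau V_\alpha\ttau\wh\cH=\bigoplus_k\cO_\cX z^{\nu_\alpha(k)}\bar Q_k$, and then read off $\Firr_{\alpha+\bullet}\cH$ from the $z$-grading via \cite[Def.~2.22]{Sa15}. The only cosmetic difference is that the paper phrases the last step as taking the graded object of the induced $z$-adic filtration rather than directly equating with the Rees module, but this is the same computation.
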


\begin{proof}
Since we know that $\wh\cH$ underlies an object in $\IrrMHM(\GG_{m,t})$  by Theorem \ref{thm:HypIrrMHM},
we conclude by \cite[Def. 2.52]{Sa15} that $\wh\cH$ is well-rescalable (as defined in [ibid., Def. 2.19]) and so we apply [ibid., Def. 2.22]. From Remark \ref{rem:tauVfilt}, we have
$$i_{\tau=z}^*\ttau V_\alpha\ttau\wh\cH=\ttau V_\alpha\ttau\wh\cH/(\tau-z)\ttau V_\alpha\ttau\wh\cH=\bigoplus_k\cO_{\cX}z^{\nu_\alpha(k)}\bar{Q}_k,$$
which is $z$-graded of finite rank.

Denote by $\pi$ the projection $\cX\ra X$. Then, the $z$-adic filtration on $\pi^*\cH[z^{-1}]$ induces a filtration on $i_{\tau=z}^*\ttau V_\alpha\ttau\wh\cH$, given by
$$F_ri_{\tau=z}^*\ttau V_\alpha\ttau\wh\cH:=\bigoplus_{s\leq r}\left(\bigoplus_{k\,:\,s\geq\nu_\alpha(k)}\cO_{X}\bar{Q}_k\right)z^s.$$
Then, $\Gr^F\left(i_{\tau=z}^*\ttau V_\alpha\ttau\wh\cH\right)$ is the Rees module associated to a new good filtration $\Firr_{\alpha+\bullet}$ on $\cH$, for some $k=0,\ldots,n-1$, which is the irregular Hodge filtration. More concretely, $\Firr_\bullet\cH$ is given by
$$\Firr_{\alpha+j}\cH=\bigoplus_{k\,:\,j\geq\nu_\alpha(k)}\cO_{X}\bar{Q}_k.$$
Therefore, its jumping numbers are $-\gamma+i-1-n\alpha_i$ for $i=1,\ldots,n$. Since the irregular Hodge filtration is defined up to an overall real shift, we can normalize the jumping numbers to $i-n\alpha_i$ and the irregular Hodge numbers will be their multiplicities.
\end{proof}

\bibliographystyle{amsalpha}
\bibliography{IrrHodgeHyper}

\end{document}